\newcommand{\scvx}{\texttt{SCvx }}
\newcommand{\tf}{N}
\newcommand{\ns}{{n_s}}
\newcommand{\nc}{{n_u}}
\newcommand{\nv}{{n_v}}
\newcommand{\nx}{{n_x}}
\newcommand{\nz}{{n_z}}
\newcommand{\vs}{{s'}}
\newcommand{\vsi}{{\vs_i}}
\newcommand{\wnorm}{}
\newcommand{\orderof}{o}
\newcommand{\dsol}{{d^*}}
\newcommand{\bvec}[1]{#1}
\newcommand{\real}{\mathbb{R}}
\newcommand{\definedas}{\coloneqq}
\newcommand{\mat}[1]{[#1]}
\newcommand{\posvec}{\bvec{p}}
\newcommand{\velvec}{\bvec{v}}
\newcommand{\ncvxeq}{\mathcal{I}_{eq}}
\newcommand{\ncvxineq}{\mathcal{I}_{ineq}}
\newcommand{\cvxineq}{\mathcal{J}_{ineq}}
\newcommand{\Nncvxeq}{e}
\newcommand{\Nncvxineq}{p}
\newcommand{\Ncvxineq}{q}
\crefname{assumption}{Assumption}{Assumptions}
\crefname{condition}{Condition}{Conditions}
\Crefname{ALC@unique}{Line}{Lines}
\crefname{problem}{Problem}{Problems}
\newcommand{\boxing}[2]{
	\capstartfalse
	\begin{figure}[#1]
		\begin{center}
			\parbox{0.95\textwidth}{
				\rule{0.95\textwidth}{0.4pt}
                \vspace{-4mm}
                #2
                \vspace{-4mm}
                \rule{0.95\textwidth}{0.4pt}
			}
		\end{center}
	\end{figure}
	\capstartfalse
}
\title{Successive Convexification: A Superlinearly Convergent 
	Algorithm for Non-convex Optimal Control Problems\thanks{Submitted to the editors \today.
	}}
\author{Yuanqi Mao\thanks{University of Washington, Seattle, WA 
		(\email{yqmao@uw.edu}, \email{mszmuk@uw.edu}, \email{xiangru@uw.edu}, \email{behcet@uw.edu}).}
	\and Michael Szmuk\footnotemark[2]
	\and Xiangru Xu\footnotemark[2]
	\and {Beh\c cet\ A\c c\i kme\c se}\footnotemark[2]}
\begin{document}

\maketitle

\begin{abstract}
  This paper presents the \scvx algorithm, a successive convexification algorithm designed to solve non-convex constrained optimal control problems with global convergence and superlinear convergence-rate guarantees. 
  The proposed algorithm can handle nonlinear dynamics and non-convex state and control constraints. It solves the original problem to optimality by successively linearizing non-convex dynamics and constraints about the solution of the previous iteration
  . The resulting convex subproblems are numerically tractable, and can be computed quickly and reliably using convex optimization solvers, making the \scvx algorithm well suited for real-time applications.
  Analysis is presented to show that the algorithm converges both globally and superlinearly, guaranteeing i) local optimality recovery: if the converged solution is feasible with respect to the original problem, then it is also a local optimum; ii) strong convergence: if the Kurdyka--{\L}ojasiewicz (KL) inequality holds at the converged solution, then the solution is unique. The superlinear rate of convergence is obtained by exploiting the structure of optimal control problems, showcasing that faster rate of convergence can be achieved by leveraging specific problem properties when compared to generic nonlinear programming methods.
  Numerical simulations are performed for a non-convex quad-rotor motion planning problem, and corresponding results obtained using Sequential Quadratic Programming (SQP) and general purpose Interior Point Method (IPM) solvers are provided for comparison. The results show that the convergence rate of the \scvx algorithm is indeed superlinear, and that \scvx outperforms the other two methods by converging in less number of iterations. 
\end{abstract}

\begin{keywords}
  optimal control, nonlinear dynamics, state constraints, control constraints, global convergence, superlinear convergence
\end{keywords}

\begin{AMS}
  49M37, 65K10, 90C30, 93B40
\end{AMS}

\section{Introduction}
The Successive Convexification (\texttt{SCvx}) algorithm was first introduced in \cite{SCvx_cdc16} to solve non-convex optimal control problems with nonlinear system dynamics. This paper extends the \scvx algorithm to include non-convex state and control constraints, further establishes the uniqueness of the converged solution if the Kurdyka--{\L}ojasiewicz (KL) inequality holds, and more importantly, proves that the algorithm converges not only globally, but also superlinearly. This algorithm places minimal requirements on the underlying dynamical system, and applies to a multitude of real-world optimal control problems, such as autonomous landing of reusable rockets \cite{lars2016autonomous}, drone path planning with obstacle avoidance \cite{szmuk2017convexification}, optimal power flow for smart grids \cite{dall2013distributed}, or more generally nonlinear Model Predictive Control (MPC) \cite{mao2019mpc}.

Methods to compute the solution to such non-convex optimal control problems have been the subject of much interest in recent years, and with the proliferation of self-driving cars, reusable rockets, and autonomous drone delivery systems, interest will only intensify. 
These methods can be broadly classified into indirect and direct methods \cite{betts1998survey}. Indirect methods use techniques from classical optimal control theory \cite{berkovitz1974optimal,pontryagin1987mathematical,liberzon2012calculus} to determine the necessary conditions of optimality, which are then solved as a two-point boundary-value-problem. However, the necessary conditions for complex problems, problems with nonlinear dynamics and non-convex state and control constraints, are very difficult to write down, let alone be solved. Direct methods, on the other hand, offer a cleaner solution process and are less affected by the problem complexity. They parameterize the state and/or control signal using a set of basis functions whose coefficients are found via parameter optimization \cite{hull1997,loxton2009optimal}. As such, we will focus on direct methods in this paper.

For the aforementioned parameter optimization problem, often finding a locally optimal solution, or even a dynamically feasible solution, quickly and reliably is preferable to finding the globally optimal solution slowly and non-deterministically. This is particularly true for real-time control systems, where safety, stability, and determinism are typically prioritized over optimality. A common way to find locally optimal solutions is through nonlinear programming, for which there exist a number of readily available off-the-shelf software packages, including the Sequential Quadratic Programming (SQP) solver \texttt{SNOPT} \cite{snopt}, the Interior Point Method (IPM) solver \texttt{IPOPT} \cite{ipopt}, and the SQP-based optimal control interface \texttt{ACADO Toolkit} \cite{acado}. However, the convergence behavior of generic nonlinear programming algorithms is often highly dependent on the initial guess provided to the solver. Furthermore, provided that convergence is achieved, generic nonlinear programming techniques offer few bounds, if any, on the computational effort required to achieve convergence. As a result, these techniques are typically not applicable for real-time autonomous applications.

Convex optimization problems, on the other hand, can be reliably solved in polynomial time to global optimality \cite{nesterov1994interior}. More importantly, recent advances have shown that these problems can be solved in real-time by both generic Second-Order Cone Programming (SOCP) solvers \cite{domahidi2013ecos}, and customized solvers that exploit the specific structure of the problem \cite{mattingley2012,dueri2014automated}. To leverage the power of convex programming in solving non-convex optimal control problems, the non-convex problems must be convexified, {\em transformed into a convex optimization problem}, which is the focus of this paper. Along this line, recent results on a technique known as \textit{lossless convexification} proved that optimal control problems with a certain class of non-convex control constraints can be posed equivalently as relaxed convex optimal control problems \cite{behcet_aut11,pointing2013}. However, lossless convexification cannot handle nonlinear dynamics, non-convex state constraints, or more general non-convex control constraints. Hence, there is a large class of optimal control problems that are not convexifiable at this time.


Sequential Convex Programming (SCP) offers a way to handle more generic nonlinear dynamics and non-convex constraints, and has been applied to quad-rotor motion planning problems \cite{augugliaro2012generation,schulman2014motion}. While SCP usually performs well in practice, no general convergence results have been reported.
%
To properly address the issue of convergence, \cite{SCvx_cdc16} proposed the first version of \scvx algorithm, which utilizes a successive convexification framework to handle nonlinear dynamics with safe-guarding features like \textit{virtual controls} and \textit{trust regions}. More importantly, \cite{SCvx_cdc16} gave a proof of global convergence for the \scvx algorithm. In this paper, we use the SCP and successive convexification synonymously, but prefer the latter name as it suggests a complimentary approach to lossless convexification.

Building on these previous results, this paper presents three main contributions. First, it develops a framework that extends the results from \cite{SCvx_cdc16} to include non-convex state and control constraints. Unlike algorithms specialized to handle specific types of non-convex constraints (for instance, some SCP-type methods \cite{liu2014solving,mao2017successive} are able to handle problems where every constraint function itself is convex, e.g. the union of convex keep-out zones), the framework in this paper is more general, and applies to a larger class of non-convex constraints. Moreover, state and control constraints are handled using the same mechanism used to handle the nonlinear dynamics, thus reducing the implementation complexity of the algorithm.

Second, the convergence result in this paper is stronger than classical numerical optimization schemes in the sense that the whole sequence converges to a single limit point rather than a set of accumulation points. Examples of the latter, i.e. weak convergence, include line-search algorithms \cite[Theorem~2.5.1]{fletcher1987practical}, trust-region methods \cite[Theorem~6.4.6]{conn2000trust} and aforementioned SCP-type methods \cite{liu2014solving,SCvx_cdc16,mao2017successive}. Strong convergence, on the other hand, is a relatively recent development. \cite{absil2005convergence} adapts {\L}ojasiewicz's theorem \cite{lojasiewicz1982trajectoires} from dynamical systems to show strong convergence of iterative numerical optimization with real-analytic cost functions. \cite{bolte2007lojasiewicz,attouch2013convergence} extends the single limit point convergence to non-smooth functions, and for optimization problems that satisfy KL inequality, \cite{attouch2013convergence} provides handy conditions to check for convergence. In this paper, we will demonstrate this type of strong convergence of the \scvx algorithm by verifying those conditions.

Third, under minimal assumptions, we prove that convergence rate of the \scvx algorithm is superlinear, markedly faster than most nonlinear programming methods (e.g. linear convergence for SQP without perfect Hessian approximation \cite[Theorem~3.3]{boggs1995sequential}). In contrast to generic nonlinear programming methods that are largely problem agnostic, our algorithm leverages the structure of optimal control problems to obtain its superlinear rate of convergence. The simulation results in this paper also validate the convergence rate claim. In practice, the \scvx algorithm has already been showcased in multiple agile quad-rotor obstacle avoidance flight demonstrations, where it was executed on an onboard embedded processor in real time \cite{szmuk2017convexification,szmuk2018real}.

The remainder of this paper is organized as follows. In \cref{sec:successive_convexification}, we outline the problem formulation and the \scvx algorithm. In \cref{sec:global}, we provide proofs of global convergence, including both weak and strong convergence. In \cref{sec:superlinear}, we show superlinear convergence rate under control related conditions. In \cref{sec:numerical_results}, we present simulation results of a non-convex quad-rotor motion planning example problem, and compare our results to those produced by SQP or IPM based methods. Lastly, in \cref{sec:conclusion} we end with concluding remarks.

\section{Successive Convexification} \label{sec:successive_convexification}

\subsection{Problem Formulation}
A continuous-time optimal control problem has to be discretized before it can be solved using optimization methods on a digital computer \cite{hull1997}. One may use, for instance, Gauss Collocation Method \cite{hager2018convergence} or Pseudospectral Method \cite{garg2010unified} to achieve that. Therefore in this paper, it is sufficient for us to just consider the discrete-time finite-horizon optimal control problem as in~\cref{prob:nonconvex}.
\boxing{h!}{
	\begin{problem} \label{prob:nonconvex}
		Non-Convex Optimal Control Problem \\
		\begin{subequations}
			\begin{equation}
			\underset{u}{\min} \quad C(x,u) \definedas \sum_{i=1}^{\tf}\phi(x_{i},u_{i}), \label{eq:original_cost}
			\end{equation}
			\textnormal{subject to:}
			\begin{align}
			&x_{i+1}=f(x_{i},u_{i})                    & i&=1,2,\ldots,\tf-1, \label{eq:dynamics} \\
			&s(x_{i},u_{i})\leq 0                      & i&=1,2,\ldots,\tf, \label{eq:state_con} \\
			&x_{i}\in X_{i} \subseteq \real^{\nx} & i&=1,2,\ldots,\tf, \label{eq:state_bound} \\
			&u_{i}\in U_{i} \subseteq \real^{\nc} & i&=1,2,\ldots,\tf-1. \label{eq:control_bound}
			\end{align}
		\end{subequations}
	\end{problem}
}

In~\cref{prob:nonconvex}, $ x_{i}$ and $u_{i}$ represent the state and control vectors at the $i^{th}$ discrete time step, $ X_{i}$ and $U_{i} $ are assumed to be convex and compact sets which include the boundary conditions, and $ \tf $ denotes the total number of time steps. For simplicity, and without loss of generality, we assume that $ \tf $ is a fixed integer (hence the final time is fixed; see \cite{szmuk2018successive} for a treatment of free-final-time problems). More concisely, these variables can be written as
\begin{align*}
x &\definedas [ x_1^T, x_2^T, \ldots, x_{\tf}^T   ]^T \,\in X \subseteq \real^{\nx \tf}, \\
u &\definedas [ u_1^T, u_2^T, \ldots, u_{\tf-1}^T ]^T \,\in U \subseteq \real^{\nc(\tf-1)},
\end{align*}
where $ X $ is the Cartesian product of $ X_i $, and $ U $ is the Cartesian product of $ U_i $.
We assume that the function $\phi : \real^\nx \times \real^\nc \rightarrow \real$ in~\cref{eq:original_cost} is convex and continuously differentiable. This is a reasonable assumption for many real-world optimal control problems. For example, the minimum-fuel problem has $ \phi(x_{i},u_{i}) = \|u_{i}\|_2 $.
The system dynamics are represented by~\cref{eq:dynamics}, where $ f : \real^{\nx}\times\real^{\nc} \rightarrow \real^{\nx} $ is assumed to be a continuously differentiable nonlinear function.
The state constraints are represented by~\cref{eq:state_con}, where $ s : \real^{\nx} \times \real^{\nc} \rightarrow \real^{\ns} $ is assumed to be a continuously differentiable non-convex function. Often, lossless convexification can be leveraged to handle the non-convex control constraints (see e.g.~\cite{behcet_aut11,pointing2013}). This should be done whenever possible.

In summary, the non-convexity of the optimal control problem stated above is due to~\cref{eq:dynamics} and~\cref{eq:state_con}.

\subsection{The \scvx Algorithm}

The basic operating principle of the \scvx algorithm involves linearizing the non-convex parts of~\cref{prob:nonconvex} about the solution of the $k^{th}$ iteration. This results in a convex subproblem that is solved to full optimality (which makes this different than standard trust-region based methods),  resulting in a new solution for the $(k+1)^{th}$ iteration. This process is repeated in succession until convergence is achieved. In practice, the \scvx algorithm is simple to initialize, as will be demonstrated in~\cref{sec:numerical_results}. In this paper, we will sometimes use the term \textit{succession} to refer to an iteration of the \scvx algorithm.

In general, the solution to the convex subproblem will not be optimal with respect to the original non-convex problem. To recover optimality, the algorithm must eventually converge to a solution that satisfies the first-order optimality conditions of~\cref{prob:nonconvex}.


To achieve this, we begin by denoting the solution to the $ k^{th} $ iteration as $ (x^k, u^k) $. At each time step $i$, let 
\begin{align*}
A_i^k &\definedas \dfrac{\partial}{\partial x_i}f(x_i,u_i)\Bigr|_{x_i^k,u_i^k}, &
B_i^k &\definedas \dfrac{\partial}{\partial u_i}f(x_i,u_i)\Bigr|_{x_i^k,u_i^k}, \\
S_i^k &\definedas \dfrac{\partial}{\partial x_i}s(x_i,u_i)\Bigr|_{x_i^k,u_i^k}, &
Q_i^k &\definedas \dfrac{\partial}{\partial u_i}s(x_i,u_i)\Bigr|_{x_i^k,u_i^k},
\end{align*}
and define $d \definedas x-x^k$, $ d_i \definedas x_i-x^k_i$, $w \definedas u-u^k$, and $w_i \definedas u_i-u^k_i$ in terms of the solution to the current iteration, $(x,u)$. At the $i^{th}$ time step, the first-order approximation of \cref{eq:dynamics} and \cref{eq:state_con} about $(x^k_i,u^k_i)$ is given by
\begin{subequations}
	\begin{align}
	&x_{i+1}^k + d_{i+1} = f(x^k_i,u^k_i) + A_i^k d_i + B_i^k w_i, \label{eq:dyn_lin} \\
	&s(x_i^k,u_i^k) + S_i^k d_i + Q_i^k w_i \leq 0, \label{eq:state_con_lin}
	\end{align}
\end{subequations}
which is a linear system with respect to the incremental state and control variables, $ d_i $ and $ w_i $, of the convex subproblem. The linearization procedure affords the benefit of convexity, but introduces two issues that obstruct the convergence process: artificial infeasibility and artificial unboundedness.

The first issue is encountered when a feasible non-convex problem is linearized about a point $(x,u)$ and it results in an infeasible convex subproblem. This undesirable phenomenon is often encountered during the early iterations of the  process, and we refer to it as \textit{artificial infeasibility}.
To mitigate the effects of artificial infeasibility resulting from the linearization of nonlinear dynamics, we augment the linearized dynamics in \cref{eq:dyn_lin} with an unconstrained \textit{virtual control} term, $ v_i \in \real^\nv $
\begin{equation}
x_{i+1}^k + d_{i+1} = f(x_i^k,u_i^k) + A_i^k d_i + B_i^k w_i + E_i^k v_i, \label{eq:dyn_pen}
\end{equation}
%
%
where $E^k_i \in \real^{\nx \times \nv}$ is selected such that $ \textnormal{im}(E^k_i) = \real^{\nx}$, and thus guaranteeing one-step controllability. Since $ v_i $ is left unconstrained, any state in the feasible region of the convex subproblem is reachable in finite time. For example,  for a mass-spring-damper system, the virtual control can be interpreted as a synthetic force  to ensure feasibility with respect to state and control constraints. Consequently, the resulting augmented convex subproblem is no longer vulnerable to artificial infeasibility arising from the linearization of~\cref{eq:dynamics}.

While the virtual control term makes any state reachable in finite time, it does not allow the problem to retain feasibility when the state and control constraints in~\cref{eq:state_con_lin} define an empty feasible set (e.g. consider the union of the state constraints $\mat{1\thinspace,\thinspace 0,\ldots,\thinspace 0}x \geq \epsilon$ and $\mat{1\thinspace,\thinspace 0,\ldots,\thinspace 0}x \leq -\epsilon$, for $\epsilon > 0$).

To mitigate artificial infeasibility caused by the linearization of non-convex state and control constraints, we introduce an unconstrained \textit{virtual buffer zone} term, $ \vsi \in \real^{\ns}_{+} $ (meaning $ \vsi \geq 0 $), to \cref{eq:state_con}:
\begin{equation}
s(x_i^k,u_i^k) + S_i^k d_i + Q_i^k w_i - \vsi \leq 0. \label{eq:state_con_pen}
\end{equation}
$\vsi$ can be understood as a relaxation term that allows the non-convex state and control constraints in~\cref{eq:state_con_lin} to be violated. In an obstacle avoidance example, $\vsi$ can be interpreted as a measure of the obstacle constraint violation necessary to retain feasibility at the $i^{th}$ time step.


To ensure that $v_i$ and $\vsi$ are used only when necessary, we define
\begin{align*}
v &\definedas [ v_1^T, v_2^T, \ldots, v_{\tf-1}^T ]^T \in \real^{\nv(\tf-1)} \\
\vs &\definedas [ \vs_1^T, \vs_2^T, \ldots, \vs_{\tf-1}^T ]^T \in \real^{\ns(\tf-1)}_{+},
\end{align*}
and augment the linear cost function with the term $\sum_{i=1}^{\tf-1}{\lambda_i P(E_i v_i,\vsi)},$ where the $ \lambda_i$'s are sufficiently large positive penalty weights, and $ P : \real^\nx \times \real^\ns \rightarrow \real $ is an exact penalty function (which will be more precisely defined later in \cref{eq:sub_cost}). Thus, to obtain the solution for the $ (k+1)^{th} $ iteration, we use the penalized cost given by
\begin{equation}
L^k(d,w) \definedas C(x^k+d,u^k+w) + \sum_{i=1}^{\tf-1}{\lambda_i P(E_i^k v_i,\vsi)}. \label{cost:linear}
\end{equation}
Note that we omitted the dependence of $ v_i $ and $ \vsi $ in $ L^k $ for simplicity. Its corresponding nonlinear penalized cost is given by
\begin{equation}
J(x,u) \definedas C(x,u) +\sum_{i=1}^{\tf-1}{\lambda_i P\big( x_{i+1}-f(x_i,u_i), s(x_i,u_i) \big)}. \label{cost:penalty}
\end{equation}
%
%
%
The second adverse effect of linearization is \textit{artificial unboundedness}. This phenomenon occurs when the local properties of the non-convex problems are extrapolated well beyond the neighborhood of the linearization point. For example, consider the following simple non-convex optimization problem
\begin{equation*}
\min \thinspace\thinspace x_2 \thinspace, \quad s.t. \quad x_2 = x_1^2,
\end{equation*}
whose solution is $(x_1^*,x_2^*) = (0,0)$. Linearizing this non-convex problem about any point other than $(x_1^*,x_2^*)$ renders the linearized problem unbounded.

To mitigate the risk of artificial unboundedness, we impose the following trust region constraint to ensure that $u$ does not deviate significantly from the control input obtained in the previous iteration, $u^k$:
$$ \|w\|\wnorm \leq r^k. $$
By selecting $r^k$ appropriately, this constraint, in conjunction with~\cref{eq:dyn_pen}, ensures that $x$ remains sufficiently close to the state vector obtained in the previous iteration, $x^k$, thus keeping the solution within the region where the linearization is accurate.

We now present the final problem formulation and a summary of the \scvx algorithm. At the $(k+1)^{th}$ iteration, we solve the convex optimal control subproblem,~\cref{prob:lin}. For many applications, $ U $ and $ X $ are simple convex sets, e.g. second order cones, in which case \cref{prob:lin} can be readily solved by SOCP solvers (e.g. \cite{dueri2014automated}) in a matter of milliseconds.
\boxing{h}{
	\begin{problem} \label{prob:lin}
		Convex Optimal Control Subproblem \\
		$$ \underset{d, w}{\min}\thinspace\thinspace L^k(d,w), $$
		$\textnormal{subject to:}$ \\
		$$ u^k+w \in U,\quad x^k+d \in X,\quad \|w\|\wnorm \leq r^k. $$ \\
	\end{problem}
}

The \scvx algorithm solves~\cref{prob:nonconvex} according to the steps outlined in \cref{algo:SCvx}. It can be considered as a trust-region-type algorithm, and follows standard trust region radius update rules. However, it also differs from conventional trust region methods in some important ways, as discussed later in \cref{sec:comparison}.

\begin{algorithm} [h]
	\caption{The \scvx Algorithm}
	\label{algo:SCvx}
	\begin{algorithmic}[1]
		\Procedure{\texttt{SCvx}}{$x^{1},u^{1},\lambda,\epsilon_{tol}$}
		\vspace{+2mm}
		\State\textbf{input} Select initial state $ x^{1} \in X $ and control $ u^{1}\in U $. Initialize trust region radius $ r^{1} > 0 $. Select penalty weight $ \lambda > 0 $, and parameters $ 0<\rho_0<\rho_1<\rho_2<1 $, $ r_l>0 $ and $ \alpha>1, \beta>1 $. \label{line:input}
		\While {not converged, i.e. $ \Delta J^k > \epsilon_{tol} $}
		\State \textbf{step 1} At $ (k+1)^{th} $ succession, solve \cref{prob:lin} at $ (x^k,u^k,r^k) $ to get an optimal solution $ (d, w) $.
		\State \textbf{step 2} Compute the \emph{actual} change in the penalty cost \cref{cost:penalty}:
		\begin{equation} \label{eq:actual}
		\Delta J^k=J(x^k,u^k)-J(x^k+d,u^k+w),
		\end{equation}
		and the \emph{predicted} change by the convex cost \cref{cost:linear}:
		\begin{equation} \label{eq:predict}
		\Delta L^k=J(x^k,u^k)-L^k(d,w).
		\end{equation}
		\If {$ \Delta J^k=0 $} \State \textbf{stop}, and \underline{\textbf{return} $ (x^k,u^k) $};
		\Else \State compute the ratio
		\begin{equation} \label{eq:ratio}
		\rho^k \definedas \Delta J^k/\Delta L^k.
		\end{equation}
		\EndIf
		\State \textbf{step 3}
		\If {$ \rho^k < \rho_0 $} \State reject this step, contract the trust region radius, i.e. $ r^k\leftarrow r^k/\alpha $ and go back to \textbf{step 1}; \label{line:reject}
		\Else \State accept this step, i.e. $ x^{k+1}\leftarrow x^k+d $, $ u^{k+1}\leftarrow u^k+w $, and update the trust region radius $ r^{k+1} $ by
		\label{line:accept}
		\begin{equation*}
		r^{k+1}=\begin{cases}
		r^k/\alpha, & \text{if }  \rho^k<\rho_1;\\
		r^k, & \text{if } \rho_1\leq \rho^k<\rho_2;\\
		\beta r^k, & \text{if } \rho_2\leq \rho^k.
		\end{cases}
		\end{equation*}
		\EndIf
		\State  $ r^{k+1}\leftarrow \max\{r^{k+1}, r_l \} $, $ k\leftarrow k+1 $, and go back to \textbf{step 1}.
		\EndWhile
		\State \underline{\textbf{return} $ (x^{k+1},u^{k+1}) $}.
		\vspace{+2mm}
		\EndProcedure
	\end{algorithmic}
\end{algorithm}

In line \ref{line:input}, we initialize the algorithm by using somewhat random initial state $ x^1 $ and control $ u^1 $, which need \textbf{not} to be dynamically feasible. This is important because finding a feasible solution itself is a difficult non-convex problem, and \scvx fortunately does not require that. There are no strict rules to following in choosing parameters. Generally speaking, $ \rho_0 $ is very close to 0, $ \rho_1 $ is slightly greater than 0, and $ \rho_2 $ can be marginally less than 1. $ r_l $ can be slightly greater than 0.

The quality of the linear approximation used in~\cref{prob:lin} can be understood by inspecting the ratio $\rho^k$ in \cref{eq:ratio}, which compares the realized (nonlinear) cost reduction $\Delta J^k$ in \cref{eq:actual}, to the predicted (linear) cost reduction $\Delta L^k$ in \cref{eq:predict} based on the previous (i.e. $k^{th}$) iteration.
In line \ref{line:reject}, when $\rho_k$ is small (i.e. when $\rho^k < \rho_0 \ll 1$), the approximation is considered inaccurate, since the linear cost reduction over-predicts the realized nonlinear cost reduction. Consequently, the solution $(d,w)$ is rejected, the trust region is contracted by a factor of $\alpha < 1$, and the step is repeated. As will be shown in~\cref{sec:global}, the contraction of the trust region ensures that this condition can occur only a finite number of times, thus guaranteeing that the algorithm will not remain in this state indefinitely.

In line \ref{line:accept}, if $\rho^k$ is such that the linearization accuracy is deficient, yet acceptable (i.e. when $\rho_0 \leq \rho^k < \rho_1$), then the solution $(d,w)$ is accepted, but the trust region is contracted. The former is done to avoid unnecessarily discarding the solution that has already been computed, while the latter is done to improve the linearization accuracy of the next succession.

When $\rho^k$ is sufficiently large, yet significantly less than unity (i.e. when $\rho_1 \leq \rho^k < \rho_2$), the linearization is deemed sufficiently accurate. Consequently, the solution $(d,w)$ is accepted, and the trust region size is retained.

Lastly, when $\rho^k$ is close to unity, the linear cost reduction accurately predicts the realized nonlinear cost reduction. Moreover, if $\rho^k$ is greater than unity, then the linear approximation under-predicts the cost reduction, and is thus conservative. These conditions indicate that the linearization is accurate or conservative enough to enlarge the trust region. Hence, when $\rho_k \geq \rho_2$, the solution $(d,w)$ is accepted, and the trust region size is increased by a factor of $\beta > 1$ to allow for larger $w$, and therefore $d$, in the next succession.


\subsection{Comparison with Standard Trust-Region and SQP Methods} \label{sec:comparison}
One important distinction between \scvx and typical trust-region-type algorithms lies in the subproblem solved at each iteration. Conventional trust-region algorithms usually perform a line search along the Cauchy arc to achieve a sufficient reduction \cite{conn2000trust}. However, in the \scvx algorithm, each convex subproblem is solved to full optimality, thus increasing the number of inner solver iterations (e.g. IPM iterations) at each succession, while decreasing the number of outer \scvx iterations. Qualitatively speaking, the number of successions is reduced by achieving a greater cost reduction at each succession. Thanks to the capabilities of existing IPM algorithms, and due to recent advancements in IPM customization techniques (e.g. \cite{domahidi2013ecos,dueri2014automated}), each convex subproblem can be solved quickly enough to outweigh the negative impacts of solving it to full optimality.

It is also crucial to point out some key differences between the \scvx algorithm and SQP-based methods (e.g. \cite{boggs1995sequential}). SQP-based methods typically make use of second-order information when approximating the Hessian of the cost function (and in some cases, of the constraints). This requires techniques like the Broyden-Fletcher-Goldfarb-Shanno (BFGS) update, which can be computationally expensive. Furthermore, additional conditions must be satisfied in order to ensure that the computed Hessian is positive-definite, and therefore, to guarantee that the resulting subproblem is convex \cite{fletcher1987practical}. For these two reasons, SQP-based methods are not well suited for real-time autonomous applications. On the other hand, the \scvx algorithm relies only on first-order information obtained through linearization. While the first-order approximation may be less accurate than its second-order counterpart, the resulting error is properly regulated by the trust region updating mechanism outlined in~\cref{algo:SCvx}. Furthermore, since the Jacobian matrices can be determined analytically, very little computational effort is expended in setting up each succession. Most importantly, as a consequence of linearization, the resulting subproblems are automatically \textit{guaranteed} to be convex, thus further ensuring the robustness of the convergence process.

\section{Global Convergence} \label{sec:global} This section presents two main convergence results. The first one, weak convergence, means that the algorithm generates a set of convergent subsequences, all of which satisfy the first order conditions. This is not convergence in its strict sense due to potential oscillation between several limit points (i.e., accumulation points), but suprisingly most of the convergence claims of nonlinear optimization schemes fall into this category (see \cite{absil2005convergence} and the references therein). It is also relatively easy to prove due to the well-known Bolzano-Weierstrass theorem (see e.g. \cite{rudin1964principles}). In contrast, strong convergence claims the uniqueness of the limit point, which is a much harder argument to make. Fortunately, for certain problem classes, e.g. problems with semi-algebraic functions, \cite{attouch2013convergence} provides conditions that can be checked in advance to attest strong convergence. By verifying those conditions, we will show that, under mild assumptions, the \scvx algorithm is indeed strongly convergent.

\subsection{Weak Convergence}
In this section, we extend the convergence result from \cite{SCvx_cdc16} to facilitate the addition of state and control constraints. Since the state and control variables become indistinguishable once the optimal control problem is implemented as a numerical parameter optimization problem, we perform the following analysis accordingly. In lieu of the state and control variables, $x$ and $u$, we will use $ z \definedas [ x^T, \, u^T ]^T $ as our optimization variable, where $ z \in \real^\nz $ and where $ \nz = \nx \tf + \nc(\tf-1) $. Consequently, the constraints will be rewritten as a set of inequalities expressed in terms of $ z $.
Thus, we have the finite-dimensional non-convex optimization problem in \cref{prob:scvx_original}.

\boxing{!h}{
	\begin{problem} \label{prob:scvx_original}
		Original Non-Convex Problem
		\begin{subequations}
			\begin{align}
			&\underset{z}{\min}       & g_0(z)&,          &  & \label{eq:o_cost} \\
			&\textnormal{subject to:} & g_{i}(z)&=0,      & \forall i&\in\ncvxeq, \label{eq:o_ncvxeq} \\
			&                         & g_{i}(z)&\leq 0,  & \forall i&\in\ncvxineq, \label{eq:o_ncvxineq} \\
			&                         & h_{j}(z)&\leq 0,  & \forall j&\in\cvxineq, \label{eq:o_cvxineq}
			\end{align}
		\end{subequations}
	\end{problem}
}

In \cref{prob:scvx_original}, $ \ncvxeq \definedas \{1,2,\ldots,\Nncvxeq\} $ represents the set of non-convex equality constraint indices, $\ncvxineq \definedas \{\Nncvxeq+1,\ldots,\Nncvxineq\}$, represents the set of non-convex inequality constraint indices, and $\cvxineq \definedas \{1,2,\ldots,\Ncvxineq\}$ represents the set of convex inequality indices. Correspondingly, equations \cref{eq:o_ncvxeq}-\cref{eq:o_cvxineq} represent the nonlinear system dynamics, the non-convex state and control constraints, and the convex state and control constraints, respectively. We assume that $g_i(z)$ and $h_j(z)$ are continuously differentiable for all $i\in \ncvxeq\,\cup\,\ncvxineq$ and $j\in\cvxineq$, respectively. To simplify the analysis that follows, we assume that $ g_0(z) \in C^1 $, but note that $g_0(z)$ can be an element of $C^0$ in practice.

Since we are restricting our analysis to discrete-time systems, $ z $ is a finite dimensional vector. Consequently, the 1-norm used in \cite{SCvx_cdc16} manifests itself as a finite sum of absolute values. Therefore, to incorporate state and control inequality constraints, the exact penalty function $ \gamma(\cdot) $ used in \cite{SCvx_cdc16} can be extended as follows
\begin{align}
J(z) \definedas \; &g_0(z) + \sum_{i\in\ncvxeq}\lambda_i |g_{i}(z)| \notag + \sum_{i\in\ncvxineq}\lambda_i \max \big(0,g_{i}(z)\big) \\
&+ \sum_{j\in\cvxineq}\tau_j \max \big(0,h_{j}(z)\big), \label{eq:pen_cost}
\end{align}
where $ \lambda_i \geq 0$ and $\tau_j \geq 0 $ are scalars, and $ \lambda \definedas [ \lambda_1, \lambda_2, \ldots, \lambda_p ] $ and $ \tau \definedas [ \tau_1, \tau_2, \ldots, \tau_q ] $ represent the penalty weights. To facilitate subsequent proofs, we include the convex constraints, $h_j(z)$, in $ J(z) $. However, in practice, these constraints can be excluded from the cost function and included as explicit convex constraints. Next, we can now express the corresponding penalized problem in \cref{prob:scvx_penalty}.

\boxing{!h}{
	\begin{problem} \label{prob:scvx_penalty}
		Non-Convex Penalty Problem
		$$\underset{z}{\min} \;\; J(z), \quad \forall \, z \in \real^\nz.$$
	\end{problem}
}

Note that $ J(z) $ is non-convex, and thus needs to be convexified. According to the \scvx algorithm, at $ (k+1)^{th} $ succession, we linearize $ g_0(z)$, $g_{i}(z) = 0$ for all $i\in\ncvxeq\,\cup\,\ncvxineq$, and $ h_{j}(z)$ for all $j\in\cvxineq$ about $ z^{k} $. This procedure is repeated until convergence, and produces a sequence of convex penalty functions given by
\begin{align}
L^k(d) \definedas \; &g_0(z^k) + \nabla g_0(z^k)^T d + \sum_{i\in\ncvxeq}\lambda_i |g_{i}(z^k) + \nabla g_i(z^k)^T d| \notag \\
&+ \sum_{i\in\ncvxineq}\lambda_i \max \big( 0,g_{i}(z^k)+\nabla g_i(z^k)^T d \big) \notag \\
&+ \sum_{j\in\cvxineq}\tau_j \max \big( 0,h_{j}(z^k) +\nabla h_j(z^k)^T d \big). \label{eq:sub_cost}
\end{align}
Note that $ L^k(d) $ is \textbf{not} exactly linearized $ J(z) $, since the linearization is applied inside the $ |\cdot| $ and $ \max(0, \cdot) $ functions. $ L^k(d) $ is then incorporated into the linearized penalty problem in \cref{prob:scvx_sub}.

\boxing{!h}{
	\begin{problem} \label{prob:scvx_sub}
		Linearized Penalty Problem
		\begin{align*}
		\qquad &\underset{d}{\min} && L^k(d) \qquad & \\
		\qquad &\textnormal{subject to:} && \|d\| \leq r^k. \qquad &
		\end{align*}
	\end{problem}
}

Again, we remind the reader that the convex constraints are linearized to simplify the notations in the analysis that follows, and that, in practice, they are handled explicitly (in their original form) by the convex optimization solver. The corresponding \emph{actual} cost reduction in \cref{eq:actual} can be rewritten as
\begin{equation}
\Delta J(z,d) = J(z) - J(z+d), \label{eq:DJ}
\end{equation}
while the \emph{predicted} cost reduction in \cref{eq:predict} becomes
\begin{equation}
\Delta L(d) = J(z) - L(d). \label{eq:DL}
\end{equation}
To proceed with convergence analysis, we now introduce some preliminary results and assumptions.
\begin{theorem}[\textbf{Local Optimum}, Theorem~4.1 in \cite{han1979exact}] \label{thm:local_opt}
	Let $ N(\bar{z}) $ denote an open neighborhood of $ \bar{z} $ that contains feasible point of~\cref{prob:scvx_original}. Then, if there exist $ \bar{\lambda}\geq 0$, $\bar{\tau}\geq 0 $, and $ \bar{z}\in \real^\nz $ such that $ J(\bar{z})\leq J(z) $ for all penalty weights $ \lambda\geq \bar{\lambda}$ and $\tau\geq \bar{\tau} $, and for all $ z\in N(\bar{z}) $, then $ \bar{z} $ is a local optimum of~\cref{prob:scvx_original}.
\end{theorem}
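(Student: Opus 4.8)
The plan is to exploit the exact-penalty structure of $J$ together with the fact that the hypothesis is required to hold \emph{uniformly in the penalty weights} — for every $\lambda\geq\bar{\lambda}$ and $\tau\geq\bar{\tau}$, not merely for one fixed choice. Accordingly, I would split the argument into two parts: first show that $\bar{z}$ is feasible for~\cref{prob:scvx_original}, and then show that among feasible points it locally minimizes $g_0$.

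For feasibility, I would use the feasible point $z_0\in N(\bar{z})$ whose existence is part of the hypothesis. Since $z_0$ satisfies $g_i(z_0)=0$ for $i\in\ncvxeq$, $g_i(z_0)\leq 0$ for $i\in\ncvxineq$, and $h_j(z_0)\leq 0$ for $j\in\cvxineq$, every penalty term in~\cref{eq:pen_cost} vanishes at $z_0$, so $J(z_0)=g_0(z_0)$ independently of the weights; hence the hypothesis yields $J(\bar{z})\leq g_0(z_0)$ for \emph{all} admissible weights. Now suppose, for contradiction, that $\bar{z}$ is infeasible. Then at least one of $|g_i(\bar{z})|$ for $i\in\ncvxeq$, $\max(0,g_i(\bar{z}))$ for $i\in\ncvxineq$, or $\max(0,h_j(\bar{z}))$ for $j\in\cvxineq$ is strictly positive; fix that offending index. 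Keeping every other weight at its lower bound ($\bar{\lambda}$ or $\bar{\tau}$) and driving the single weight that multiplies the offending term to $+\infty$, the nonnegativity of the remaining penalty terms forces $J(\bar{z})\to+\infty$, contradicting the weight-independent bound $J(\bar{z})\leq g_0(z_0)<\infty$. Therefore $\bar{z}$ must be feasible.

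Once feasibility of $\bar{z}$ is in hand, the same observation about vanishing penalty terms gives $J(\bar{z})=g_0(\bar{z})$, and likewise $J(z)=g_0(z)$ for every feasible $z\in N(\bar{z})$. Fixing any admissible pair of weights (for instance $\lambda=\bar{\lambda}$, $\tau=\bar{\tau}$) and invoking the hypothesis, $g_0(\bar{z})=J(\bar{z})\leq J(z)=g_0(z)$ for all feasible $z\in N(\bar{z})$. Since $N(\bar{z})$ is an open neighborhood of $\bar{z}$, this is precisely the assertion that $\bar{z}$ is a local optimum of~\cref{prob:scvx_original}.

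I expect the feasibility step to be the only place requiring care, though it is more of a subtlety than an obstacle: one must use that the inequality $J(\bar{z})\leq J(z)$ holds for \emph{all} sufficiently large weights — so that an individual weight may be sent to infinity — and must invoke the feasible comparison point $z_0\in N(\bar{z})$ to obtain a finite, weight-independent upper bound on $J(\bar{z})$. Everything else follows immediately from the definition of $J$ in~\cref{eq:pen_cost}: the penalty terms are nonnegative because $\lambda_i,\tau_j\geq 0$ and $|\cdot|,\max(0,\cdot)\geq 0$, and they vanish exactly at feasible points, where $J$ reduces to $g_0$. No smoothness of $g_0$ or the constraint functions is needed for this statement.
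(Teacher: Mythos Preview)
Your argument is correct. The paper itself does not supply a proof of this theorem; it simply cites it as Theorem~4.1 of \cite{han1979exact} and moves on, so there is no ``paper's own proof'' to compare against. Your two-step approach---first forcing feasibility of $\bar{z}$ by exploiting that the hypothesis must hold uniformly over all $\lambda\geq\bar{\lambda}$, $\tau\geq\bar{\tau}$ (so a single weight can be driven to $+\infty$ against the finite, weight-independent comparison value $J(z_0)=g_0(z_0)$), then reading off $g_0(\bar{z})\leq g_0(z)$ for feasible $z\in N(\bar{z})$ once all penalty terms collapse---is precisely the standard exact-penalty argument and is the content of Han's original proof. Your identification of the only delicate point (that one needs both the uniformity in the weights \emph{and} the feasible comparison point $z_0\in N(\bar{z})$) is also accurate.
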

%
%
\begin{assumption}[\textbf{LICQ}] \label{asup:LICQ}
	Define the following sets of indices corresponding to the active inequality constraints:
	\begin{align*}
	I_{ac}(\bar{z}) &\definedas \{\,i\;|\;g_{i}(\bar{z})=0, \; i\in\ncvxineq\} \subseteq \ncvxineq,\\
	J_{ac}(\bar{z}) &\definedas \{\,j\;|\;h_{j}(\bar{z})=0, \; j\in\cvxineq\} \subseteq \cvxineq.
	\end{align*}
	Furthermore, define $G_{eq}(\bar{z})$ as a matrix whose columns comprise of $\nabla g_i(\bar{z})$ for all $i\in\ncvxeq$, $G_{ac,g}(\bar{z})$ as a matrix whose columns comprise of $\nabla g_i(\bar{z})$ for all $i\in I_{ac}(\bar{z})$, and $G_{ac,h}(\bar{z})$ as a matrix whose columns comprise of $\nabla h_i(\bar{z})$ for all $j\in J_{ac}(\bar{z})$. Then, the Linear Independence Constraint Qualification (LICQ) is satisfied at $ \bar{z} $ if the columns of the following matrix are linearly independent:
	\begin{equation} \label{eq:G_ac}
	G_{ac}(\bar{z}) \definedas \{\,G_{eq}(\bar{z})\,;\,G_{ac,g}(\bar{z})\,;\,G_{ac,h}(\bar{z})\,\}.
	\end{equation}
\end{assumption}
The following theorem states the first-order necessary conditions (i.e. the KKT conditions) for a point $ \bar{z} $ to be a local optimum of~\cref{prob:scvx_original} (in the same sense as~\cref{thm:local_opt}).

\begin{theorem}[\textbf{Karush--Kuhn--Tucker (KKT)}, Theorem~3.2 in \cite{han1979exact}] \label{thm:kkt}
	If the constraints of~\cref{prob:scvx_original} satisfy the LICQ assumption (\cref{asup:LICQ}), and $ \bar{z} $ is a local optimum of the original problem, \cref{prob:scvx_original}, then there exist Lagrange multipliers $ \bar{\mu}_i$ for all $i\in\ncvxeq$, $ \bar{\mu}_i\geq 0$ for all $i\in\ncvxineq$, and $ \bar{\sigma}_j\geq 0$ for all $j\in\cvxineq$ such that
	\begin{equation}
	\nabla g_0(\bar{z}) + \sum_{i\in \ncvxeq}\bar{\mu}_i \nabla g_i(\bar{z}) + \sum_{i\in I_{ac}(\bar{z})}\bar{\mu}_i \nabla g_i(\bar{z}) + \sum_{j\in J_{ac}(\bar{z})}\bar{\sigma}_j \nabla h_j(\bar{z}) = 0. \label{eq:kkt}
	\end{equation}
\end{theorem}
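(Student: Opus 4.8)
\emph{Approach.} The statement is the classical KKT theorem specialized to \cref{prob:scvx_original}, and I would prove it by the standard two-step route: use local optimality of $\bar z$ to extract a first-order geometric condition on feasible directions, then dualize it with a Farkas/Motzkin transposition argument, with \cref{asup:LICQ} furnishing the constraint qualification that identifies the linearized cone with the tangent cone. Because $J(z)$ of \cref{eq:pen_cost} is an exact penalty function, there is also a shorter route through \cref{prob:scvx_penalty}; I sketch both.

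\emph{Linearized cone and the tangent-arc step.} At the feasible local optimum $\bar z$ set
\[
\mathcal D(\bar z)\definedas\bigl\{\, d\in\real^\nz : \nabla g_i(\bar z)^T d=0\ \forall i\in\ncvxeq,\ \ \nabla g_i(\bar z)^T d\le 0\ \forall i\in I_{ac}(\bar z),\ \ \nabla h_j(\bar z)^T d\le 0\ \forall j\in J_{ac}(\bar z)\,\bigr\}.
\]
The key technical point is that $\mathcal D(\bar z)$ is contained in the tangent cone of the feasible set of \cref{prob:scvx_original} at $\bar z$: given $d\in\mathcal D(\bar z)$, the linear independence of the columns of $G_{ac}(\bar z)$ in \cref{eq:G_ac} lets one apply the implicit function theorem (Lyusternik's theorem for the mixed equality/active-inequality system) to produce a $C^1$ feasible arc $z(t)$, $t\ge 0$, with $z(0)=\bar z$ and $\dot z(0)=d$.

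\emph{Dualization and the penalty shortcut.} Local optimality gives $g_0(z(t))\ge g_0(\bar z)$ for small $t>0$, hence $\nabla g_0(\bar z)^T d\ge 0$ for every $d\in\mathcal D(\bar z)$. Reading this as the implication ``$\nabla g_i(\bar z)^T d=0$ on $\ncvxeq$ and $\le 0$ on the active sets $\Rightarrow \nabla g_0(\bar z)^T d\ge 0$'', the Farkas/Motzkin transposition theorem yields $\bar\mu_i$ (free) for $i\in\ncvxeq$, $\bar\mu_i\ge 0$ for $i\in I_{ac}(\bar z)$, and $\bar\sigma_j\ge 0$ for $j\in J_{ac}(\bar z)$ with $-\nabla g_0(\bar z)=\sum_{i\in\ncvxeq}\bar\mu_i\nabla g_i(\bar z)+\sum_{i\in I_{ac}(\bar z)}\bar\mu_i\nabla g_i(\bar z)+\sum_{j\in J_{ac}(\bar z)}\bar\sigma_j\nabla h_j(\bar z)$, which is \cref{eq:kkt} after setting $\bar\mu_i=0$ for inactive $i\in\ncvxineq$ (this also encodes complementary slackness). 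Alternatively, since $\lambda,\tau$ are sufficiently large, the converse of \cref{thm:local_opt} makes $\bar z$ a local minimizer of $J$, so $0\in\partial J(\bar z)$; expanding $\partial(\lambda_i|g_i|)(\bar z)=\{\lambda_i t\,\nabla g_i(\bar z):|t|\le1\}$ when $g_i(\bar z)=0$, $\partial(\lambda_i\max(0,g_i))(\bar z)=\{\lambda_i t\,\nabla g_i(\bar z):t\in[0,1]\}$ at active inequalities and $\{0\}$ at inactive ones, and similarly for the $h_j$ terms, the inclusion $0\in\partial J(\bar z)$ is exactly \cref{eq:kkt} with $\bar\mu_i=\lambda_i t_i$ and $\bar\sigma_j=\tau_j s_j$.

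\emph{Main obstacle.} The only non-routine ingredient is the tangent-arc construction, i.e. turning LICQ into a feasible curve tangent to an arbitrary $d\in\mathcal D(\bar z)$: the pure-equality case is the implicit function theorem, but the active inequalities require a careful perturbation into the feasible interior (or an appeal to the open-mapping/Lyusternik theorem). In the penalty route the analogous subtlety is proving the exact-penalty equivalence without circularity — that a \emph{finite} threshold on $\lambda,\tau$ suffices — which is again exactly where LICQ enters, through the boundedness of the KKT multiplier set it implies. Everything else (Farkas duality, subdifferential calculus, sign bookkeeping) is routine; indeed, as the statement is quoted from \cite{han1979exact}, one could simply cite Theorem~3.2 there.
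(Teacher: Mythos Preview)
Your proposal is correct, but there is nothing to compare it against: the paper does not prove \cref{thm:kkt} at all. The theorem is stated with the attribution ``Theorem~3.2 in \cite{han1979exact}'' and simply invoked as a classical result, exactly the option you mention in your final sentence. Your tangent-cone-plus-Farkas route is the standard textbook argument and is fine as written; the LICQ $\Rightarrow$ feasible-arc step you flag as the main obstacle is indeed the only nontrivial ingredient, and Lyusternik/implicit function theorem is the right tool.

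One caution on your penalty shortcut: as you yourself note, it is circular in the direction needed here. The forward half of \cref{thm:exactness} assumes $\bar z$ is already a KKT point in order to produce the threshold on $\lambda,\tau$, so you cannot invoke it to \emph{derive} the KKT multipliers. The implication ``local optimum of \cref{prob:scvx_original} $\Rightarrow$ local minimizer of $J$'' for some finite $\lambda,\tau$ can be established directly under LICQ, but that argument is essentially the tangent-arc construction in disguise, so the shortcut does not actually save work. Treat the penalty route as a consistency check rather than an independent proof.
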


We refer to a point that satisfies the above conditions as a \textit{KKT point}.

We say a penalty function is \textit{exact} if there exists finite penalty weights $\bar{\mu}_i$ and $\bar{\sigma}_j$ such that~\cref{prob:scvx_original} and~\cref{prob:scvx_penalty} produce equivalent optimality conditions. Since~ \cref{thm:kkt} already gives the optimality condition for~\cref{prob:scvx_original}, we now examine the first-order conditions for~\cref{prob:scvx_penalty}.

For fixed $ \lambda $, the cost $ J(z) $ is not differentiable everywhere because of the non-smoothness of $ |\cdot| $ and $ \max(0,\cdot) $. However, since $ g_i(\cdot) $ and $ h_j(\cdot) $ are both continuously differentiable, $ J(z) $ is locally Lipschitz continuous. Hence we need the following definition:
\begin{definition}[\textbf{GDD}, Definition~1.3 in \cite{clarke1975generalized}] \label{def:gdd}
	If $ J(z) $ is locally Lipschitz continuous, then the Generalized Directional Derivative (GDD) of $ J(z) $ at some $ \bar{z} $ in any direction $ s $ exists, and is defined as follows:
	\begin{equation} \label{eq:gdd}
	dJ(\bar{z},s) \definedas \limsup\limits_{\mathclap{\substack{z\rightarrow \bar{z} \\ \delta\rightarrow 0^+}}} \frac{J(z+\delta s)-J(z)}{\delta}.
	\end{equation}
\end{definition}
Hence, $ dJ $ also satisfies the following implicit relationship by using similar reasoning to that used in~\cite[Lemma~14.2.5]{fletcher1987practical},
\begin{equation} \label{eq:gdd2}
dJ(\bar{z},s)=\max \{\nu^T s \,|\,\nu\in \partial J(\bar{z})\},
\end{equation}
where $ \partial J(\bar{z}) $ is the \textit{generalized differential}, defined as
\begin{equation} \label{eq:gen_diff}
\partial J(\bar{z})=\{\nu\,|\,J(\bar{z}+y)\geq J(\bar{z}) + \nu^T y, \forall \, y\in \real^\nz \}.
\end{equation}
Applying Theorem 1 from~\cite{clarke1976new} with the above definitions, we have
\begin{lemma}[\textbf{Necessary Condition for Local Optimality}] \label{lem:stationary}
	If $ \bar{z} $ is a locally optimal solution of the non-convex penalty problem,~\cref{prob:scvx_penalty}, then $ 0\in \partial J(\bar{z}) $.
\end{lemma}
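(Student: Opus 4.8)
The plan is to invoke the cited result (Theorem~1 of~\cite{clarke1976new}) and show that the generalized-gradient stationarity condition it produces coincides with $0 \in \partial J(\bar{z})$ for the specific penalized cost $J(z)$ in~\cref{eq:pen_cost}. First I would recall the standard calculus of the Clarke generalized gradient: since $g_0$, the $g_i$'s, and the $h_j$'s are all $C^1$, and since $J$ is built from them by composition with the locally Lipschitz convex functions $|\cdot|$ and $\max(0,\cdot)$ together with finite nonnegative sums, $J$ is locally Lipschitz everywhere, so by~\cref{def:gdd} its GDD exists in every direction and the support-function identity~\cref{eq:gdd2} holds with $\partial J(\bar{z})$ as in~\cref{eq:gen_diff}. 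Because $\bar{z}$ is a \emph{local} minimizer of the unconstrained problem~\cref{prob:scvx_penalty}, for every direction $s$ the difference quotient $(J(\bar{z}+\delta s)-J(\bar{z}))/\delta$ is nonnegative for all small $\delta>0$ with the inner point fixed at $\bar{z}$, hence the $\limsup$ in~\cref{eq:gdd} is also nonnegative; that is, $dJ(\bar{z},s)\geq 0$ for all $s$.

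Next I would combine $dJ(\bar{z},s)\geq 0$ for all $s$ with the representation~\cref{eq:gdd2}: we get $\max\{\nu^T s \mid \nu\in\partial J(\bar{z})\}\geq 0$ for every $s\in\real^\nz$. The set $\partial J(\bar{z})$ is nonempty, convex, and compact (again from local Lipschitz continuity of $J$), so if $0\notin\partial J(\bar{z})$ the separating hyperplane theorem would give a direction $s$ with $\nu^T s < 0$ for all $\nu\in\partial J(\bar{z})$, i.e. $dJ(\bar{z},s) < 0$, contradicting the previous display. Therefore $0\in\partial J(\bar{z})$, which is precisely the assertion of~\cref{lem:stationary}. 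This is the content of Theorem~1 of~\cite{clarke1976new}; the only work is to check that the hypotheses of that theorem — $J$ locally Lipschitz near $\bar{z}$, $\bar{z}$ a local unconstrained minimizer — are met here, which they are by the remarks preceding~\cref{def:gdd}.

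The main obstacle, such as it is, is bookkeeping rather than mathematics: one must be careful that the "directional derivative is nonnegative" step uses the correct one-sided, inner-point-pinned version of the difference quotient so that the $\limsup$ in~\cref{eq:gdd} is genuinely $\geq 0$ (the $\limsup$ is taken as $z\to\bar z$ \emph{and} $\delta\to0^+$, not just $\delta\to0^+$ at $z=\bar z$, so the naive "$J$ has a local min so the one-sided derivative is $\geq 0$" argument must be upgraded — but local Lipschitzness and the Clarke GDD's upper-semicontinuity in $z$ make this routine, and it is exactly what~\cite{clarke1976new} establishes). Once that point is granted, the separation argument closes the proof immediately, so I would keep the write-up short and lean on the cited theorem.
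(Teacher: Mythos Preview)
Your proposal is correct and takes essentially the same approach as the paper: the paper's entire argument for this lemma is the sentence ``Applying Theorem~1 from~\cite{clarke1976new} with the above definitions, we have\ldots'' preceding the statement, with no further proof given. Your write-up is a faithful unpacking of that citation---checking local Lipschitzness, observing $dJ(\bar z,s)\ge 0$ at a local minimizer, and closing with the support-function identity~\cref{eq:gdd2} plus separation---so it is more detailed than what the paper provides but follows the identical route.
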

Next, define the set of stationary points of~\cref{prob:scvx_penalty} as
$$ S \coloneqq \{z\in \real^\nz |\,  0\in \partial J (z)\}. $$
Then,~\cref{lem:stationary} states that if $ \bar{z} $ solves~\cref{prob:scvx_penalty}, then $ \bar{z}\in S $.

We now present the \textit{exactness} result. Note that this theorem is fairly well-known, and its proof can be found in, for example, Theorem~4.4 and~4.8 of~\cite{han1979exact}.
\begin{theorem}[\textbf{Exactness of the Penalty Function}] \label{thm:exactness}
	If $\bar{z}$ is a KKT point of the original ~\cref{prob:scvx_original} with multipliers $\bar{\mu}_i$ for all $i\in\ncvxeq\cup\ncvxineq$ and $ \bar{\sigma}_j$ for all $j\in\cvxineq$, and if the penalty weights $\lambda$ and $\tau$ satisfy
	\begin{align*}
	\lambda_i &> |\bar{\mu}_i|, \quad \forall \; i\in\ncvxeq\cup\ncvxineq, \\
	\tau_j &> |\bar{\sigma}_j|, \quad \forall \; j\in\cvxineq,
	\end{align*}
	then $\bar{z} \in S$, and by definition $\bar{z}$ satisfies the stationarity condition $0\in \partial J(\bar{z})$ for the penalty~\cref{prob:scvx_penalty}. \\
	Conversely, if a stationary point $ \bar{z} \in S $ of the penalty~\cref{prob:scvx_penalty} is feasible for the original~\cref{prob:scvx_original}, then it is also a KKT point of the original~\cref{prob:scvx_original}, and therefore, \cref{eq:kkt} holds at $\bar{z}$.
\end{theorem}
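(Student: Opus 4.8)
The plan is to reduce the whole statement to a single computation: an explicit, polytope-valued description of the generalized differential $\partial J(\bar z)$ at a point $\bar z$ that is feasible for~\cref{prob:scvx_original}. Once that description is in hand, the first implication is obtained by \emph{choosing} multipliers and the converse by \emph{reading them off}, each amounting to elementary linear algebra inside a (translated) zonotope. Notably, the exactness proof does not need the Local Optimum or KKT theorems of the excerpt directly; only the subdifferential calculus, the form of $J$, and the form of~\cref{eq:kkt}.

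\textbf{Step 1 (subdifferential calculus for $J$).} First I would record that the scalar maps $t\mapsto|t|$ and $t\mapsto\max(0,t)$ are convex, hence Clarke regular, with $\partial|\cdot|(0)=[-1,1]$ and $\partial\max(0,\cdot)(0)=[0,1]$, while away from the kink each is $C^1$. Composing with the $C^1$ functions $g_i$ and $h_j$ preserves regularity and makes the Clarke chain rule hold with \emph{equality}, so $\partial(\lambda_i|g_i|)(\bar z)=\lambda_i\,\partial|\cdot|(g_i(\bar z))\,\nabla g_i(\bar z)$ and similarly for the $\max$ terms; since $g_0\in C^1$ and every other summand is regular, the Clarke sum rule also holds with equality. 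Evaluating at a feasible $\bar z$ — where $g_i(\bar z)=0$ for $i\in\ncvxeq$, $g_i(\bar z)=0$ exactly on $I_{ac}(\bar z)$ and $g_i(\bar z)<0$ off it, and analogously for the $h_j$ — the inactive terms collapse to $\{0\}$ and one obtains
\begin{align*}
\partial J(\bar z) = \nabla g_0(\bar z) &+ \sum_{i\in\ncvxeq}\lambda_i\,[-1,1]\,\nabla g_i(\bar z) \\
&+ \sum_{i\in I_{ac}(\bar z)}\lambda_i\,[0,1]\,\nabla g_i(\bar z) + \sum_{j\in J_{ac}(\bar z)}\tau_j\,[0,1]\,\nabla h_j(\bar z),
\end{align*}
where the bracketed intervals denote segments, scaled and Minkowski-summed in the obvious way.

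\textbf{Step 2 (both directions).} For the first implication, assume $\bar z$ is a KKT point of~\cref{prob:scvx_original} with the stated multipliers; being a KKT point it is in particular feasible, so the formula of Step 1 applies. Set $t_i\definedas\bar\mu_i/\lambda_i$ for $i\in\ncvxeq$, $s_i\definedas\bar\mu_i/\lambda_i$ for $i\in I_{ac}(\bar z)$, and $\theta_j\definedas\bar\sigma_j/\tau_j$ for $j\in J_{ac}(\bar z)$. The strict bounds $\lambda_i>|\bar\mu_i|$ and $\tau_j>|\bar\sigma_j|$, together with $\bar\mu_i\ge0$ for $i\in\ncvxineq$ and $\bar\sigma_j\ge0$ for $j\in\cvxineq$, place $t_i\in(-1,1)$ and $s_i,\theta_j\in[0,1)$, and substituting these coefficients into the representation of Step 1 reproduces exactly the left-hand side of~\cref{eq:kkt}, which the KKT identity equates to $0$; hence $0\in\partial J(\bar z)$, i.e.\ $\bar z\in S$. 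For the converse, assume $\bar z\in S$ and $\bar z$ feasible for~\cref{prob:scvx_original}; feasibility again licenses the Step~1 description, and $0\in\partial J(\bar z)$ furnishes coefficients $t_i\in[-1,1]$, $s_i,\theta_j\in[0,1]$ realizing $0$ in that set. Setting $\bar\mu_i\definedas\lambda_i t_i$ for $i\in\ncvxeq$ (sign-free), $\bar\mu_i\definedas\lambda_i s_i\ge0$ for $i\in I_{ac}(\bar z)$ and $\bar\mu_i\definedas0$ for the remaining indices of $\ncvxineq$, and $\bar\sigma_j\definedas\tau_j\theta_j\ge0$ on $J_{ac}(\bar z)$ and $0$ elsewhere, these multipliers have the required signs and satisfy~\cref{eq:kkt} by construction, so $\bar z$ is a KKT point of~\cref{prob:scvx_original}. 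Observe that this second direction uses neither the penalty-weight bounds nor LICQ — feasibility alone pins down the structure of $\partial J(\bar z)$.

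\textbf{Main obstacle.} The only genuine difficulty is Step 1: establishing that the Clarke sum and chain rules hold with \emph{equality} rather than mere inclusion, which rests on the Clarke regularity of each penalty summand (convexity of $|\cdot|$ and $\max(0,\cdot)$, preserved under composition with the $C^1$ maps $g_i,h_j$) and on $g_0\in C^1$. A related bookkeeping point is to reconcile the ``generalized differential'' written in~\cref{eq:gen_diff} with the Clarke object characterized through the generalized directional derivative in~\cref{eq:gdd2}: I would carry out the argument with the latter and simply note that, at the feasible points of interest, the explicit polytope of Step 1 \emph{is} that set. Everything past Step 1 — both inclusions of the exactness claim — is routine manipulation of multipliers constrained to the intervals $[-1,1]$ and $[0,1]$.
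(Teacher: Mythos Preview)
Your proposal is correct and follows the standard exact-penalty argument. The paper itself does not give a proof of this theorem; it simply states that the result is well known and defers to Theorems~4.4 and~4.8 of Han~\cite{han1979exact}, whose argument is essentially the subdifferential computation you outline in Step~1 followed by the multiplier bookkeeping of Step~2.
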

Although~\cref{thm:exactness} does not suggest a way to find such $\lambda$ and $\tau$, it nevertheless has important theoretical implications. In our current implementation, we select ``sufficiently" large constant $\lambda$'s and $\tau$'s, a strategy that has been shown to work well in practice. Alternatively, the values of $\lambda$ and $\tau$ can be adjusted after each succession, based on the value of the dual solution obtained in the previously solved subproblem. \\
However, it is the second (i.e. ``converse'') part of~\cref{thm:exactness} that is particularly important to our subsequent convergence analysis. Specifically, it guarantees that as long as we can find a stationary point for~\cref{prob:scvx_penalty} that is also feasible for~\cref{prob:scvx_original}, then that point also satisfies the KKT conditions for~\cref{prob:scvx_original}. \\
Now, two important convergence results will be stated. The first one deals with the case of finite convergence, while the second handles situation where an infinite sequence $\{z^k\}$ is generated by the \scvx algorithm.
\begin{theorem} \label{thm:finite_converge}
	Given~\cref{asup:LICQ}, the predicted cost reductions $\Delta L^k$ defined in \cref{eq:DL} are nonnegative for all $k$. Furthermore, $\Delta L^k = 0$ implies that $ z^{k} \in S $, and therefore that $z^{k}$ is a stationary point of the non-convex penalty~\cref{prob:scvx_penalty}.
\end{theorem}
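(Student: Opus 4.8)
The plan is to dispatch the two assertions in turn; both rest on the single observation that the convex model $L^k$ agrees with the exact penalty $J$ to first order at the point of linearization. For the nonnegativity claim, I would first note that substituting $d=0$ into \cref{eq:sub_cost} annihilates every linear term and returns exactly \cref{eq:pen_cost} evaluated at $z^k$, so $L^k(0)=J(z^k)$. Since $r^k>0$ (the radius is initialized positive and is only ever multiplied by positive factors or floored at $r_l>0$), the origin is feasible for \cref{prob:scvx_sub}; the computed minimizer $d$ of \cref{prob:scvx_sub} therefore satisfies $L^k(d)\le L^k(0)=J(z^k)$, whence $\Delta L^k=J(z^k)-L^k(d)\ge 0$ for every $k$. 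This part is immediate and does not need \cref{asup:LICQ}.

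Next, suppose $\Delta L^k=0$, so that $L^k(d)=L^k(0)$ and $d=0$ is itself a minimizer of $L^k$ over $\{\,\|d\|\le r^k\,\}$. Because $r^k>0$, the origin is interior to the trust region, so $d=0$ is a local and, by convexity of $L^k$, a global minimizer of $L^k$ on $\real^\nz$. Equivalently, the one-sided directional derivative $L^{k\prime}(0;s)=\lim_{\delta\to 0^+}\bigl(L^k(\delta s)-L^k(0)\bigr)/\delta$ exists (convexity) and is nonnegative for every direction $s$.

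The remaining work is to transfer this nonnegativity into the statement $0\in\partial J(z^k)$. I would identify $L^{k\prime}(0;\cdot)$ with the generalized directional derivative $dJ(z^k;\cdot)$ of \cref{def:gdd}. Each summand of $L^k$ is a fixed convex function ($|\cdot|$ or $\max(0,\cdot)$) composed with the \emph{affine} first-order model of the matching summand of $J$, while each summand of $J$ is that same convex function composed with a $C^1$ map and is hence regular in the sense of Clarke; consequently both $L^{k\prime}(0;\cdot)$ and $dJ(z^k;\cdot)$ distribute over their respective sums. A term-by-term comparison then gives $L^{k\prime}(0;s)=dJ(z^k;s)$ for all $s$: on the pieces where $g_i(z^k)\neq 0$ or $h_j(z^k)\neq 0$ both sides are smooth with identical gradients, while on the kink pieces where $g_i(z^k)=0$ or $h_j(z^k)=0$ the Clarke chain rule reproduces exactly $\max\{t\,\nabla g_i(z^k)^Ts : t\in[-1,1]\}$ in the equality case and $\max\{0,\nabla g_i(z^k)^Ts\}$ (resp.\ with $h_j$) in the inequality case, which is precisely what the affine-argument model in $L^k$ yields. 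Hence $dJ(z^k;s)\ge 0$ for all $s$; by \cref{eq:gdd2} this says $\max\{\nu^Ts:\nu\in\partial J(z^k)\}\ge 0$ for all $s$, and using the standard fact that $\partial J(z^k)$ is nonempty, compact and convex, a strict separation argument excludes $0\notin\partial J(z^k)$. Therefore $0\in\partial J(z^k)$, i.e.\ $z^k\in S$, which is exactly stationarity for \cref{prob:scvx_penalty}. (\cref{asup:LICQ} is in force throughout but is not what drives this argument; it becomes essential only later, in passing from stationary points to KKT points via \cref{thm:exactness}.)

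I expect the main obstacle to be precisely the term-by-term identity $L^{k\prime}(0;s)=dJ(z^k;s)$ at the non-smooth points: one must verify that linearizing \emph{inside} $|\cdot|$ and $\max(0,\cdot)$, as \cref{eq:sub_cost} does rather than linearizing $J$ itself, leaves the first-order behavior unchanged. This forces the case split above together with an appeal to regularity of the outer convex functions, after which the conclusion $0\in\partial J(z^k)$ is routine.
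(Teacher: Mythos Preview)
Your proposal is correct and follows essentially the same approach as the paper: feasibility of $d=0$ gives $\Delta L^k\ge 0$, and when $\Delta L^k=0$ the fact that $d=0$ minimizes the convex model $L^k$ (which agrees with $J$ to first order at $z^k$) forces $0\in\partial J(z^k)$. The paper's own proof is terser---it defers the second step to \cite[Lemma~2]{SCvx_cdc16}---whereas you spell out the term-by-term identification $L^{k\prime}(0;s)=dJ(z^k;s)$ and the closing separation argument explicitly, which makes your version self-contained.
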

\begin{proof}
	Denote $d$ as the solution to the convex subproblem (i.e.~\cref{prob:scvx_sub}), and note that $d=0$ is always a feasible solution to said problem. Hence we have
	\begin{equation*}
	L^k(d) \leq L^k(0) = J(z^k).
	\end{equation*}
	Therefore, $ \Delta L^k=J(z^k)-L^k(d)\geq 0 $, and $ \Delta L^k=0 $ holds if and only if $ d=0 $ is the minimizer of~\cref{prob:scvx_sub}. When $ \Delta L^k=0 $, one can directly apply~\cref{lem:stationary} and~\cite[Lemma~2]{SCvx_cdc16} to get $ z^k \in S $ (see \cite{SCvx_cdc16} for a more detailed proof).
\end{proof}
The case when $ \{z^k \} $ is an infinite sequence represents a greater challenge. The analysis of this limit process is made more difficult due to the non-differentiability of the penalty function $ J(z) $. To facilitate further analysis, we first note that for any $ z $,
\begin{equation}
J(z+d) = L^k(d) + \orderof(\|d\|), \label{eq:taylor}
\end{equation}
where $ \orderof(\|d\|) $ denotes higher order terms of $ \|d\| $, i.e.,
$$ \lim_{\|d\|\rightarrow 0}{\frac{\orderof(\|d\|)}{\|d\|}} = 0, $$
and is independent of $ z $. This can be verified by writing out the Taylor expansion of $ g_i(z) $ and $ h_j(z) $ in $ J(z) $, and then using the fact that $ \orderof(\|d\|) $ can be taken out of $ |\cdot| $ and $ \max(0,\,\cdot\,) $.

The next lemma is a key preliminary result, and its proof also provides some geometric insights into the \scvx algorithm. The proof is similar to and based on that of~\cite[Lemma 3]{SCvx_cdc16}.
\begin{lemma} \label{lem:main}
	Let $ \bar{z} \in \real^\nz $ be a feasible point, but not a stationary point, of~\cref{prob:scvx_penalty}. Use $N(\bar{z},\bar{\epsilon})$ to denote an open neighborhood of $\bar{z}$ with radius $\bar{\epsilon}$, and let $\mathcal{P}(z,r)$ denote~\cref{prob:scvx_sub} with a linearization evaluated at $z$ and a trust region of radius $r$. Then, for any $c\in(0,1)$, there exist $ \bar{r}>0 $ and $ \bar{\epsilon}>0 $ such that for all $ z \in N(\bar{z},\bar{\epsilon}) $ and $r \in (0,\bar{r}\,]$, any optimal solution $ \dsol $ for $\mathcal{P}(z,r)$ satisfies
	\begin{equation}
	\rho(z,r) \definedas\frac{J(z)-J(z+\dsol)}{J(z)-L^k(\dsol)} \geq c.
	\end{equation}
\end{lemma}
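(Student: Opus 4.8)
The plan is to control the gap between the actual reduction $\Delta J(z,d) = J(z) - J(z+d)$ and the predicted reduction $\Delta L^k(d) = J(z) - L^k(d)$ using the Taylor-type estimate \cref{eq:taylor}, and then to bound the predicted reduction from below by a quantity proportional to $r$. Concretely, from \cref{eq:taylor} we have $J(z+\dsol) = L^k(\dsol) + \orderof(\|\dsol\|)$, and since $\dsol$ is feasible for $\mathcal{P}(z,r)$ we have $\|\dsol\| \leq r$. Therefore
\begin{equation*}
\rho(z,r) = \frac{J(z)-J(z+\dsol)}{J(z)-L^k(\dsol)} = 1 - \frac{\orderof(\|\dsol\|)}{\Delta L^k(\dsol)},
\end{equation*}
so it suffices to show that the ratio $\orderof(\|\dsol\|)/\Delta L^k(\dsol)$ can be made smaller than $1-c$ by shrinking $r$ and $\bar\epsilon$. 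The numerator is $o(r)$ uniformly in $z$ (this is where the $z$-independence of $\orderof$ in \cref{eq:taylor} is used), so the whole argument reduces to establishing a lower bound of the form $\Delta L^k(\dsol) \geq \kappa\, r$ for some $\kappa > 0$ that does not degenerate as $z \to \bar z$.

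The key step, and the main obstacle, is this linear-in-$r$ lower bound on the predicted decrease. Since $\bar z$ is feasible but not stationary for \cref{prob:scvx_penalty}, we have $0 \notin \partial J(\bar z)$, so by \cref{eq:gdd2} there is a descent direction: there exists a unit vector $s$ with $dJ(\bar z, s) = \max\{\nu^T s \mid \nu \in \partial J(\bar z)\} = -\eta < 0$. I would argue that $L^k(\cdot)$, being the convexified (inner-linearized) model, has a comparable directional decrease at $d=0$: the one-sided derivative of $t \mapsto L^k(t s)$ at $t=0^+$ equals $\nabla g_0(z)^T s$ plus the subgradient-type contributions from the $|\cdot|$ and $\max(0,\cdot)$ terms evaluated at $z$, and by continuity of all the gradients $\nabla g_i, \nabla h_j$ this one-sided derivative converges, as $z \to \bar z$, to the corresponding quantity built from the \emph{exact} penalty structure at $\bar z$ — which is exactly $dJ(\bar z, s) = -\eta$ (here one must check that for feasible $\bar z$ the linearized and true penalty terms have the same one-sided behavior, i.e. the inequality terms that are inactive contribute nothing and the equality/active terms contribute their gradient). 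Hence for $z$ close enough to $\bar z$ the slope of $L^k$ along $s$ at the origin is at most $-\eta/2$, and because $L^k$ is convex, $L^k(t s) \leq L^k(0) - (\eta/2) t = J(z) - (\eta/2)t$ for all $t \in (0, r]$ with $ts$ in the trust region. Taking $t = r$, the point $d = rs$ is feasible for $\mathcal{P}(z,r)$ with $L^k(rs) \leq J(z) - (\eta/2) r$, so the optimal value satisfies $\Delta L^k(\dsol) \geq (\eta/2) r =: \kappa r$ with $\kappa = \eta/2 > 0$ independent of $z$ in the neighborhood.

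Putting the pieces together: choose $\bar\epsilon$ small enough that the slope estimate above holds (slope of $L^k$ along $s$ at $0$ is $\leq -\eta/2$) for all $z \in N(\bar z, \bar\epsilon)$; then for $\|\dsol\| \leq r$,
\begin{equation*}
\rho(z,r) \;\geq\; 1 - \frac{|\orderof(r)|}{(\eta/2)\, r},
\end{equation*}
and since $|\orderof(r)|/r \to 0$ as $r \to 0$ uniformly in $z$, pick $\bar r > 0$ small enough that $|\orderof(r)|/r \leq (\eta/2)(1-c)$ for all $r \in (0,\bar r\,]$, yielding $\rho(z,r) \geq c$. I expect the delicate points to be (i) making the convergence of the one-sided derivative of $L^k(\cdot s)$ to $dJ(\bar z, s)$ rigorous at a point where $J$ is nonsmooth — this needs the feasibility of $\bar z$ so that the active/inactive index sets behave well and the $\max(0,\cdot)$ terms linearize consistently — and (ii) tracking the uniformity in $z$ of both the $\orderof$ term and the slope bound, which is what lets a single pair $(\bar r, \bar\epsilon)$ work for the whole neighborhood. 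The structure closely mirrors \cite[Lemma 3]{SCvx_cdc16}; the new content is carrying the extra $\max(0, h_j(z^k) + \nabla h_j(z^k)^T d)$ and $\max(0, g_i(z^k)+\nabla g_i(z^k)^T d)$ terms through the same argument, which is routine once the equality-constraint case is in hand.
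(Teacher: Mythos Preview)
Your overall strategy is right — reduce $\rho(z,r)$ to $1 - o(r)/\Delta L^k(\dsol)$ via \cref{eq:taylor} and then lower-bound $\Delta L^k(\dsol)$ by testing the feasible point $d = rs$ along a descent direction $s$. But the step where you invoke convexity of $L^k$ is backwards. If $\phi(t) := L^k(ts)$ is convex with right-derivative $\phi'_+(0) \leq -\eta/2$, convexity gives $\phi(t) \geq \phi(0) + \phi'_+(0)\, t$ for $t>0$ (the secant slope is never smaller than the right-derivative at the left endpoint), which is a \emph{lower} bound on $L^k(ts)$. You need the \emph{upper} bound $L^k(rs) \leq L^k(0) - (\eta/2)\, r$, i.e., $\Delta L^k(rs) \geq (\eta/2)\, r$, and convexity alone cannot deliver that: a convex piecewise-linear function with initial slope $-\eta$ can have a kink well before $t=r$ and return above $L^k(0) - (\eta/2)\, r$ at $t=r$. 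So the lower bound on the predicted decrease, which you correctly identify as the crux, does not follow from your argument.

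The paper's proof sidesteps this by never computing the directional derivative of $L^k$. Instead it works with $J$ itself: since $0 \notin \partial J(\bar z)$, strong separation yields a unit $s$ and $\kappa>0$ with $dJ(\bar z, s) \leq -\kappa$, and because the generalized directional derivative \cref{eq:gdd} is a $\limsup$ taken \emph{jointly} over $z \to \bar z$ and $\delta \to 0^+$, this immediately gives $\bigl(J(z+rs)-J(z)\bigr)/r < -\kappa/2$ for all $z \in N(\bar z,\bar\epsilon)$ and $r \in (0,\bar r\,]$. That is a bound on the \emph{actual} decrease, $\Delta J(z, rs) > (\kappa/2)\, r$, and the Taylor relation \cref{eq:taylor} transfers it to the model: $\Delta L^k(rs) = \Delta J(z, rs) + o(r) > (\kappa/2)\, r + o(r)$, hence $\Delta L^k(\dsol) \geq \Delta L^k(rs) > (\kappa/2)\, r + o(r)$. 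The uniformity in $z$ that you flag as delicate point (ii) comes for free from the limsup definition of the GDD, and there is no need to track the one-sided derivative of $L^k$ or the behavior of the active index sets as $z$ varies — which also dissolves your concern (i).
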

\begin{proof}
	Since $\bar{z}$ is feasible but not stationary, we know that $ 0 \notin \partial J(\bar{z}) $ according to~\cref{lem:stationary}.
	
	From \cref{eq:gen_diff}, it follows that the generalized differential $ \partial J(\bar{z}) $ is the intersection of half spaces, and hence is a closed convex set, which does not include the vector $0$. Hence, using~\cite[Corollary 11.4.2]{rockafellar1970convex} (a strong form of the separation theorem of two closed convex sets, i.e. $0$ and $\partial J(\bar{z})$), there exists a unit vector $ s\in\real^\nz $ and a scalar $ \kappa > 0 $ such that for all $ \nu\in \partial J(\bar{z}) $,
	\begin{equation}
	\nu^T s  \leq -\kappa <0. \label{eq:separation}
	\end{equation}
	Therefore, it follows that
	$$ \max \{\nu^T s \,|\,\nu\in \partial J(\bar{z})\} \leq -\kappa. $$
	The left hand side is exactly the expression for the GDD, as defined in \cref{eq:gdd2}. Therefore, we have
	\begin{equation*}
	dJ(\bar{z},s) \definedas \limsup\limits_{\mathclap{\substack{z\rightarrow \bar{z} \\ r\rightarrow 0^+}}} \frac{J(z+r s)-J(z)}{r} \leq -\kappa.
	\end{equation*}
	This implies that there exist positive $ \bar{r} $ and $ \bar{\epsilon} $ such that for all $ z \in N(\bar{z},\bar{\epsilon}) $ and $r \in (0,\,\bar{r}\,]$,
	\begin{equation} \label{eq:limit_delta}
	\frac{J(z+r s)-J(z)}{r} < -\frac{\kappa}{2}.
	\end{equation}
	Now, assume~\cref{prob:scvx_sub} is solved with $ z\in N(\bar{z},\bar{\epsilon}) $ and $ r\in(0,\,\bar{r}\,] $, producing an optimal solution $ \dsol $. By using \cref{eq:taylor}, the (nonlinear) change realized in $ J $ is
	\begin{align}
	\Delta J(z,\dsol) &=J(z)-J(z+\dsol) \nonumber \\
	&=J(z)-L^k(\dsol)-\orderof(\|\dsol\|) \nonumber \\
	&=\Delta L^k(\dsol)-\orderof(r). \label{eq:delta_j}
	\end{align}
	Thus, the ratio $\rho(z,r)$ is given by
	\begin{equation*}
	\rho(z,r)=\frac{\Delta J(z,\dsol)}{\Delta L^k(\dsol)} = 1-\frac{\orderof(r)}{\Delta L^k(\dsol)}.
	\end{equation*}
	Next, let $ d' = r s \in \real^\nz $. From the definition of $s$, it follows that $\|d'\| = r$, and therefore that $d'$ is within the thrust region of radius $\bar{r}$. Since $ \dsol $ is the optimal solution, we have $ L^k(\dsol)\leq L^k(d') $, which in turn means
	\begin{equation}
	\Delta L^k(\dsol) \geq \Delta L^k(d') . \label{eq:delta_l}
	\end{equation}
	Now, as  $ r \rightarrow 0 $, we will have  $ r \in (0,\,\bar{r}\,]$, and from \cref{eq:limit_delta} we have
	\begin{equation}
	\Delta J(z,d')=J(z)-J(z+d') > \left( \frac{\kappa}{2} \right) r. \label{eq:delta_j_prime}
	\end{equation}
	Replacing $ \dsol $ with $ d' $ in \cref{eq:delta_j} and substituting into \cref{eq:delta_j_prime},
	\begin{equation*}
	\Delta J(z,d')=\Delta L^k(d')-\orderof(r) > \left( \frac{\kappa}{2} \right) r.
	\end{equation*}
	Combining this with \cref{eq:delta_l}, we obtain
	\begin{equation}
	\Delta L^k(\dsol) - \orderof(r) \geq \Delta L^k(d')-\orderof(r) > \left( \frac{\kappa}{2} \right) r. \label{eq:delta_j_star}
	\end{equation}
	Thus $ \Delta L^k(\dsol) > ( \kappa/2 ) r + \orderof(r) > 0$, and the ratio
	\begin{equation}
	\rho(z,r) = 1-\frac{\orderof(r)}{\Delta L^k(\dsol)} > 1-\frac{\orderof(r)}{( \kappa/2 ) r + \orderof(r)}. \label{eq:rho_goes_to_1}
	\end{equation}
	Therefore, as $ r \rightarrow 0 $, $ \rho(z,r) \rightarrow 1 $, and thus for any $c\in(0,1)$, there exists $\bar{r} > 0 $  such that for all $ r\in(0,\,\bar{r}\,]$, $ \rho(z,r) \geq c $ holds.
\end{proof}

\begin{remark}
	An undesirable situation is one where steps are rejected indefinitely (i.e. by producing solutions that result in $\rho(z,r) < \rho_0$). \cref{lem:main} provides an assurance that the \scvx algorithm will not produce such behavior. By contracting $r^k$ sufficiently,~\cref{lem:main} guarantees that the ratio $\rho^k$ will eventually exceed $\rho_0$, and the algorithm will stop rejecting steps.
\end{remark}

We are now ready to present our main result. The proof of this theorem is based on that of~\cite[Theorem 4]{SCvx_cdc16}.
\begin{theorem} \label{thm:inf_converge}
	Given~\cref{asup:LICQ}, if the \scvx algorithm generates an infinite sequence $ \{ z^k \} $, then $ \{ z^k \} $ is guaranteed to have limit points. Furthermore, any such limit point, $ \bar{z} $, is a stationary point of the non-convex penalty~\cref{prob:scvx_penalty}.
\end{theorem}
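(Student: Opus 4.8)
The plan is a proof by contradiction resting on \cref{lem:main} together with the monotone decrease of the penalty cost $J$ along the accepted iterates. Throughout, ``iterate'' means an accepted one, since a rejected step leaves $z^k$ unchanged and only contracts the radius. Existence of limit points is the easy part: the sequence $\{z^k\}$ is bounded --- in the implementation the convex constraints confine $z^k$ to the compact set $X\times U$, and in any case $J$ is coercive while $\{J(z^k)\}$ is non-increasing --- so Bolzano--Weierstrass gives a subsequence $z^{k_j}\to\bar z$.

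Next I would show $\Delta L^k\to 0$. By \cref{thm:finite_converge}, $\Delta L^k\ge 0$ for all $k$, and since the sequence is infinite, $\Delta L^k>0$ for all $k$ (otherwise \cref{algo:SCvx} would terminate). At an accepted step, $\rho^k=\Delta J^k/\Delta L^k\ge\rho_0$, hence $J(z^{k+1})=J(z^k)-\Delta J^k\le J(z^k)-\rho_0\,\Delta L^k$. As $g_0$ is continuous on a compact set and the penalty terms are nonnegative, $J$ is bounded below, so $\{J(z^k)\}$ converges; telescoping yields $\sum_k\rho_0\,\Delta L^k<\infty$ and therefore $\Delta L^k\to 0$.

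For the contradiction, suppose $\bar z\notin S$, i.e.\ $\bar z$ is not stationary; since \cref{prob:scvx_penalty} is unconstrained, \cref{lem:main} applies to such a $\bar z$. Fix $c\in(\rho_0,1)$, take $\bar\epsilon,\bar r>0$ from \cref{lem:main}, and shrink $\bar r$ so that $|\orderof(r)|\le(\kappa/4)\,r$ on $(0,\bar r\,]$, where $\kappa>0$ is the separation constant constructed in that proof. Reusing the estimate from the proof of \cref{lem:main} (cf.\ \cref{eq:limit_delta} and \cref{eq:taylor}), for any $z^{k_j}\in N(\bar z,\bar\epsilon)$ and any $r\le\bar r$ the competitor $rs$ along the separating direction $s$ satisfies $\Delta L^{k_j}(rs)\ge(\kappa/4)\,r$; since $rs$ is feasible for the subproblem whenever $r$ does not exceed the trust-region radius, optimality of the computed step gives $\Delta L^{k_j}\ge(\kappa/4)\min\{\hat r_j,\bar r\}$, where $\hat r_j$ is the radius at which succession $k_j$ finally accepts. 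The decisive observation is that $\hat r_j$ is bounded below by a fixed positive constant for $j$ large: since $z^{k_j}\to\bar z$ we have $z^{k_j}\in N(\bar z,\bar\epsilon)$ eventually, and since $\rho(z^{k_j},r)\ge c>\rho_0$ for every $r\le\bar r$, no trial with radius $\le\bar r$ is ever rejected --- so either the step is accepted at the incoming radius, which is $\ge r_l$ by the floor in \cref{algo:SCvx}, or finitely many rejections occur and $\hat r_j\in(\bar r/\alpha,\bar r\,]$. In all cases $\Delta L^{k_j}\ge(\kappa/4)\min\{r_l,\bar r/\alpha\}>0$ for all large $j$, contradicting $\Delta L^k\to 0$. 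Hence $\bar z\in S$, i.e.\ $\bar z$ is a stationary point of the non-convex penalty \cref{prob:scvx_penalty}.

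I expect the radius bookkeeping in the last step to be the main obstacle: one has to exclude the possibility that succession $k_j$ accepts with a trust-region radius shrinking to $0$, which would drive $\Delta L^{k_j}\to 0$ and collapse the contradiction. This is handled by combining the $r_l$ floor, the guarantee from \cref{lem:main} that contraction halts once the radius reaches $\bar r$, and --- for an oversized incoming radius --- the scaled competitor direction $\bar r\,s$, which is always trust-region feasible and carries the same lower bound. The remaining ingredients (Bolzano--Weierstrass and the telescoping sum) are routine.
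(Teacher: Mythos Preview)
Your argument is correct and follows essentially the same contradiction strategy as the paper: both establish that near a non-stationary limit point the accepted trust-region radius is bounded below by $\min\{r_l,\bar r/\alpha\}$, and both convert this into a uniform positive lower bound on the predicted decrease along the subsequence, which clashes with the summability of $\Delta J^k$. The execution differs in one respect worth noting. You reach the lower bound on $\Delta L^{k_j}$ by reusing the internals of the proof of \cref{lem:main} --- the separating direction $s$ and the constant $\kappa$ --- to exhibit the explicit competitor $rs$. The paper instead treats \cref{lem:main} as a black box and builds a separate competitor: it solves \cref{prob:scvx_sub} once at the limit point $\bar z$ with radius $\delta/2$, calls the resulting point $\hat z$, and then uses continuity of $J$ and $L$ to show that $\hat d^{k_i}=\hat z-z^{k_i}$ is feasible at $(z^{k_i},r^{k_i})$ with $\Delta L^{k_i}(\hat d^{k_i})>\theta/2$ for a fixed $\theta>0$. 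Your route is a little shorter; the paper's keeps \cref{lem:main} encapsulated. A minor cleanup: in the rejection case you wrote $\hat r_j\in(\bar r/\alpha,\bar r\,]$, but all you actually need (and all that follows from the argument) is $\hat r_j>\bar r/\alpha$ --- the step could be accepted at a radius still exceeding $\bar r$.
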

\begin{proof}
	Since we have assumed the feasible region to be convex and compact, by the Bolzano-Weierstrass theorem (see e.g. \cite{rudin1964principles}), there is at least one convergent subsequence $ \{ z^{k_{i}} \} \rightarrow \bar{z} $, which is a guaranteed limit point.
	
	The proof of stationarity of $\bar{z}$ is by contradiction, i.e. we assume that $\bar{z}$ is \underline{not} a stationary point. From~\cref{lem:main},  there exist positive $ \bar{r} $ and $ \bar{\epsilon} $ such that
	$$ \rho(z,r) \geq \rho_0 \quad \forall \; z \in N(\bar{z},\bar{\epsilon}\,) \;\;\text{and}\;\; r\in(0,\,\bar{r}\,], $$
	since $ \rho_0 $ can be chosen arbitrarily small. Without loss of generality, we can suppose the whole subsequence $ \{ z^{k_{i}} \} $ is in $ N(\bar{z},\bar{\epsilon}) $, so that
	\begin{equation} \label{eq:accept}
	\rho(z^{k_{i}},r) \geq \rho_0 \quad \forall\; r\in(0,\,\bar{r}\,].
	\end{equation}
	If the initial trust region radius is less than $ \bar{r} $, then \cref{eq:accept} will be trivially satisfied. On the other hand, if the initial radius is greater than $ \bar{r} $, then the trust region radius may need to be reduced several times by the rejection step in line~\ref{line:reject} of~\cref{algo:SCvx} before condition \cref{eq:accept} is satisfied. For each $ k_i $, let $\hat{r}^{k_i}$ denote the last radius that needs to be reduced, and note that $ \hat{r}^{k_i} > \bar{r} $. Additionally, let $ r^{k_{i}} $ be the trust region radius selected after the last rejection step. Thus, we have~\cref{prob:scvx_sub}, which was obtained by linearizing about $ z^{k_i} $, and which is subject to the trust region constraint $ \|z - z^{k_i}\|\leq r^{k_{i}} $. Then
	\begin{equation*}
	r^{k_{i}} = \hat{r}^{k_i}/\alpha >  \bar{r}/\alpha.
	\end{equation*}
	Since there is also a lower bound $ r_{l} $ on $ r $, we have
	\begin{equation} \label{min_delta}
	r^{k_{i}} \geq \min \{r_{l}, \bar{r}/\alpha\} \triangleq \delta > 0.
	\end{equation}
	Note that condition \cref{eq:accept} can be expressed as
	\begin{equation} \label{eq:r_alter}
	J(z^{k_{i}})-J(z^{k_{i}+1}) \geq \rho_{0} \Delta L^{k_{i}}.
	\end{equation}
	By~\cref{thm:finite_converge}, we have $ \Delta L^{k_{i}} \geq 0 $. Thus, \cref{eq:r_alter} implies that the penalized cost is monotonically decreasing.
	
	Our next goal is to find a lower bound for $ \Delta L^{k_{i}} $. Let $ \dsol $ be the  solution of~\cref{prob:scvx_sub}, linearized about $ \bar{z} $, and with trust region radius $ \delta/2 $ (i.e. $ \|\dsol\| \leq \delta/2 $). Let $ \hat{z}=\bar{z}+\dsol $. Then,
	\begin{equation} \label{eq:norm1}
	\|\hat{z}-\bar{z}\| \leq \delta/2.
	\end{equation}
	Since $ \bar{z} $ is not a stationary point,~\cref{thm:finite_converge} implies
	\begin{equation*}
	\Delta L^k(\dsol)=J(\bar{z}) - L^k(\dsol) \triangleq \theta > 0.
	\end{equation*}
	Consequently, by continuity of $ J $ and $ L^k $, there exists an $ i_0 > 0 $ such that for all $ i \geq i_0 $
	\begin{align}
	J(z^{k_{i}}) - L^{k_i}(\hat{z}-z^{k_{i}}) &> \theta/2, \text{ and}  \label{eq:r_continuity} \\
	\|z^{k_{i}} - \bar{z}\| &< \delta/2.\label{eq:norm2}
	\end{align}
	From \cref{eq:norm1} and \cref{eq:norm2}, for all $ i \geq i_0 $, we have
	\begin{equation} \label{eq:tr_ki}
	\|\hat{z}-z^{k_{i}}\| \leq \|\hat{z}-\bar{z}\| + \|z^{k_{i}} - \bar{z}\| < \delta \leq r^{k_{i}},
	\end{equation}
	where the last inequality comes from \cref{min_delta}.
	
	Defining $ \hat{d}^{k_{i}}=\hat{z}-z^{k_{i}} $, \cref{eq:tr_ki} implies that $ \hat{d}^{k_{i}} $ is a feasible solution for the convex subproblem (i.e.~\cref{prob:scvx_sub}) at $ (z^{k_{i}},r^{k_{i}}) $ when $ i \geq i_0 $. Then if $ d^{k_{i}} $ is the optimal solution to this subproblem, we have $ L^{k_i}(d^{k_{i}}) \leq L^{k_i}(\hat{d}^{k_{i}}) $, so that
	\begin{align} \label{eq:theta/2}
	\Delta L^{k_i}(d^{k_{i}}) &= J(z^{k_{i}}) - L^{k_i}(d^{k_{i}}) \notag \\
	&\geq J(z^{k_{i}}) - L^{k_i}(\hat{d}^{k_{i}}) \notag \\
	&> \theta/2.
	\end{align}
	The last inequality is due to \cref{eq:r_continuity}. Combining \cref{eq:r_alter} and \cref{eq:theta/2}, we obtain the following for all $ i \geq i_0 $:
	\begin{equation} \label{eq:last_theta/2}
	J(z^{k_{i}})-J(z^{k_{i}+1}) \geq \rho_{0} \theta/2.
	\end{equation}
	However, since $k_i+1 \leq k_{(i+1)}$, and the penalized cost is not increasing due to \cref{eq:r_alter}, we have $ J(z^{k_{i}+1}) \geq J(z^{k_{(i+1)}}) $, and thus
	\begin{align*}
	\sum_{i=1}^{\infty} \left( J(z^{k_{i}})-J(z^{k_{i}+1}) \right) &\leq 
	\sum_{i=1}^{\infty} \left( J(z^{k_{i}})-J(z^{k_{(i+1)}}) \right) \\
	&=J(z^{k_{1}})-J(\bar{z}) \leq \infty.
	\end{align*}
	Therefore, the series is convergent, and necessarily
	$$ J(z^{k_{i}})-J(z^{k_{i}+1}) \;\rightarrow\; 0, $$
	which contradicts \cref{eq:last_theta/2}. This contradiction implies that every limit point $ \bar{z} $ is a stationary point of~\cref{prob:scvx_penalty}.
\end{proof}
Now, combining \cref{thm:finite_converge} and \cref{thm:inf_converge} and using the ``converse" part of \cref{thm:exactness}, we can summarize the final result as follows.
\begin{theorem}[\textbf{Global Weak Convergence}] \label{thm:global_converge}
	Given~\cref{asup:LICQ}, regardless of initial conditions, the \scvx algorithm (\cref{algo:SCvx}) always has limit points, and any limit point, $ \bar{z} $, is a stationary point of the non-convex penalty~\cref{prob:scvx_penalty}. Furthermore, if $ \bar{z} $ is feasible for the original non-cnvex~\cref{prob:scvx_original}, then it is a KKT point of~\cref{prob:scvx_original}.
\end{theorem}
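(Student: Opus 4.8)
The plan is to recognize this statement as a synthesis of the three preceding results rather than a fresh argument: \cref{thm:finite_converge} handles the finitely-terminating case, \cref{thm:inf_converge} handles the infinite-sequence case, and the ``converse'' half of \cref{thm:exactness} upgrades penalty-stationarity to a KKT point once feasibility is known. So I would begin by dichotomizing on whether \cref{algo:SCvx} halts after finitely many successions or generates an infinite sequence $\{z^k\}$.

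In the finite case, the algorithm halts only when $\Delta J^k = 0$ is detected in step 2. I would argue this forces $\Delta L^k = 0$: by \cref{thm:finite_converge} we already know $\Delta L^k \geq 0$, and if $\Delta L^k > 0$ then $\rho^k = \Delta J^k / \Delta L^k = 0 < \rho_0$, so the step is rejected and $r^k$ is contracted; but \cref{lem:main} guarantees that sufficiently small trust-region radii produce $\rho^k \geq \rho_0$, so the rejection loop cannot run forever, and the algorithm would eventually take an accepted step rather than halt, a contradiction. Hence $\Delta L^k = 0$, and \cref{thm:finite_converge} yields $z^k \in S$; taking $\bar z = z^k$, which is trivially its own limit point, gives a stationary point of \cref{prob:scvx_penalty}.

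In the infinite case I would invoke \cref{thm:inf_converge} essentially verbatim: the feasible region is convex and compact, so the Bolzano--Weierstrass theorem furnishes at least one convergent subsequence, whence a limit point $\bar z$ exists, and \cref{thm:inf_converge} shows every such $\bar z$ satisfies $0 \in \partial J(\bar z)$, i.e. $\bar z \in S$. Combining the two cases establishes the first assertion for arbitrary initial conditions. Finally, suppose $\bar z$ is feasible for \cref{prob:scvx_original}. Since $\bar z$ is a stationary point of the penalty \cref{prob:scvx_penalty}, the converse part of \cref{thm:exactness} applies directly and produces the multipliers satisfying \cref{eq:kkt}, so $\bar z$ is a KKT point of \cref{prob:scvx_original}.

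I expect essentially all of the real work to sit inside \cref{thm:inf_converge} and the \cref{lem:main} it rests on, both of which are assumed available here; the only genuinely new step is the bookkeeping in the finite-termination case that connects the literal stopping test $\Delta J^k = 0$ to penalty-stationarity, and the one mild subtlety there is the interaction between the halt condition and the trust-region rejection loop, which is resolved by the finiteness-of-rejections consequence of \cref{lem:main}.
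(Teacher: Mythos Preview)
Your proposal is correct and mirrors the paper's own treatment: the theorem is stated there as an immediate synthesis of \cref{thm:finite_converge}, \cref{thm:inf_converge}, and the converse half of \cref{thm:exactness}, with no argument beyond ``combining'' those results.

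One caveat on your finite-case bookkeeping: as written, \cref{algo:SCvx} halts \emph{immediately} upon detecting $\Delta J^k = 0$ in step~2, before $\rho^k$ is computed or the rejection branch in step~3 is reached. So the chain ``$\rho^k = 0 < \rho_0$, step rejected, $r^k$ contracts, \cref{lem:main} eventually forces acceptance'' does not match the actual control flow of the algorithm. This is arguably a loose end in the paper's own exposition as well---\cref{thm:finite_converge} yields stationarity from $\Delta L^k = 0$, not from the literal halting test $\Delta J^k = 0$, and the paper makes no attempt to bridge the two. Your overall structure is right; just be aware that this small link is not as clean as you have written it.
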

The weak convergence result is inline with classical nonlinear optimization analysis, but does not prevent oscillation among several limit points. In the next section, we will see that this result can in fact be strengthened to single point convergence under some mild additional assumptions. Further, on its own \cref{thm:global_converge} does not guarantee feasibility of the converged solution, even if the original non-convex problem, \cref{prob:scvx_original}, may have feasible regions. However, it is important to point out that this scenario is rarely observed in practice, and when it actually occurs, trying different initial guesses would often resolve it.

\subsection{Strong Convergence}
In this section, we will show that the \scvx algorithm indeed converges to a single limit point under some mild additional assumptions. For the purpose of this section, it is not necessary to distinguish non-convex inequality constraints $ g_{i}(z) \leq 0, \forall \, i\in\ncvxineq $ and convex inequality constraints $ h_{j}(z) \leq 0, \forall \, j\in\cvxineq $. Thus we will use $ g_{i}(z) \leq 0, \forall \, i\in \{\Nncvxeq+1, \ldots, \Nncvxineq+\Ncvxineq\} $ to denote all inequality constraints. For strong convergence, first we need a slightly stronger smoothness assumption.
\begin{assumption} \label{asup:lipschitz_grad}
	$ g_i(z), \forall \, i = 0, \ldots, \Nncvxineq+\Ncvxineq $ have Lipschitz continuous gradients, that is, $ \exists \, L_i \geq 0 $, s.t.
	\begin{equation*}
	\| \nabla g_i(z_2) - \nabla g_i(z_1) \| \leq L_i \| z_2 - z_1 \|.
	\end{equation*}
\end{assumption}

Using the simplified notation, we can rewrite the original non-convex (penalty) cost functions as
\begin{equation}
J(z) = g_0(z) + \sum_{i=1}^{e} \lambda_i |g_i(z)| + \sum_{i=\Nncvxeq+1}^{\Nncvxineq+\Ncvxineq} \lambda_i \max \big(0,g_i(z)\big),  \label{eq:simple_pen_cost}
\end{equation}
and the linearized (penalty) cost function at $ k $th iteration as
\begin{equation}
\begin{aligned}
L^k(d) = &\, g_0(z^k) + \nabla g_0(z^k)^T d + \sum_{i=1}^{e} \lambda_i |g_i(z^k) + \nabla g_i(z^k)^T d| \\
&+ \sum_{i=\Nncvxeq+1}^{\Nncvxineq+\Ncvxineq} \lambda_i \max \big(0,g_i(z^k) + \nabla g_i(z^k)^T d\big).  \label{eq:simple_lin_cost}
\end{aligned}
\end{equation}

The key assumption used in \cite{attouch2013convergence} to establish single point convergence is that the function been optimized satisfies the (nonsmooth) Kurdyka--{\L}ojasiewicz (KL) property~\cite[Definition~2.4]{attouch2013convergence}, which means, roughly speaking, that the functions under consideration are ``sharp up to a reparametrization". Real semi-algebraic functions provide a very rich class of functions satisfying the KL property. Many other functions may also satisfy this property, among which an important class is given by functions definable in an o-minimal structure~\cite{kurdyka1998gradients}. One can verify that many real-world optimal control problems indeed have KL property, especially by attesting its semi-algebracity. Therefore, it is reasonable to have the following assumption:
\begin{assumption} \label{asup:KL}
	The penalized cost function $ J(z) $ in \cref{eq:simple_pen_cost} have KL property~\cite[Definition~2.4]{attouch2013convergence}.
\end{assumption}
Given \cref{asup:KL}, \cite[Section~2.3]{attouch2013convergence} establishes three conditions (referred as H1--H3) one can check to ensure strong convergence of an optimization scheme. Note that H3, the \emph{continuity condition}, has already been proved in \cref{thm:inf_converge}. Now, let us proceed to verify the other two conditions in the context of \scvx algorithm.
\begin{condition}[\textbf{Sufficient Decrease}] \label{cond:H1}
	At iteration $ k $, let $ z^k $ be the current point, and $ z^{k+1} $ be the accepted solution by the \scvx algorithm, then the penalized cost function $ J(\cdot) $ in \cref{eq:simple_pen_cost} satisfies
	\begin{equation}
	J(z^k) - J(z^{k+1}) \geq a \|z^{k+1} - z^k\|^2, \label{eq:suff_dec}
	\end{equation}
	where $ a > 0 $ is some constant.
\end{condition}
\begin{condition}[\textbf{Relative Error}] \label{cond:H2}
	At iteration $ k $, let $ z^k $ be the current point, and $ z^{k+1} $ be the accepted solution by the \scvx algorithm, then given~\cref{asup:lipschitz_grad}, $ \exists \, \omega^{k+1} \in \partial J(z^{k+1}) $, s.t.
	\begin{equation}
	\| \omega^{k+1} \| \leq b \|z^{k+1} - z^k \|,  \label{eq:relative_error}
	\end{equation}
	where $ b > 0 $ is some constant, and $ \partial J(\cdot) $ is the generalized differential defined in~\cref{eq:gen_diff}.
\end{condition}
The intuition behind these two conditions can be found in \cite[p.~92]{attouch2013convergence}, and roughly speaking, \cref{cond:H1} measures the quality of a descent step, while \cref{cond:H2} reflects relative inexact optimality conditions for subproblems. The proof of \cref{cond:H1} is relatively straightforward, and is given as follows.
\begin{proof}[Proof of~\cref{cond:H1}]
	Note that in \cref{eq:delta_j}, $ d^* = z^{k+1} - z^k $, then from \cref{eq:delta_j}, \cref{eq:delta_j_star} and the trust region constraint $ \|d^*\| \leq r $, we have that
	\begin{equation}
	J(z^k) - J(z^{k+1}) \geq \frac{\kappa}{2}r \geq \frac{\kappa}{2}\|d^*\|. \label{eq:delta_j_k}
	\end{equation}
	Since $ z \in Z = X \times U $, and $ Z $ is compact, $ \|d^*\| = \|z^{k+1} - z^k\| $ is bounded. Let $ D = \underset{z_1, z_2 \in Z}{\max} \|z_1 - z_2\| $, then $ \|d^*\| \leq D, \forall \, d $.
	
	Now, let $ a = \frac{\kappa}{2D} $, then from \ref{eq:delta_j_k}, we have
	\begin{equation*}
	J(z^k) - J(z^{k+1}) \geq \frac{\kappa}{2} \frac{1}{\|d^*\|} \|d^*\|^2 \geq \frac{\kappa}{2D} \|d^*\|^2 = a \|d^*\|^2 = a \|z^{k+1} - z^k\|^2,
	\end{equation*}
	which is exactly \cref{eq:suff_dec}.
\end{proof}

The verification of \cref{cond:H2} requires a bit more work and a constructive approach. We will leverage convexity of the generalized subdifferential and some special structures of the two types of non-smoothness, $ | \cdot | $ and $ \max(0, \cdot) $ presented in \cref{eq:simple_pen_cost,eq:simple_lin_cost}. Namely, $ \partial |g_i(z)| $ is symmetric with respect to $ 0 $, and in fact a convex hull spanned by $ -\nabla g_i(z) $ and $ \nabla g_i(z) $. Thus we have $ \forall \, \omega_i \in \partial |g_i(z)|, i = 1, \ldots, e $, $ \exists \, \alpha_i \in [0,1] $, s.t.
\begin{equation}
\begin{aligned}
\omega_i &= \alpha_i \nabla g_i(z) + (1-\alpha_i) (-\nabla g_i(z)), \\
&= (2\alpha_i -1)\nabla g_i(z)   \label{partial_equality}.
\end{aligned}
\end{equation}
Similarly, $ \partial \max \big(0,g_i(z)\big) $ is a convex hull of $ 0 $ and $ \nabla g_i(z) $, and we also have $ \forall \, \omega_i \in \partial \max \big(0,g_i(z)\big), i = \Nncvxeq+1, \ldots, \Nncvxineq+\Ncvxineq $, $ \exists \, \alpha_i \in [0,1] $, s.t.
\begin{equation}
\omega_i = \alpha_i \nabla g_i(z) + (1-\alpha_i) 0 = \alpha_i \nabla g_i(z)   \label{partial_inequality}.
\end{equation}

Next we present two technical lemmas asserting the close proximity of the generalized differentials at $ z^{k+1} $ and their linear approximations around $ z^k $.
\begin{lemma}  \label{lem:diff_eq}
	For any $ \nu_i^k \in \partial_d |g_i(z^k) + \nabla g_i(z^k)^T d| $, $ i = 1, \ldots, e $, there exists $ \omega_i^{k+1} \in \partial |g_i(z^{k+1})| $ and constant $ c_i \geq 0 $ s.t.
	\begin{equation*}
	\| \omega_i^{k+1} - \nu_i^k \| \leq c_i \| z^{k+1} - z^k \|.
	\end{equation*}
\end{lemma}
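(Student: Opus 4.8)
The plan is to exploit the fact that both sides are one-parameter families aligned, respectively, with $\nabla g_i(z^k)$ and $\nabla g_i(z^{k+1})$, and then to match parameters and invoke the Lipschitz gradient hypothesis. First I would record the structure of the left-hand object: since $d\mapsto |g_i(z^k)+\nabla g_i(z^k)^T d|$ is the absolute value composed with an affine map, it is convex in $d$, and, evaluated at the accepted step $\dsol = z^{k+1}-z^k$, its subdifferential in $d$ is contained in the segment $\mathrm{conv}\{-\nabla g_i(z^k),\nabla g_i(z^k)\}$. Exactly as in~\cref{partial_equality}, every $\nu_i^k$ in this set can be written as $\nu_i^k = (2\beta_i-1)\nabla g_i(z^k)$ for some $\beta_i\in[0,1]$, with $\beta_i$ permitted to lie strictly inside $(0,1)$ only when the linearized value $g_i(z^k)+\nabla g_i(z^k)^T\dsol$ vanishes, and $\beta_i\in\{0,1\}$ otherwise.

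Next I would apply the same parametrization~\cref{partial_equality} on the right: $\partial|g_i(z^{k+1})|$ consists of the points $(2\alpha_i-1)\nabla g_i(z^{k+1})$ with $\alpha_i\in[0,1]$. The natural candidate is to reuse the parameter, i.e. set $\omega_i^{k+1} \definedas (2\beta_i-1)\nabla g_i(z^{k+1})$. Granting that this indeed belongs to $\partial|g_i(z^{k+1})|$, the estimate is immediate: $\|\omega_i^{k+1}-\nu_i^k\| = |2\beta_i-1|\,\|\nabla g_i(z^{k+1})-\nabla g_i(z^k)\| \le \|\nabla g_i(z^{k+1})-\nabla g_i(z^k)\| \le L_i\|z^{k+1}-z^k\|$, where the last inequality is~\cref{asup:lipschitz_grad}; hence one may take $c_i = L_i$.

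The step I expect to be the main obstacle is precisely verifying that the candidate $\omega_i^{k+1}$ lies in $\partial|g_i(z^{k+1})|$ in every sign configuration, because $\partial|g_i(\cdot)|$ degenerates to the single point $\mathrm{sign}\big(g_i(z^{k+1})\big)\nabla g_i(z^{k+1})$ whenever $g_i(z^{k+1})\neq 0$, so there $\alpha_i$ is forced to be $0$ or $1$. I would resolve this with a case split driven by the first-order relation $g_i(z^{k+1}) = g_i(z^k)+\nabla g_i(z^k)^T\dsol + \orderof(\|\dsol\|)$, the same expansion underlying~\cref{eq:taylor}: when both the linearized and the true constraint values vanish, any common $\beta_i\in[0,1]$ is admissible on both sides and one matches directly; when the true value is nonzero one takes $\omega_i^{k+1}$ to be the unique element $\mathrm{sign}\big(g_i(z^{k+1})\big)\nabla g_i(z^{k+1})$ of $\partial|g_i(z^{k+1})|$ and uses the expansion to show the selected sign is compatible with the one carried by $\nu_i^k$, so that only the gradient difference $\nabla g_i(z^{k+1})-\nabla g_i(z^k)$ — again controlled by~\cref{asup:lipschitz_grad} — survives. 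Finally, I would phrase this case analysis so that it transfers verbatim, with $\mathrm{conv}\{0,\nabla g_i(z^{k+1})\}$ and~\cref{partial_inequality} in place of the symmetric segment, to the companion estimate needed for the $\max(0,\cdot)$ terms in the next lemma.
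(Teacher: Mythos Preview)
Your core strategy coincides with the paper's: parametrize both subgradients via~\cref{partial_equality} and set the free coefficient equal to the given one. Your execution is in fact tighter than the paper's. You compute directly
\[
\|\omega_i^{k+1}-\nu_i^k\| = |2\beta_i-1|\,\|\nabla g_i(z^{k+1})-\nabla g_i(z^k)\|\le L_i\|z^{k+1}-z^k\|,
\]
giving $c_i=L_i$; the paper instead inserts $\pm\nabla g_i(z^{k+1})\mp\nabla g_i(z^k)$, applies a triangle inequality, and only then substitutes the parametrizations, arriving at the looser $c_i=(3-2\bar\alpha_i)L_i$. The detour buys nothing.

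You are also right to flag the obstacle that the paper skips: the paper simply asserts ``$\alpha_i\in[0,1]$ is free'' and takes $\alpha_i=\bar\alpha_i$ without checking that $(2\bar\alpha_i-1)\nabla g_i(z^{k+1})$ actually lies in $\partial|g_i(z^{k+1})|$. When $g_i(z^{k+1})\ne 0$ that set is the singleton $\{\mathrm{sign}(g_i(z^{k+1}))\nabla g_i(z^{k+1})\}$, so the coefficient is forced. Your case split is the right instinct, but the sign-compatibility step does not close the gap as stated. Under~\cref{asup:lipschitz_grad} the linearized value $g_i(z^k)+\nabla g_i(z^k)^T\dsol$ and the true value $g_i(z^{k+1})$ differ only by $O(\|\dsol\|^2)$, yet when both are small this still permits opposite signs. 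In that configuration $\nu_i^k=\pm\nabla g_i(z^k)$ while the only admissible $\omega_i^{k+1}$ is $\mp\nabla g_i(z^{k+1})$, so $\|\omega_i^{k+1}-\nu_i^k\|\approx 2\|\nabla g_i(z^k)\|$, which is not controlled by $\|z^{k+1}-z^k\|$. The Taylor remainder bounds the \emph{values}, not the gradients, so it cannot rescue the estimate here. This same configuration is unaddressed in the paper's proof; your proposal goes further than the paper in recognizing the issue, but the resolution you sketch would need more to handle the sign-flip case.
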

\begin{proof}
	The distance between the two generalized differentials is captured by the distance between the two vectors, $ \omega_i^{k+1} $ and $ \nu_i^k $, and we have
	\begin{equation}
	\label{partial_diff}
	\begin{aligned}
	\| \omega_i^{k+1} - \nu_i^k \| &= \| \omega_i^{k+1} - \nabla g_i(z^{k+1}) + \nabla g_i(z^{k+1}) - \nu_i^k + \nabla g_i(z^k) - \nabla g_i(z^k) \|  \\
	&\leq \| \omega_i^{k+1} - \nabla g_i(z^{k+1}) - \nu_i^k + \nabla g_i(z^k) \| + \| \nabla g_i(z^{k+1}) - \nabla g_i(z^k)\|.
	\end{aligned}
	\end{equation}
	Now, using \cref{partial_equality}, we have $ \omega_i^{k+1} = (2\alpha_i -1)\nabla g_i(z^{k+1}) $, and $ \nu_i^k = (2\bar{\alpha}_i -1)\nabla g_i(z^k) $. Note that $ \alpha_i \in [0,1] $ is free, while $ \bar{\alpha}_i \in [0,1] $ is fixed for a given $ \nu_i^k $. Therefore, we have
	\begin{equation*}
	\| \omega_i^{k+1} - \nu_i^k \| \leq \| 2(\alpha_i -1)\nabla g_i(z^{k+1}) - 2(\bar{\alpha}_i -1)\nabla g_i(z^k) \| + \| \nabla g_i(z^{k+1}) - \nabla g_i(z^k)\|.
	\end{equation*}
	As illustrated in \cref{fig:convex}, we may choose $ \alpha_i = \bar{\alpha}_i $, and get
	\begin{align*}
	\| \omega_i^{k+1} - \nu_i^k \| &\leq 2(1 - \bar{\alpha}_i) \| \nabla g_i(z^{k+1}) - \nabla g_i(z^k)\| + \| \nabla g_i(z^{k+1}) - \nabla g_i(z^k)\|  \\
	&= (3 - 2\bar{\alpha}_i) \| \nabla g_i(z^{k+1}) - \nabla g_i(z^k)\|.
	\end{align*}
	By~\cref{asup:lipschitz_grad}, we have
	\begin{displaymath}
	\| \omega_i^{k+1} - \nu_i^k \| \leq (3 - 2\bar{\alpha}_i) L_i \| z^{k+1} - z^k \| := c_i  \| z^{k+1} - z^k \|.
	\end{displaymath}
\end{proof}

\begin{figure}[!ht]
	\begin{centering}
		\captionsetup{width=0.6\textwidth}
		\includegraphics[width=0.6\textwidth]{./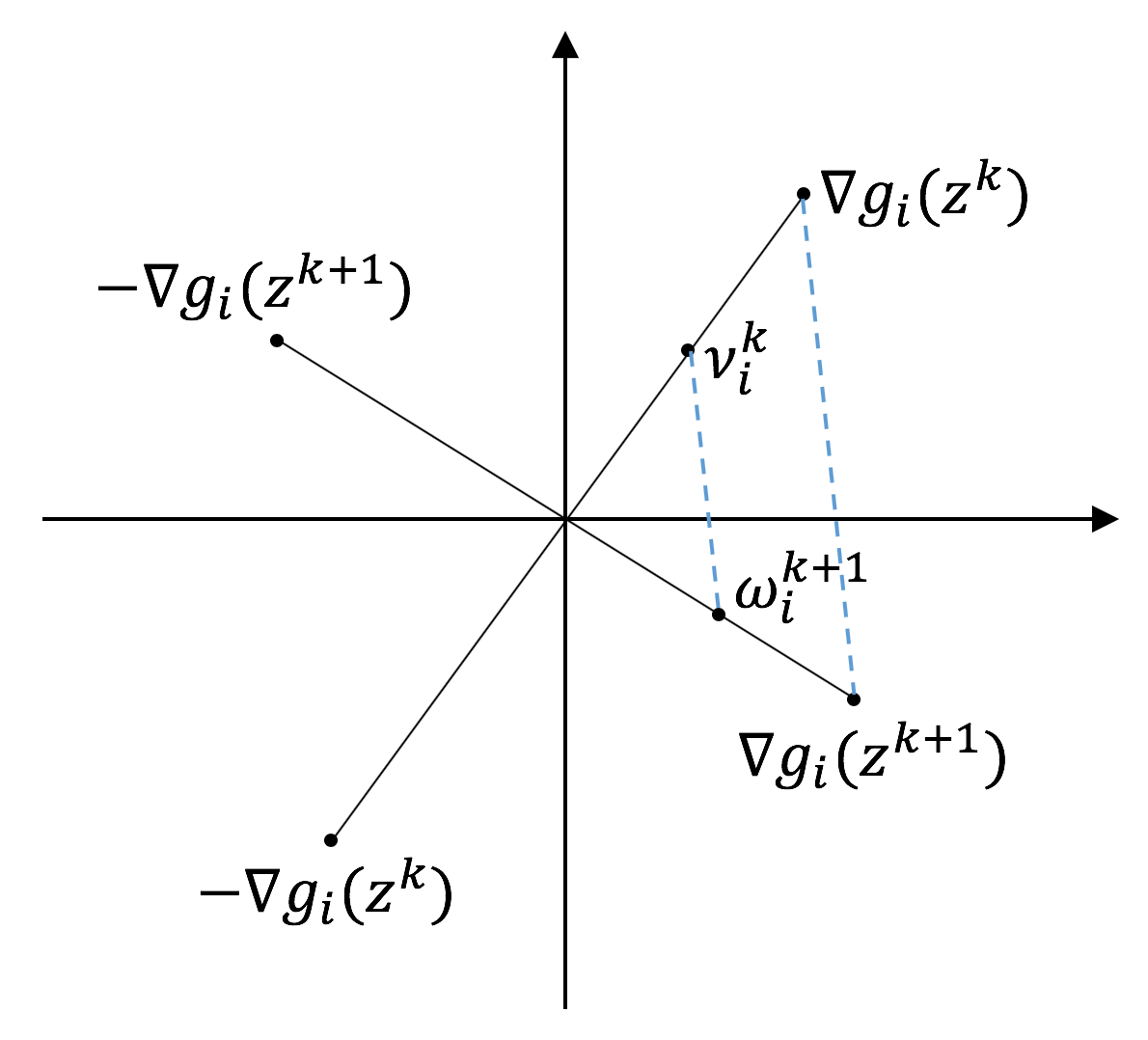}
		\caption{\emph{In the proof of \cref{lem:diff_eq}, by choosing $ \alpha_i = \bar{\alpha}_i $, we are effectively setting $ \omega_i^{k+1} $ to have the same convex combination factor as $ \nu_i^k $.}}
		\label{fig:convex}
	\end{centering}
\end{figure}

\begin{lemma}  \label{lem:diff_ineq}
	For any $ \nu_i^k \in \partial_d \max \big(0,g_i(z^k) + \nabla g_i(z^k)^T d\big) $, $ i = \Nncvxeq+1, \ldots, \Nncvxineq+\Ncvxineq $, there exists $ \omega_i^{k+1} \in \partial \max \big(0,g_i(z^{k+1})\big) $ and constant $ c_i \geq 0 $ s.t.
	\begin{equation*}
	\| \omega_i^{k+1} - \nu_i^k \| \leq c_i \| z^{k+1} - z^k \|.
	\end{equation*}
\end{lemma}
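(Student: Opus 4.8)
The argument will parallel that of \cref{lem:diff_eq} almost verbatim; the only structural change is that the symmetric hull $\mathrm{conv}\{-\nabla g_i(z),\nabla g_i(z)\}$ describing $\partial|g_i(z)|$ is replaced by the one-sided hull $\mathrm{conv}\{0,\nabla g_i(z)\}$ describing $\partial\max(0,g_i(z))$, so that the representation \cref{partial_inequality}, namely $\omega_i=\alpha_i\nabla g_i(z)$ with $\alpha_i\in[0,1]$, plays the role that \cref{partial_equality} played before. Given $\nu_i^k\in\partial_d\max\big(0,g_i(z^k)+\nabla g_i(z^k)^Td\big)$, write $\nu_i^k=\bar\alpha_i\nabla g_i(z^k)$ for the coefficient $\bar\alpha_i\in[0,1]$ that is fixed once $\nu_i^k$ is fixed, and—exactly as in \cref{fig:convex}—take the candidate $\omega_i^{k+1}:=\bar\alpha_i\nabla g_i(z^{k+1})$, i.e. reuse the same convex-combination coefficient.

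With this choice $\omega_i^{k+1}-\nu_i^k=\bar\alpha_i\big(\nabla g_i(z^{k+1})-\nabla g_i(z^k)\big)$, and since $\bar\alpha_i\in[0,1]$, \cref{asup:lipschitz_grad} at once gives
\begin{equation*}
\|\omega_i^{k+1}-\nu_i^k\|=\bar\alpha_i\,\|\nabla g_i(z^{k+1})-\nabla g_i(z^k)\|\le \bar\alpha_i L_i\|z^{k+1}-z^k\|\le L_i\|z^{k+1}-z^k\|,
\end{equation*}
so $c_i:=\bar\alpha_i L_i$ (or, more crudely, $c_i:=L_i$) works. This half of the proof is routine.

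The one point that needs care—and the main obstacle—is verifying that the constructed $\omega_i^{k+1}=\bar\alpha_i\nabla g_i(z^{k+1})$ genuinely lies in $\partial\max\big(0,g_i(z^{k+1})\big)$: away from the kink this set degenerates to a singleton ($\{0\}$ when $g_i(z^{k+1})<0$, $\{\nabla g_i(z^{k+1})\}$ when $g_i(z^{k+1})>0$), so an arbitrary coefficient is not admissible there. I would settle this with a short case split on the sign of $g_i(z^{k+1})$, using the first-order relation underlying \cref{eq:taylor}: the linearized value $g_i(z^k)+\nabla g_i(z^k)^T(z^{k+1}-z^k)$ differs from $g_i(z^{k+1})$ only by a term of order $\|z^{k+1}-z^k\|^2$ (bounded by $\tfrac{L_i}{2}\|z^{k+1}-z^k\|^2$ under \cref{asup:lipschitz_grad}). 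Hence whenever $g_i(z^{k+1})$ has a definite sign and the step is small the linearized value carries the same sign, forcing $\bar\alpha_i\in\{0,1\}$ in agreement with the singleton, while at $g_i(z^{k+1})=0$ the subdifferential is the full hull and any $\bar\alpha_i\in[0,1]$ is admissible. In every case the displayed bound holds, which proves the lemma; the same subtlety is implicit in \cref{lem:diff_eq} and is handled identically there.
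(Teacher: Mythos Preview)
Your approach is essentially the paper's: represent $\nu_i^k=\bar\alpha_i\nabla g_i(z^k)$ via \cref{partial_inequality} and choose $\omega_i^{k+1}$ with the \emph{same} coefficient $\bar\alpha_i$, then invoke \cref{asup:lipschitz_grad}. The only difference is cosmetic. The paper first passes through the add--and--subtract decomposition \cref{partial_diff}, obtaining
\[
\|\omega_i^{k+1}-\nu_i^k\|\le\|(\alpha_i-1)\nabla g_i(z^{k+1})-(\bar\alpha_i-1)\nabla g_i(z^k)\|+\|\nabla g_i(z^{k+1})-\nabla g_i(z^k)\|,
\]
and then sets $\alpha_i=\bar\alpha_i$ to get $c_i=(2-\bar\alpha_i)L_i$. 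Your direct computation $\omega_i^{k+1}-\nu_i^k=\bar\alpha_i(\nabla g_i(z^{k+1})-\nabla g_i(z^k))$ skips that detour and yields the sharper constant $c_i=\bar\alpha_i L_i\le L_i$; for the purposes of \cref{cond:H2} either constant suffices.

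On the ``one point that needs care'': you are right to flag it, and you should know that the paper does \emph{not} address it. The paper simply writes $\omega_i^{k+1}=\alpha_i\nabla g_i(z^{k+1})$ with $\alpha_i$ treated as free in $[0,1]$ and then sets $\alpha_i=\bar\alpha_i$, exactly as in \cref{lem:diff_eq}. So your proposal already matches the paper's level of rigor; the case split you sketch is an addition, not something the paper supplies. Your patch is fine for the regime that matters (tail of a convergent sequence, where steps are small and the sign of $g_i(z^{k+1})$ eventually stabilizes and agrees with that of the linearized value up to the $O(\|z^{k+1}-z^k\|^2)$ remainder), which is precisely how the lemma is used in the proof of \cref{cond:H2}. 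As a stand-alone statement for arbitrary $z^k,z^{k+1}$ the membership issue is genuinely delicate---if $g_i(z^{k+1})>0$ while the linearized value is negative, the only admissible $\omega_i^{k+1}$ is $\nabla g_i(z^{k+1})$ and $\bar\alpha_i=0$, so the desired bound need not hold---but neither your argument nor the paper's claims to cover that case, and it does not arise in the application.
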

\begin{proof}
	We still have \cref{partial_diff} in this case. 
	Now, using \cref{partial_inequality}, we have $ \omega_i^{k+1} = \alpha_i \nabla g_i(z^{k+1}) $, and $ \nu_i^k = \bar{\alpha}_i \nabla g_i(z^k) $. Again, note that $ \alpha_i \in [0,1] $ is free, while $ \bar{\alpha}_i \in [0,1] $ is fixed for a given $ \nu_i^k $. Therefore, from \cref{partial_diff} we have
	\begin{equation*}
	\| \omega_i^{k+1} - \nu_i^k \| \leq \| (\alpha_i -1)\nabla g_i(z^{k+1}) - (\bar{\alpha}_i -1)\nabla g_i(z^k) \| + \| \nabla g_i(z^{k+1}) - \nabla g_i(z^k)\|.
	\end{equation*}
	Again, choose $ \alpha_i = \bar{\alpha}_i $, and we get
	\begin{align*}
	\| \omega_i^{k+1} - \nu_i^k \| &\leq (1 - \bar{\alpha}_i) \| \nabla g_i(z^{k+1}) - \nabla g_i(z^k)\| + \| \nabla g_i(z^{k+1}) - \nabla g_i(z^k)\|  \\
	&= (2 - \bar{\alpha}_i) \| \nabla g_i(z^{k+1}) - \nabla g_i(z^k)\|.
	\end{align*}
	By~\cref{asup:lipschitz_grad}, we have
	\begin{displaymath}
	\| \omega_i^{k+1} - \nu_i^k \| \leq (2 - \bar{\alpha}_i) L_i \| z^{k+1} - z^k \| := c_i  \| z^{k+1} - z^k \|.
	\end{displaymath}
\end{proof}

Now we are ready to present the proof of~\cref{cond:H2}, which makes use of the optimality conditions of the convex subproblem.
\begin{proof}[Proof of~\cref{cond:H2}]
	First note that
	\begin{equation}
	\omega^{k+1} = \nabla g_0(z^{k+1}) + \sum_{i=1}^{e} \lambda_i \omega_i^{k+1} + \sum_{i=\Nncvxeq+1}^{\Nncvxineq+\Ncvxineq} \lambda_i \omega_i^{k+1},  \label{omega}
	\end{equation}
	where $ \omega_i^{k+1} \in \partial |g_i(z^{k+1})| $, $ i = 1, \ldots, e$ and $ \omega_i^{k+1} \in \partial \max \big(0,g_i(z^{k+1})\big) $, $ i = \Nncvxeq+1, \ldots, \Nncvxineq+\Ncvxineq $.
	Now let us turn to the stationarity condition for the convex subproblem:
	\begin{equation}
	0 \in \partial_d L^k(d) + \mathcal{N}_{\|d\| \leq r^k}(d)  \label{sub_kkt}
	\end{equation}
	where $ \mathcal{N}_{\|d\| \leq r^k}(d) $ is the normal cone of set $ \|d\| \leq r^k $ at $ d $, which turns out to be $ c\,d $, i.e. $ c\,(z^{k+1} - z^k) $, where $ c\geq 0 $ is a constant. From the formula of $ L^k(d) $ in \cref{eq:simple_lin_cost}, condition in \cref{sub_kkt} is equivalent to
	\begin{equation}
	\exists \, \nu^k \in \partial_d L^k(d), \quad \textnormal{s.t. } 0 = \nu^k + c\,(z^{k+1} - z^k), \label{sub_kkt_eq}
	\end{equation}
	where $ \nu^k = \nabla g_0(z^k) + \sum_{i=1}^{e} \lambda_i \nu_i^k + \sum_{i=\Nncvxeq+1}^{\Nncvxineq+\Ncvxineq} \lambda_i \nu_i^k $, where $ \nu_i^k \in \partial_d |g_i(z^k) + \nabla g_i(z^k)^T d|$, $ i = 1, \ldots, e$ and $ \nu_i^k \in \partial_d \max \big(0,g_i(z^k) + \nabla g_i(z^k)^T d\big) $, $ i = \Nncvxeq+1, \ldots, \Nncvxineq+\Ncvxineq $. Therefore, \cref{sub_kkt_eq} can be written as
	\begin{align*}
	0 = &\, \nabla g_0(z^k) + \sum_{i=1}^{e} \lambda_i \nu_i^k + \sum_{i=\Nncvxeq+1}^{\Nncvxineq+\Ncvxineq} \lambda_i \nu_i^k + c\,(z^{k+1} - z^k)  \\
	\Leftrightarrow \quad 0 = &\, \nabla g_0(z^k) - \nabla g_0(z^{k+1}) + \nabla g_0(z^{k+1})  \\
	&+ \sum_{i=1}^{e} \lambda_i (\nu_i^k - \omega_i^{k+1} + \omega_i^{k+1}) \\
	&+ \sum_{i=\Nncvxeq+1}^{\Nncvxineq+\Ncvxineq} \lambda_i (\nu_i^k - \omega_i^{k+1} + \omega_i^{k+1}) + c\,(z^{k+1} - z^k)
	\end{align*}
	\begin{align*}
	\Leftrightarrow \quad &\nabla g_0(z^{k+1}) + \sum_{i=1}^{e} \lambda_i \omega_i^{k+1} + \sum_{i=\Nncvxeq+1}^{\Nncvxineq+\Ncvxineq} \lambda_i \omega_i^{k+1} \\
	& \quad =\, \nabla g_0(z^k) - \nabla g_0(z^{k+1}) + \sum_{i=1}^{e} \lambda_i (\nu_i^k - \omega_i^{k+1})  \\
	& \qquad + \sum_{i=\Nncvxeq+1}^{\Nncvxineq+\Ncvxineq} \lambda_i (\nu_i^k - \omega_i^{k+1}) + c\,(z^{k+1} - z^k)
	\end{align*}
	The left-hand-side of the above equation is exactly $ \omega^{k+1} $ as in \cref{omega}, and apply~\cref{asup:lipschitz_grad} and~\cref{lem:diff_eq} and~\cref{lem:diff_ineq} to the right-hand-side, we get
	\begin{align*}
	\| \omega^{k+1} \| \leq & \, \| \nabla g_0(z^k) - \nabla g_0(z^{k+1}) \|  \\
	&+ \sum_{i=1}^{\Nncvxineq+\Ncvxineq} \lambda_i \| \nu_i^k - \omega_i^{k+1} \| + c\, \| z^{k+1} - z^k \|  \\
	\leq & \, (L_0 + \sum_{i=1}^{\Nncvxineq+\Ncvxineq} \lambda_i c_i + c) \| z^{k+1} - z^k \|.
	\end{align*}
	where $ c_i = (3 - 2\bar{\alpha}_i) L_i $ for $ i = 1, \ldots, e $ and $ c_i = (2 - \bar{\alpha}_i) L_i $ for $ i = \Nncvxeq+1, \ldots, \Nncvxineq+\Ncvxineq $, where $ \bar{\alpha}_i $ is the convex combination factor for $ \nu_i^k $.
	Let $ b = L_0 + \sum_{i=1}^{\Nncvxineq+\Ncvxineq} \lambda_i c_i + c $, we have \cref{eq:relative_error}.
\end{proof}

Now that we verified the strong convergence conditions, \cref{cond:H1} and \cref{cond:H2}, by using~\cite[Theorem~2.9]{attouch2013convergence} and the weak convergence results in the previous section~\ref{thm:global_converge}, we are in the position to claim the following:
\begin{theorem}[\textbf{Global Strong Convergence}] \label{thm:strong_converge}
	Suppose~\cref{asup:LICQ,asup:lipschitz_grad,asup:KL} hold, then regardless of initial conditions, the sequence $ \{z^k\} $ generated by \scvx algorithm (\cref{algo:SCvx}) always converges to a single limit point, $ \bar{z} $, and $ \bar{z} $ is a stationary point of the non-convex penalty~\cref{prob:scvx_penalty}. Furthermore, if $ \bar{z} $ is feasible for the original~\cref{prob:scvx_original}, then it is a KKT point of~\cref{prob:scvx_original}.
\end{theorem}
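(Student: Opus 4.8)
The plan is to assemble the statement from pieces that are already in hand: the weak convergence result \cref{thm:global_converge}, the two descent-type estimates established as \cref{cond:H1} and \cref{cond:H2}, and the abstract Kurdyka--{\L}ojasiewicz convergence machinery of \cite{attouch2013convergence}. First I would note that every iterate generated by \cref{algo:SCvx} lies in the compact set $Z = X\times U$, so the sequence $\{z^k\}$ is bounded; combined with \cref{asup:LICQ}, \cref{thm:global_converge} then already guarantees that $\{z^k\}$ possesses limit points and that each of them is a stationary point of the penalty \cref{prob:scvx_penalty}. The work that remains is to strengthen ``possesses limit points'' to ``converges to a single limit point.''

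For this upgrade I would invoke \cite[Theorem~2.9]{attouch2013convergence}, whose hypotheses are precisely the conditions H1, H2, H3 of \cite[Section~2.3]{attouch2013convergence}, together with the KL property of the objective and boundedness of the generated sequence. Condition H1 (sufficient decrease) is exactly \cref{cond:H1}, already proved; condition H2 (relative error) is exactly \cref{cond:H2}, already proved under \cref{asup:lipschitz_grad} via \cref{lem:diff_eq} and \cref{lem:diff_ineq}; condition H3 (the continuity condition, i.e.\ that along any subsequence $z^{k_j}\to\bar z$ one has $J(z^{k_j})\to J(\bar z)$) is a consequence of the local Lipschitz continuity of $J$ on the compact set $Z$ and was already used in the proof of \cref{thm:inf_converge}; boundedness of $\{z^k\}$ was established above; and the KL property of $J$ is \cref{asup:KL}. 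Applying \cite[Theorem~2.9]{attouch2013convergence} therefore yields that the whole sequence $\{z^k\}$ converges to a single point $\bar z$ with $0\in\partial J(\bar z)$, i.e.\ $\bar z\in S$ — which is exactly the stationarity claim for \cref{prob:scvx_penalty}.

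It then only remains to recover the KKT conclusion. If in addition $\bar z$ is feasible for the original \cref{prob:scvx_original}, I would simply quote the converse part of \cref{thm:exactness}: a stationary point of the penalty \cref{prob:scvx_penalty} that is feasible for \cref{prob:scvx_original} is a KKT point of \cref{prob:scvx_original}, so the first-order conditions \cref{eq:kkt} hold at $\bar z$, with the multipliers existing thanks to the LICQ \cref{asup:LICQ}. This closes the chain of implications.

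The main obstacle is not a computation but the bookkeeping required to view the \scvx recursion as a bona fide instance of the abstract descent scheme in \cite{attouch2013convergence}. One must argue that the rejection loop in line~\ref{line:reject} of \cref{algo:SCvx} terminates after finitely many trial steps at each succession — which is exactly what \cref{lem:main} provides — so that the \emph{accepted} iterates $z^1,z^2,\ldots$ constitute a genuine sequence on which \cref{cond:H1} and \cref{cond:H2} (both phrased for an accepted step $z^{k+1}$ from $z^k$) are valid, and that the trust-region update and the lower bound $r_l$ do not interfere with the monotone decrease used in H1. One should also dispatch the finite-termination branch ($\Delta J^k=0$, where \cref{algo:SCvx} returns $z^k$ and $z^k\in S$ by \cref{thm:finite_converge}) separately, since there the single-limit-point conclusion is immediate.
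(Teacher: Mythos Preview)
Your proposal is correct and follows essentially the same route as the paper: the paper's proof amounts to the single sentence ``Now that we verified the strong convergence conditions, \cref{cond:H1} and \cref{cond:H2}, by using~\cite[Theorem~2.9]{attouch2013convergence} and the weak convergence results in the previous section~\ref{thm:global_converge}, we are in the position to claim the following,'' so your assembly of H1, H2, H3, \cref{asup:KL}, boundedness, and the converse part of \cref{thm:exactness} is exactly what the authors intend. Your additional bookkeeping (finite termination of the rejection loop via \cref{lem:main}, the $\Delta J^k=0$ branch, the role of $r_l$) is more explicit than the paper itself, but in the same spirit.
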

	
\section{Superlinear Convergence Rate} \label{sec:superlinear}

In this section, we will show that the \scvx algorithm converges not only globally, but also superlinearly under some additional mild assumptions. Moreover, we will see that the superlinear rate of convergence is enabled by the structure of the underlying optimal control problem. In other words, the \scvx algorithm is specifically tailored to solve non-convex optimal control problems, and thus enjoys a faster convergence rate when compared to generic nonlinear programming methods, which often converge linearly, if at all.

First we note that at each succession of~\cref{algo:SCvx}, we are solving a convex programming problem, which is best solved using IPMs that employ self-dual embedding technique introduced by \cite{goldman1956theory}. A particular advantage of this approach is that it always produces a strictly complementary solution, thus satisfying the following assumption:
\begin{assumption}[\textbf{Strict Complementary Slackness}] \label{asup:SCS}
	In addition to the KKT conditions in~\cref{thm:kkt}, we assume that the following conditions are satisfied at the local optimum $ \bar{z} $,
	\begin{align*}
	\bar{\mu}_i &> 0, \quad \forall \, i \in I_{ac}(\bar{z}), \\
	\bar{\sigma}_j &> 0, \quad \forall \, j \in J_{ac}(\bar{z}).
	\end{align*}
\end{assumption}
The next assumption leverages the structure of optimal control problems, and is crucial in subsequent analysis.
\begin{assumption}[\textbf{Binding}] \label{asup:active}
	Let $ z^k \rightarrow \bar{z} $. There are at least $ \nc(\tf-1) $ binding constraints in $ g_{i}(\bar{z})\leq 0, i\in\ncvxineq$ and $ h_{j}(\bar{z})\leq 0, j\in\cvxineq$. That is,
	$$ |I_{ac}(\bar{z})|+|J_{ac}(\bar{z})| \geq \nc(\tf-1). $$
\end{assumption}
Optimal control problems often observes the \emph{bang-bang principle}, provided that the Hamiltonian is affine in controls, the control set is a convex polyhedron, and there are no \emph{singular arcs} \cite{sussmann1983lie}. Linear systems with these properties are referred as \emph{normal} (see Corollary 7.3 of \cite{berkovitz1974optimal}). For nonlinear systems, the non-singular condition can be checked by sequentially examine the \emph{Lie bracket} of the system dynamics (see Section 4.4.3 of \cite{liberzon2012calculus}). If the optimal control is indeed \emph{bang-bang}, then \cref{asup:active} is obviously satisfied. Once classic example is the \emph{bang-bang} solution obtained by solving a minimum time optimal control problem \cite{lasalle1953study}. The algorithm proposed in \cite{harris2014minimum} can also be used to ensure \emph{bang-bang} property of the solutions.

Note that even if some control constraints are inactive at the optimal solution, as long as there are an equal or greater number of active state constraints, then \cref{asup:active} still holds true. An interesting example of such a case is that of the maximum-divert planetary landing problem \cite{harris2014maximum} containing both control constraints and velocity constraints. In this example, the control constraints are inactive only when the velocity constraints are activated.

Nevertheless, given the special structure of optimal control problems and the properties of their solutions, we have the following lemma:
\begin{lemma} \label{lem:basis}
	Given~\cref{asup:LICQ,asup:active}, the gradient set of active constraints $ G_{ac}(\bar{z}) $ defined in~\cref{eq:G_ac} contains a basis of $ \real^\nz $, where $ \nz = \nx \tf + \nc(\tf-1) $. In other words, there are at least $ \nz $ linearly independent vectors in $ G_{ac}(\bar{z}) $.
\end{lemma}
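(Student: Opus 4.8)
The plan is to prove the lemma by a straightforward dimension count, using \cref{asup:LICQ} to convert a lower bound on the \emph{number} of active-constraint gradients into the stated spanning property. Recall that the optimization variable $z$ lives in $\real^\nz$ with $\nz = \nx\tf + \nc(\tf-1)$, where the first block of coordinates comes from the stacked states $x_1,\dots,x_\tf$ and the second from the stacked controls $u_1,\dots,u_{\tf-1}$.

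First I would count the columns of $G_{eq}(\bar{z})$: the discretized dynamics \cref{eq:o_ncvxeq} supply $\nx(\tf-1)$ scalar equality constraints, and the fixed boundary condition on the initial state $x_1$ supplies $\nx$ more active constraints at $\bar{z}$, so this block accounts for at least $\nx\tf$ columns of $G_{ac}(\bar{z})$. Next, by \cref{asup:active} the active inequality constraints satisfy $|I_{ac}(\bar{z})| + |J_{ac}(\bar{z})| \geq \nc(\tf-1)$, so $G_{ac,g}(\bar{z})$ and $G_{ac,h}(\bar{z})$ together contribute at least $\nc(\tf-1)$ additional columns. Adding the two contributions, $G_{ac}(\bar{z})$ has at least $\nx\tf + \nc(\tf-1) = \nz$ columns.

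To conclude, I would invoke \cref{asup:LICQ}: LICQ states precisely that the columns of $G_{ac}(\bar{z})$ are linearly independent. A family of at least $\nz$ linearly independent vectors in $\real^\nz$ must in fact consist of exactly $\nz$ vectors, and any such family is a basis of $\real^\nz$. Hence $G_{ac}(\bar{z})$ contains a basis of $\real^\nz$ (its columns in fact form one), which is the assertion of the lemma. As a side remark, this argument also shows that under \cref{asup:LICQ,asup:active} the active set at $\bar{z}$ has size exactly $\nz$, i.e. the binding inequality constraints number exactly $\nc(\tf-1)$.

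The only real obstacle is the bookkeeping in the count: one must be careful to tally the equality/active constraints consistently with the reformulation in \cref{prob:scvx_original}, and in particular not to overlook the $\nx$ constraints coming from the fixed initial state $x_1$ --- without them one reaches only $(\nx+\nc)(\tf-1)$ columns, which falls short of $\nz$ by exactly $\nx$. Everything else is immediate: no analytic estimates are required, and neither \cref{asup:lipschitz_grad} nor the exactness/stationarity machinery of \cref{sec:global} enters --- only the counting and the linear-independence statement supplied by LICQ.
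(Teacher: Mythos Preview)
Your argument is correct and follows essentially the same route as the paper's own proof: count at least $\nx\tf$ state-side constraints plus at least $\nc(\tf-1)$ active inequalities from \cref{asup:active}, then invoke \cref{asup:LICQ} to conclude that these $\nz$ gradients are linearly independent and hence span $\real^\nz$. The only difference is cosmetic: the paper attributes all $\nx\tf$ equalities to ``the system dynamics'' without further comment, whereas you explicitly split this as $\nx(\tf-1)$ dynamics constraints plus $\nx$ from the fixed initial state --- a clarification that is helpful, since the dynamics alone give only $\nx(\tf-1)$ scalar equations.
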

\begin{proof}
	From~\cref{asup:LICQ,asup:active}, we have at least $ \nc(\tf-1) $ linearly independent gradient vectors from active control constraints, In addition, from the formulation of the original optimal control problem, we know that there are at least $ \nx \tf $ equality constraints due to the system dynamics, and their gradient vectors are also linearly independent by~\cref{asup:LICQ}. Therefore, $ G_{ac}(\bar{z}) $ has at least $ \nz $ linearly independent vectors.
\end{proof}
Next, we prove several technical lemmas that are instrumental in obtaining the final convergence rate result. In this section, we will use $ \{z^k\} \rightarrow \bar{z} $ broadly to denote weak convergence (i.e. $ \{z^k\} $ is the subsequence converged to $ \bar{z} $), but if~\cref{asup:lipschitz_grad,asup:KL} are also satisfied, then the same notation means strong convergence (to a single limit point).
%
%
%
\begin{lemma} \label{lem:limit}
	Let $ \{y^k\} $ be a sequence in $ \real^\nz $ such that $ \{y^k\} \neq \bar{z} $ and $ \{y^k\}\rightarrow \bar{z} $. Then, there exists $ \xi \in \real^\nz, \|\xi\|=1 $, such that for any function $ g(\cdot) \in C^1: \, \real^\nz\rightarrow \real $, we have a subsequence denoted by $ k_{s}, s = 1, 2, \ldots $, such that
	\begin{equation} \label{eq:lim_sub}
	\lim\limits_{k_{s}\rightarrow \infty} \frac{g(y^{k_{s}})-g(\bar{z})}{\|y^{k_{s}} - \bar{z}\|} = \nabla g(\bar{z})^T \xi.
	\end{equation}
\end{lemma}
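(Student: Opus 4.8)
The plan is to extract the direction $\xi$ from the normalized difference sequence $\frac{y^k-\bar z}{\|y^k-\bar z\|}$, which lives on the unit sphere $S^{\nz-1}$, and then show that along the corresponding subsequence the limit in \cref{eq:lim_sub} holds for \emph{every} $C^1$ function $g$ simultaneously. The first step is to note that $\xi^k \definedas \frac{y^k-\bar z}{\|y^k-\bar z\|}$ is a well-defined sequence on the compact set $S^{\nz-1}$ (well-defined because $y^k \neq \bar z$), so by the Bolzano--Weierstrass theorem it has a convergent subsequence, indexed by $k_s$, with $\xi^{k_s}\rightarrow \xi$ for some $\xi$ with $\|\xi\|=1$. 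This choice of $\xi$ and of the subsequence $\{k_s\}$ depends only on $\{y^k\}$, not on $g$, which is exactly the ``uniform in $g$'' requirement of the statement.

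Next I would carry out the first-order Taylor expansion of $g$ at $\bar z$: since $g\in C^1$, we have $g(y^{k_s}) - g(\bar z) = \nabla g(\bar z)^T(y^{k_s}-\bar z) + \orderof(\|y^{k_s}-\bar z\|)$. Dividing by $\|y^{k_s}-\bar z\|$, which is positive, gives
\begin{equation*}
\frac{g(y^{k_s})-g(\bar z)}{\|y^{k_s}-\bar z\|} = \nabla g(\bar z)^T \xi^{k_s} + \frac{\orderof(\|y^{k_s}-\bar z\|)}{\|y^{k_s}-\bar z\|}.
\end{equation*}
As $k_s\rightarrow\infty$ we have $y^{k_s}\rightarrow\bar z$, so $\|y^{k_s}-\bar z\|\rightarrow 0$ and the remainder term vanishes by the definition of $\orderof(\cdot)$; meanwhile $\nabla g(\bar z)^T\xi^{k_s}\rightarrow\nabla g(\bar z)^T\xi$ by continuity of the inner product and $\xi^{k_s}\rightarrow\xi$. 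Hence the limit equals $\nabla g(\bar z)^T\xi$, which is \cref{eq:lim_sub}.

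The only subtlety worth flagging — and the main place care is needed — is the order of quantifiers: the subsequence $\{k_s\}$ must be chosen \emph{before} $g$ is given, so it cannot be allowed to depend on $g$. This is handled precisely because the subsequence is selected purely to make the spherical sequence $\{\xi^k\}$ converge; once that subsequence is fixed, the Taylor argument above works verbatim for any $C^1$ function $g$ without any further passage to a sub-subsequence. I expect no genuine obstacle here; the proof is short and the compactness of the unit sphere does all the work, with differentiability of $g$ supplying the rest.
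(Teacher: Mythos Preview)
Your proposal is correct and follows essentially the same approach as the paper: extract a convergent subsequence of the normalized directions $\xi^k=(y^k-\bar z)/\|y^k-\bar z\|$ via Bolzano--Weierstrass, then use first-order differentiability of $g$ to identify the limit as $\nabla g(\bar z)^T\xi$. The only cosmetic difference is that the paper splits the difference quotient and applies the mean value theorem explicitly rather than invoking the Taylor remainder as $\orderof(\|y^{k_s}-\bar z\|)$, but the content is the same, and your remark on the quantifier order (subsequence chosen independently of $g$) is exactly the point.
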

\begin{proof}
	Define $ \xi^k \definedas \frac{{y^k - \bar{z}}}{{\|y^k - \bar{z}\|}} $, such that $ \| \xi^k \| = 1 $. Clearly $ \{\xi^k\} $ is bounded, and by Bolzano-Weierstrass theorem (see e.g. \cite{rudin1964principles}), this sequence has a convergent subsequence $ \{\xi^{k_{s}}\} \rightarrow \xi $, i.e., the limit of the left hand side of \cref{eq:lim_sub} exists, and
	\begin{equation}
	\xi = \lim\limits_{k_{s}\rightarrow \infty} \xi^{k_{s}} = \lim\limits_{k_{s}\rightarrow \infty} \frac{y^{k_{s}}-\bar{z}}{\|y^{k_{s}} - \bar{z}\|},
	\label{eq:xi}
	\end{equation}
	and $ \|\xi\| = \|\xi^{k_{s}}\| = 1 $. Let $ y^{k_{s}} = \bar{z} + \omega^{k_{s}} \xi^{k_{s}} $, where $ \omega^{k_{s}} = \|y^{k_{s}} - \bar{z}\|, $ and $ \omega^{k_{s}}\rightarrow 0 $, as $ k_{s} \rightarrow \infty $. Then, we have
	\begin{equation*}
	\lim\limits_{k_{s}\rightarrow \infty} \frac{g(y^{k_{s}})-g(\bar{z})}{\|y^{k_{s}} - \bar{z}\|} = \lim\limits_{k_{s}\rightarrow \infty} \frac{g(\bar{z} + \omega^{k_{s}} \xi^{k_{s}})-g(\bar{z})}{\|\omega^{k_{s}} \xi^{k_{s}}\|}.
	\end{equation*}
	Let $ \theta^{k_{s}} \definedas \xi^{k_{s}} - \xi $, such that $ \theta^{k_{s}}\rightarrow 0 $ as $ k_{s} \rightarrow \infty $. Now we have
	\begin{align}
	\lim\limits_{k_{s}\rightarrow \infty} \frac{g(y^{k_{s}})-g(\bar{z})}{\|y^{k_{s}} - \bar{z}\|} 
	&= \lim\limits_{k_{s}\rightarrow \infty} \frac{g(\bar{z} + \omega^{k_{s}} \xi + \omega^{k_{s}} \theta^{k_{s}})-g(\bar{z})}{\omega^{k_{s}}} \notag \\
	&= \lim\limits_{k_{s}\rightarrow \infty} \frac{g(\bar{z} + \omega^{k_{s}} \xi + \omega^{k_{s}} \theta^{k_{s}})-g(\bar{z} + \omega^{k_{s}} \xi)}{\omega^{k_{s}}} \notag \\
	&\quad + \lim\limits_{k_{s}\rightarrow \infty} \frac{g(\bar{z} + \omega^{k_{s}} \xi)-g(\bar{z})}{\omega^{k_{s}}}. \label{eq:limit}
	\end{align}
	Since $ g(\cdot) \in C^1 $, we apply the mean value theorem, such that the first term of \cref{eq:limit} becomes
	\begin{equation*}
	\lim\limits_{k_{s}\rightarrow \infty} \frac{\omega^{k_{s}} {\theta^{k_{s}}}^T \nabla g(\bar{z} + \omega^{k_{s}} \xi + \omega^{k_{s}} \bar{\theta}^{k_{s}})}{\omega^{k_{s}}} = \lim\limits_{k_{s}\rightarrow \infty} \nabla g(\bar{z})^T \theta^{k_{s}} = 0,
	\end{equation*}
	where $ \bar{\theta}^{k_{s}} $ denotes a point that lies on the line segment between $ 0 $ and $ \theta^{k_{s}} $. Therefore, \cref{eq:limit} becomes
	\begin{equation*}
	\lim\limits_{k_{s}\rightarrow \infty} \frac{g(y^{k_{s}})-g(\bar{z})}{\|y^{k_{s}} - \bar{z}\|} = \lim\limits_{k_{s}\rightarrow \infty} \frac{g(\bar{z} + \omega^{k_{s}} \xi)-g(\bar{z})}{\omega^{k_{s}}} = \nabla g(\bar{z})^T \xi,
	\end{equation*}
	by definition of the directional derivative.
\end{proof}

\begin{lemma} \label{lem:growth}
	Let $ \{z^k\} $ be the convergent sequence generated by the \scvx algorithm, and $ \{z^k\} \rightarrow \bar{z} $. Assume $ \bar{z} $ is feasible to the original problem,~\cref{prob:scvx_original}, then under~\cref{asup:LICQ,asup:SCS,asup:active}, there exist $ \beta>0 $ and $ \delta>0 $ such that
	$$ \forall z \in N(\bar{z}, \delta) \definedas \left\lbrace z \, | \, \|z - \bar{z}\| \leq \delta\right\rbrace, $$
	we have
	\begin{equation}
	J(z) - J(\bar{z}) \geq \beta \|z - \bar{z}\|, \label{eq:growth}
	\end{equation}
	where $ J(z) $ is the penalty cost defined in~\cref{eq:pen_cost}.
\end{lemma}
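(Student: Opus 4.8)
The plan is to prove \cref{eq:growth} by contradiction, showing that its failure would force the existence of a unit direction $\xi \in \real^\nz$ orthogonal to every active constraint gradient at $\bar z$, which \cref{lem:basis} forbids. Suppose \cref{eq:growth} fails for every $\beta>0$ and $\delta>0$; taking $\beta=\delta=1/k$ produces a sequence $y^k\to\bar z$ with $y^k\neq\bar z$ and $\bigl(J(y^k)-J(\bar z)\bigr)/\|y^k-\bar z\|<1/k$, so $\limsup_k \bigl(J(y^k)-J(\bar z)\bigr)/\|y^k-\bar z\|\le 0$. The sequence $\{y^k\}$ meets the hypotheses of \cref{lem:limit}; extracting the common subsequence along which $(y^k-\bar z)/\|y^k-\bar z\|$ converges to some unit vector $\xi$ (which we relabel as $\{y^k\}$), the conclusion of \cref{lem:limit} holds along it simultaneously for $g_0$ and for each of the finitely many constraint functions $g_i$, $i\in\ncvxeq\cup\ncvxineq$, and $h_j$, $j\in\cvxineq$.

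Next I would take the limit of the difference quotient of $J$ along this subsequence. Since $\bar z$ is feasible, $J(\bar z)=g_0(\bar z)$ by \cref{eq:pen_cost}, and the penalty terms of the inactive inequality constraints vanish near $\bar z$ by continuity, hence contribute nothing. Dividing $J(y^k)-J(\bar z)$ by $\|y^k-\bar z\|$, applying \cref{lem:limit} term by term, and using continuity of $|\cdot|$ and $\max(0,\cdot)$, the limit is
\begin{align*}
\ell = {} & \nabla g_0(\bar z)^T\xi + \sum_{i\in\ncvxeq}\lambda_i\,\bigl|\nabla g_i(\bar z)^T\xi\bigr| \\
& + \sum_{i\in I_{ac}(\bar z)}\lambda_i\,\max\bigl(0,\nabla g_i(\bar z)^T\xi\bigr) + \sum_{j\in J_{ac}(\bar z)}\tau_j\,\max\bigl(0,\nabla h_j(\bar z)^T\xi\bigr),
\end{align*}
and $\ell\le 0$ by the previous paragraph.

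Since $\bar z$ is a feasible limit point, \cref{thm:global_converge} makes it a KKT point of \cref{prob:scvx_original}; I would substitute $\nabla g_0(\bar z)$ from \cref{eq:kkt} into the expression for $\ell$ and regroup constraint by constraint. Each resulting summand has the form $\lambda_i|t|-\bar\mu_i t$, $\lambda_i\max(0,t)-\bar\mu_i t$, or $\tau_j\max(0,t)-\bar\sigma_j t$ with $t$ the relevant directional derivative; since the penalty weights are chosen large enough for exactness ($\lambda_i>|\bar\mu_i|$, $\tau_j>|\bar\sigma_j|$, cf.\ \cref{thm:exactness}) and $\bar\mu_i>0$ for $i\in I_{ac}(\bar z)$, $\bar\sigma_j>0$ for $j\in J_{ac}(\bar z)$ by strict complementarity (\cref{asup:SCS}), every such summand is nonnegative and equals $0$ only when its $t$ vanishes. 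Hence $\ell\ge 0$, so $\ell=0$, which forces $\nabla g_i(\bar z)^T\xi=0$ for all $i\in\ncvxeq\cup I_{ac}(\bar z)$ and $\nabla h_j(\bar z)^T\xi=0$ for all $j\in J_{ac}(\bar z)$, i.e.\ $G_{ac}(\bar z)^T\xi=0$. By \cref{lem:basis}, \cref{asup:LICQ} and \cref{asup:active} make the columns of $G_{ac}(\bar z)$ span $\real^\nz$, so $\xi=0$, contradicting $\|\xi\|=1$; therefore \cref{eq:growth} must hold for some $\beta,\delta>0$.

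I expect the sign analysis in the last step to be the crux of the argument: eliminating $\nabla g_0(\bar z)$ leaves nonsmooth penalty terms that must be shown nonnegative and tight only along directions orthogonal to the corresponding gradient, and this is precisely where exactness of the penalty weights and, critically, strict complementarity (\cref{asup:SCS}) are indispensable — without strict complementarity an active inequality with zero multiplier would fail to penalize the ``infeasible-side'' direction $t<0$, and the deduction $G_{ac}(\bar z)^T\xi=0\Rightarrow\xi=0$ could collapse even though \cref{lem:basis} still supplies enough gradients. A minor technical point is ensuring a single subsequence serves all finitely many constraint functions at once, which is automatic because the subsequence in \cref{lem:limit} is simply the one along which $(y^k-\bar z)/\|y^k-\bar z\|$ converges.
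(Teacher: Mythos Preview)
Your proposal is correct and follows essentially the same route as the paper's proof: contradiction via a sequence $y^k\to\bar z$ violating the growth bound, extraction of a unit direction $\xi$ via \cref{lem:limit}, passage to the limit in the normalized penalty quotient, subtraction of the KKT identity \cref{eq:kkt}, and use of exactness plus strict complementarity to force $G_{ac}(\bar z)^T\xi=0$, contradicting \cref{lem:basis}. Your explicit remarks about disposing of inactive inequality terms by continuity and about a single subsequence serving all finitely many $C^1$ functions are points the paper leaves implicit, but the argument is otherwise the same.
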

\begin{proof}
	We will prove by contradiction. Assume the statement is false. It means that for a given diminishing sequence $ \{\varepsilon^k\}\rightarrow 0,\; k=1,2,\,\ldots\, $, there exists sequence $ \{y^k\} \neq \bar{z} $ and $ \{y^k\}\rightarrow \bar{z},\; k=1,2,\,\ldots\, $, such that
	\begin{equation}
	J(y^k) - J(\bar{z}) \leq \varepsilon^k \|y^k - \bar{z}\|. \label{eq:y^k}
	\end{equation}
	Now let $ \{y^{k_{s}}\} $ denote a subsequence of $ \{y^k\} $, such that \cref{eq:xi} holds and $ \|\xi\|=1 $.
	Since $ \bar{z} $ is assumed to be feasible, we have $ J(\bar{z})=g_0(\bar{z}) $. Therefore, \cref{eq:y^k} with respect to $ k_{s} $ becomes
	\begin{align*}
	&g_0(y^{k_{s}}) + \sum_{i\in\ncvxeq}\lambda_i |g_{i}(y^{k_{s}})| + \sum_{i\in\ncvxineq}\lambda_i \max\big(0,g_{i}(y^{k_{s}})\big) \\
	&+ \sum_{j\in\cvxineq}\tau_j \max\big(0,h_{j}(y^{k_{s}})\big) - g_0(\bar{z}) \leq \varepsilon^{k_{s}} \|y^{k_{s}} - \bar{z}\|,
	\end{align*}
	which can be rewritten as
	\begin{align*}
	&g_0(y^{k_{s}}) - g_0(\bar{z}) + \sum_{i\in\ncvxeq}\lambda_i |g_{i}(y^{k_{s}})-g_{i}(\bar{z})| + \underset{i\in I_{ac}(\bar{z})}{\sum}\lambda_i \max\big(0,g_{i}(y^{k_{s}})-g_{i}(\bar{z})\big) \\
	&+ \underset{j\in J_{ac}(\bar{z})}{\sum}\tau_j \max\big(0,h_{j}(y^{k_{s}})-h_{j}(\bar{z})\big) \leq \varepsilon^{k_{s}} \|y^{k_{s}} - \bar{z}\|.
	\end{align*}
	Dividing both sides by $ \|y^{k_{s}} - \bar{z}\| $, we have
	\begin{align*}
	&\frac{g_0(y^{k_{s}})-g_0(\bar{z})}{\|y^{k_{s}} - \bar{z}\|} + \frac{\underset{i\in\ncvxeq}{\sum}\lambda_i |g_{i}(y^{k_{s}})-g_{i}(\bar{z})|}{\|y^{k_{s}} - \bar{z}\|} + \frac{\underset{i\in I_{ac}(\bar{z})}{\sum}\lambda_i \max\big(0,g_{i}(y^{k_{s}})-g_{i}(\bar{z})\big)}{\|y^{k_{s}} - \bar{z}\|} \\
	&+ \frac{\underset{j\in J_{ac}(\bar{z})}{\sum}\tau_j \max\big(0,h_{j}(y^{k_{s}})-h_{j}(\bar{z})\big)}{\|y^{k_{s}} - \bar{z}\|} \leq \varepsilon^{k_{s}}.
	\end{align*}
	Let $ k_s \rightarrow \infty $, then by \cref{lem:limit} and the definition of $ \xi $ from \cref{eq:xi}, we have
	\begin{align*}
	\nabla g_0(\bar{z})^T \xi &+ \sum_{i\in\ncvxeq} \lambda_i |\nabla g_{i}(\bar{z})^T \xi| + \underset{i\in I_{ac}(\bar{z})}{\sum} \lambda_i \max\big(0,\nabla g_{i}(\bar{z})^T \xi\big) \\
	&+ \underset{j\in J_{ac}(\bar{z})}{\sum} \tau_j \max\big(0,\nabla h_{j}(\bar{z})^T \xi\big) \leq 0.
	\end{align*}
	Now recall the KKT condition in \cref{thm:kkt}. We subtracting the product of \cref{eq:kkt} and $ \xi $ from the above equation, we have
	\begin{align*}
	&\sum_{i\in\ncvxeq} \Big[ \lambda_i |\nabla g_{i}(\bar{z})^T \xi| - \bar{\mu}_i \nabla g_i(\bar{z})^T \xi \Big] + \underset{i\in I_{ac}(\bar{z})}{\sum} \Big[ \lambda_i \max (0,\nabla g_{i}(\bar{z})^T \xi) - \bar{\mu}_i \nabla g_i(\bar{z})^T \xi \Big] \\
	&+ \underset{j\in J_{ac}(\bar{z})}{\sum} \Big[ \tau_j \max (0,\nabla h_{j}(\bar{z})^T \xi) - \bar{\sigma}_j \nabla h_j(\bar{z})^T \xi \Big] \leq 0,
	\end{align*}
	where $ \bar{\mu}_i $ and $ \bar{\sigma}_j $ are Lagrange multipliers associated with constraints.
	Due to the exactness property in \cref{thm:exactness}, these three terms are all nonnegative, and by the strict complementary slackness property in \cref{asup:SCS}, we have
	\begin{align*}
	&\nabla g_{i}(\bar{z})^T \xi = 0, \quad \forall \, i \in \ncvxeq \cup I_{ac}(\bar{z}), \\
	&\nabla h_{j}(\bar{z})^T \xi = 0, \quad \forall \, j \in J_{ac}(\bar{z}),
	\end{align*}
	and therefore,
	\begin{equation*}
	\big[ \nabla g_{i}(\bar{z})^T, \, \nabla h_{j}(\bar{z})^T \big]\, \xi = 0, \quad \forall \, i \in\ncvxeq \cup I_{ac}(\bar{z}), \, j \in J_{ac}(\bar{z}).
	\end{equation*}
	However, by~\cref{lem:basis} we know that the column space of $ [ \nabla g_{i}(\bar{z})^T, \, \nabla h_{j}(\bar{z})^T ] $ contains a basis of $ \real^\nz $. Since $ \xi \in \real^\nz $, this implies that $ \xi = 0 $, which contradicts the fact that $ \| \xi \| = 1 $, and thus \cref{eq:growth} holds true.
\end{proof}

\cref{lem:growth} provides an important condition that is satisfied by many optimal control problems. Next, we only need to show that given this condition, the \scvx procedure will indeed converge superlinearly. To proceed, let us first denote the stack of $ g_0(\cdot), g_i(\cdot) $, and $ h_j(\cdot) $ as $ G(\cdot) \in C^1 $, and represent $ J(\cdot) $ by a function composition $ \psi(G(\cdot)) $, and qualitatively we can write
\begin{equation*}
\psi(G(\cdot)) = G_{cost}(\cdot) + |G_{eq}(\cdot)| + \max\left( 0, G_{ineq}(\cdot)\right).
\end{equation*}
Note that $ \psi(\cdot) $ is convex since both $ |\cdot| $ and $ \max(0, \cdot) $ are convex functions.

\begin{lemma}
	Under the assumptions of~\cref{lem:growth}, there exists $ \gamma >0 $, such that
	\begin{equation} \label{eq:ineq_critical}
	\psi\left( G(\bar{z}) + \nabla G(\bar{z})^T \, d \right) \geq \psi(G(\bar{z})) + \gamma \| d \|, \quad \forall \, d \in \real^\nz.
	\end{equation}
\end{lemma}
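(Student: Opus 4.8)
The plan is to transfer the linear growth bound of \cref{lem:growth}, which holds for $J(z)=\psi(G(z))$ along the \emph{nonlinear} manifold $z\mapsto G(z)$ near $\bar z$, to the \emph{linearized} cost $\psi\bigl(G(\bar z)+\nabla G(\bar z)^T d\bigr)$. The essential point is that $\psi$ is convex and positively homogeneous-like in its non-smooth pieces, so the inequality \cref{eq:ineq_critical}, being homogeneous of degree one in $d$, need only be checked on the unit sphere $\|d\|=1$ and then extended by scaling. Concretely, I would first argue by contradiction: if no such $\gamma>0$ exists, then there is a sequence of unit vectors $d^k$, $\|d^k\|=1$, with $\psi\bigl(G(\bar z)+\nabla G(\bar z)^T d^k\bigr) - \psi(G(\bar z)) \to 0$. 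By Bolzano--Weierstrass, pass to a subsequence $d^{k_s}\to \xi$ with $\|\xi\|=1$, and by continuity of $\psi$ and of $\nabla G(\bar z)$ obtain $\psi\bigl(G(\bar z)+\nabla G(\bar z)^T \xi\bigr)=\psi(G(\bar z))$.

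Next I would write this limiting equality out componentwise using the feasibility of $\bar z$ (so $G_{eq}(\bar z)=0$ and $G_{ineq}(\bar z)\le 0$, with the inactive inequalities contributing nothing to $\psi$ near $\bar z$), exactly mirroring the expansion carried out in the proof of \cref{lem:growth}. This yields
\begin{equation*}
\nabla g_0(\bar z)^T\xi + \sum_{i\in\ncvxeq}\lambda_i |\nabla g_i(\bar z)^T\xi| + \sum_{i\in I_{ac}(\bar z)}\lambda_i \max\bigl(0,\nabla g_i(\bar z)^T\xi\bigr) + \sum_{j\in J_{ac}(\bar z)}\tau_j \max\bigl(0,\nabla h_j(\bar z)^T\xi\bigr) \le 0,
\end{equation*}
since $\psi$ evaluated at the linearization is no smaller than its first-order term, and the constant part equals $\psi(G(\bar z))$. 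From here the argument is verbatim the second half of the proof of \cref{lem:growth}: subtract $\xi^T$ times the KKT identity \cref{eq:kkt}, invoke exactness (\cref{thm:exactness}) to conclude each bracketed term is nonnegative, use strict complementary slackness (\cref{asup:SCS}) to force $\nabla g_i(\bar z)^T\xi=0$ for $i\in\ncvxeq\cup I_{ac}(\bar z)$ and $\nabla h_j(\bar z)^T\xi=0$ for $j\in J_{ac}(\bar z)$, and then apply \cref{lem:basis} to conclude $\xi=0$, contradicting $\|\xi\|=1$. Hence $\gamma>0$ exists for $\|d\|=1$, and for general $d\ne 0$ we scale: writing $d=\|d\|\,\hat d$ with $\|\hat d\|=1$, the quantity $\psi\bigl(G(\bar z)+\nabla G(\bar z)^T d\bigr)-\psi(G(\bar z))$ is, term by term, homogeneous of degree one in $d$ — the smooth part $\nabla g_0(\bar z)^T d$ is linear, and $|\nabla g_i(\bar z)^T d|$, $\max(0,\nabla g_i(\bar z)^T d)$ are positively homogeneous — so it equals $\|d\|$ times the value at $\hat d$, which is $\ge \gamma$. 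The case $d=0$ is trivial.

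The main obstacle I anticipate is the exact homogeneity/scaling step: one must be careful that the \emph{inactive} inequality constraints (those with $g_i(\bar z)<0$ or $h_j(\bar z)<0$) do not spoil homogeneity, since $\max\bigl(0,g_i(\bar z)+\nabla g_i(\bar z)^T d\bigr)$ is \emph{not} homogeneous in $d$. The resolution is that for $\|d\|$ small these terms vanish identically, so the degree-one lower bound holds in a neighborhood of $0$; combined with convexity of $d\mapsto\psi\bigl(G(\bar z)+\nabla G(\bar z)^T d\bigr)$, a bound of the form $\psi(\cdot)\ge\psi(G(\bar z))+\gamma\|d\|$ valid for small $\|d\|$ automatically propagates to all $d\in\real^\nz$ — any secant line from the minimizer-region lies below the graph, so the slope bound only improves as $\|d\|$ grows. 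Thus the inactive constraints, if anything, help. The rest is a direct reuse of machinery already assembled in \cref{lem:growth}.
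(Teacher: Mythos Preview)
Your approach is genuinely different from the paper's and the core idea is sound, but there is a real gap in the scaling step that your final paragraph does not quite close.

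\textbf{The gap.} Write $\Phi(d)\definedas\psi\bigl(G(\bar z)+\nabla G(\bar z)^T d\bigr)-\psi(G(\bar z))$ and let $\Phi_0(d)$ denote the same quantity with the \emph{inactive} inequality terms dropped. Your unit-sphere contradiction establishes $\Phi(d)\ge\gamma$ for $\|d\|=1$, and you correctly observe that $\Phi_0$ is positively homogeneous of degree one while $\Phi$ is not. The trouble is that $\Phi=\Phi_0+(\text{nonnegative inactive terms})$, so $\Phi\ge\gamma$ on the sphere does \emph{not} give $\Phi_0\ge\gamma$ there; the inactive terms may be carrying part of the $\gamma$. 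Consequently your claim that ``the degree-one lower bound holds in a neighborhood of $0$'' is unproven: for small $d$ you have $\Phi(d)=\Phi_0(d)=\|d\|\,\Phi_0(\hat d)$, but you have no lower bound on $\Phi_0(\hat d)$. The convexity-propagation step you describe is correct once a small-$d$ bound is in hand, but that bound is precisely what is missing.

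\textbf{The fix.} Run your contradiction directly on $\Phi_0$ rather than on $\Phi$. The displayed inequality you derive (after subtracting the KKT identity) is exactly $\Phi_0(\xi)\le 0$, so the very same argument via \cref{thm:exactness}, \cref{asup:SCS}, and \cref{lem:basis} yields $\inf_{\|\hat d\|=1}\Phi_0(\hat d)=\gamma>0$. Homogeneity then gives $\Phi_0(d)\ge\gamma\|d\|$ for all $d$, and since $\Phi\ge\Phi_0$ you are done, with no need for the convexity-propagation step at all. (A minor related point: your contradiction sequence should be phrased so that the limit satisfies $\Phi(\xi)\le 0$, not $=0$; the infimum could a priori be negative.)

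\textbf{Comparison with the paper.} The paper takes a shorter route: it uses \cref{lem:growth} as a black box, writing $\Phi(d)=\bigl[J(\bar z+d)-J(\bar z)\bigr]+\orderof(\|d\|)\ge\beta\|d\|+\orderof(\|d\|)\ge(\beta/2)\|d\|$ for small $d$, and then extends to all $d$ by convexity of $\psi$ (exactly the propagation mechanism you describe). Your approach instead re-runs the KKT/strict-complementarity/basis machinery of \cref{lem:growth} directly on the linearized functional. The paper's route is more economical since it reuses \cref{lem:growth}; yours is self-contained and makes the role of homogeneity explicit, at the cost of duplicating that argument.
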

\begin{proof}
	First we show that the statement is true for any small step $ d_\delta $ such that $ \|d_\delta\| \leq \delta $, where $ \delta $ is defined in~\cref{lem:growth}. We have
	\begin{align*}
	\psi\left( G(\bar{z}) + \nabla G(\bar{z})^T \, d_\delta \right) - \psi(G(\bar{z})) = \; &\psi(G(\bar{z}+d_\delta)) - \psi(G(\bar{z})) \\
	&+ \psi\left( G(\bar{z}) + \nabla G(\bar{z})^T \, d_\delta \right) - \psi(G(\bar{z}+d_\delta)) \\
	\geq \;&\beta \|d_\delta\| + \orderof(\|d_\delta\|) \\
	\geq \;&\frac{\beta}{2} \|d_\delta\|.
	\end{align*}
	Let $ \gamma \definedas \frac{\beta}{2} $, then we have \cref{eq:ineq_critical} hold for a small step $ d_\delta $.
	Note that the first inequality is due to~\cref{lem:growth} and again the fact that $ \orderof(\|d_\delta\|) $ can be taken out of $ |\cdot| $ and $ \max(0,\,\cdot\,) $ and $ G(\cdot) \in C^1 $.
	
	Now to generalize this result to any $ d \in \real^\nz, d \neq 0 $, we first define
	$$ \zeta \definedas \min\left( 1, \delta/\|d\| \right), $$
	and let $z_\zeta \definedas \bar{z} + \zeta d $. Denoting $ (z_\zeta - \bar{z}) $ as $ d_\zeta $, we have
	$ d_\zeta = \zeta \, d $.
	With this definition of $ \zeta $, one can verify that $ \|d_\zeta\|\leq \delta $. Hence,~\cref{eq:ineq_critical} holds true for $ d_\zeta $. Therefore, we have
	\begin{align*}
	\gamma \zeta \|d\| = \gamma \|d_\zeta\| &\leq \psi\left( G(\bar{z}) + \nabla G(\bar{z})^T \, d_\zeta \right) - \psi(G(\bar{z})) \\
	&= \psi\left( G(\bar{z}) + \zeta \nabla G(\bar{z})^T \, d \right) - \psi(G(\bar{z})) \\
	&= \psi\left( (1-\zeta)G(\bar{z}) + \zeta\left[ G(\bar{z}) + \nabla G(\bar{z})^T \, d \right] \right) - \psi(G(\bar{z})) \\
	&\leq (1-\zeta)\psi(G(\bar{z})) + \zeta \psi\left(G(\bar{z}) + \nabla G(\bar{z})^T \, d \right) - \psi(G(\bar{z})) \\
	&=\zeta \left[ \psi\left(G(\bar{z}) + \nabla G(\bar{z})^T \, d \right) - \psi(G(\bar{z})) \right].
	\end{align*}
	The second inequality is due to the convexity of $ \psi $ (note that $ 0 \leq \zeta \leq 1 $). Dividing both sides by $ \zeta $, we obtain~\cref{eq:ineq_critical}.
\end{proof}

\begin{theorem} [\textbf{Superlinear Convergence}] \label{thm:superlinear}
	Given a sequence $ \{z^k\} \rightarrow \bar{z} $ generated by the \scvx algorithm (\cref{algo:SCvx}) and suppose that \cref{asup:LICQ,asup:SCS,asup:active} are satisfied, then we have
	\begin{equation}
	\|z^{k+1} - \bar{z}\| = \orderof(\|z^{k} - \bar{z}\|).
	\end{equation}
\end{theorem}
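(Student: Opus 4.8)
The plan is to contrast an upper bound on $J(z^{k+1})$ coming from the optimality of the \scvx subproblem with the lower bound $J(z^{k+1})-J(\bar z)\ge\beta\,\|z^{k+1}-\bar z\|$ furnished by~\cref{lem:growth}, and to show the slack between the two is $\orderof(\|z^k-\bar z\|)$. Throughout set $d^k\definedas z^{k+1}-z^k$; from the convergence results~\cref{thm:global_converge,thm:strong_converge} we have $\|z^k-\bar z\|\to0$ and hence $\|d^k\|\to0$, and (as in~\cref{lem:growth}) $\bar z$ is feasible for~\cref{prob:scvx_original}. I will also use repeatedly that $\psi$ is globally Lipschitz — being a finite nonnegative combination of the $1$-Lipschitz maps $|\cdot|$, $\max(0,\cdot)$ and the identity — and that $L^k(d)=\psi\big(G(z^k)+\nabla G(z^k)^T d\big)$ while $J(z)=\psi\big(G(z)\big)$.

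The first step is the auxiliary estimate $\|d^k\|=\orderof(\|z^k-\bar z\|)$ for all large $k$. Evaluating~\cref{eq:ineq_critical} at $d=d^k$ gives $\psi\big(G(\bar z)+\nabla G(\bar z)^T d^k\big)\ge J(\bar z)+\gamma\,\|d^k\|$; transferring the left-hand side from $\bar z$ to $z^k$ changes it by at most $\orderof(\|z^k-\bar z\|)+\omega_k\|d^k\|$, where $\omega_k\definedas\|\nabla G(z^k)-\nabla G(\bar z)\|\to0$, because $\psi$ is Lipschitz and $G\in C^1$. On the other hand $d^k$ minimizes $L^k$ over the trust region, so $L^k(d^k)\le L^k(0)=J(z^k)\le J(\bar z)+\orderof(\|z^k-\bar z\|)$ by local Lipschitzness of $J$. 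Comparing the two and absorbing $\omega_k\|d^k\|$ on the left yields $\|d^k\|=\orderof(\|z^k-\bar z\|)$.

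The second step is to verify that, for large $k$, the displacement $\bar z-z^k$ is a feasible point of the subproblem that actually produces $z^{k+1}$ — i.e.\ that no step is rejected near $\bar z$. On the first solve of succession $k$ the radius satisfies $r^k\ge r_l>\|\bar z-z^k\|$ once $k$ is large, so $\bar z-z^k$ is feasible there and, using optimality together with the first-order expansion of $G$ at $z^k$ and~\cref{lem:growth},
\[
\Delta L^k\;\ge\;J(z^k)-L^k(\bar z-z^k)\;\ge\;\beta\,\|z^k-\bar z\|-\orderof(\|z^k-\bar z\|)\;\ge\;\tfrac{\beta}{2}\,\|z^k-\bar z\|.
\]
Since~\cref{eq:taylor} gives $\rho^k=1-\orderof(\|d^k\|)/\Delta L^k$ and $\|d^k\|=\orderof(\|z^k-\bar z\|)$ by the first step, $\rho^k\to1>\rho_0$, so the first step is accepted and $d^k$ is the minimizer of $L^k$ over $\|d\|\le r^k$ with $r^k\ge r_l>\|\bar z-z^k\|$.

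Now combine. By~\cref{eq:taylor}, optimality of $d^k$, feasibility of $\bar z-z^k$, and the first-order expansion of $G$ at $z^k$ (whose remainder is $\orderof(\|z^k-\bar z\|)$ uniformly as $z^k\to\bar z$),
\[
J(z^{k+1})\;\le\;L^k(d^k)+\orderof(\|d^k\|)\;\le\;L^k(\bar z-z^k)+\orderof(\|d^k\|)\;=\;J(\bar z)+\orderof(\|z^k-\bar z\|),
\]
where in the last equality I used $\psi\big(G(\bar z)\big)=J(\bar z)$, Lipschitzness of $\psi$, and $\orderof(\|d^k\|)=\orderof(\|z^k-\bar z\|)$. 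Pairing this with $J(z^{k+1})-J(\bar z)\ge\beta\,\|z^{k+1}-\bar z\|$ from~\cref{lem:growth} (valid once $z^{k+1}\in N(\bar z,\delta)$) gives $\beta\,\|z^{k+1}-\bar z\|\le\orderof(\|z^k-\bar z\|)$, i.e.\ $\|z^{k+1}-\bar z\|=\orderof(\|z^k-\bar z\|)$. I expect the real obstacle to be the first two steps: the bound $\|d^k\|=\orderof(\|z^k-\bar z\|)$ and the ``no rejection near $\bar z$'' claim both rest on the sharpness of the linearized penalty encoded in~\cref{eq:ineq_critical}, which comes from~\cref{asup:active,asup:SCS} via~\cref{lem:basis,lem:growth}; without it the linearized cost could be flat along the trust-region boundary and $d^k$ could overshoot $\bar z$, leaving only linear convergence. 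The remaining bookkeeping of the $\orderof(\cdot)$ remainders — in particular their uniformity in $z^k$ — is routine.
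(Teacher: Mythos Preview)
Your argument is correct and reaches the conclusion, but the route differs from the paper's. One notational slip: in your Step~1 the bounds coming from Lipschitzness of $J$ and from $\|G(\bar z)-G(z^k)\|$ are $O(\|z^k-\bar z\|)$, not $o$, so what you actually establish there is $\|d^k\|=O(\|z^k-\bar z\|)$. Fortunately big-$O$ is all the final step needs, since then $o(\|d^k\|)=o(\|z^k-\bar z\|)$.

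The paper works entirely at the linearized level and never passes through $J(z^{k+1})$. It applies~\cref{eq:ineq_critical} with $d=z^{k+1}-\bar z$, adds the nonnegative gap $L^k(\bar z-z^k)-L^k(d^k)\ge0$ (optimality of $d^k$ against the competitor $\bar z-z^k$), and uses Lipschitz continuity of $\psi$ to transfer between the linearizations at $\bar z$ and at $z^k$. This yields directly
\[
\gamma\,\|z^{k+1}-\bar z\|\;\le\;2L\,\bigl\|G(\bar z)-G(z^k)-\nabla G(z^k)^T(\bar z-z^k)\bigr\|\;+\;L\,\|\nabla G(\bar z)-\nabla G(z^k)\|\cdot\|z^{k+1}-\bar z\|,
\]
whose first term is $o(\|z^k-\bar z\|)$ and whose second is $o(1)\cdot\|z^{k+1}-\bar z\|$ and is absorbed on the left. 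This bypasses your Step~1 entirely. Your sandwich of $J(z^{k+1})$ between the growth bound of~\cref{lem:growth} and the linearized upper bound is arguably more transparent about where the sharpness enters, but it costs you the auxiliary estimate on $\|d^k\|$. The trust-region handling is essentially the same in both proofs: the paper argues $\rho^k\to1$ so $r^k$ stops shrinking while $d^k\to0$, hence the constraint is eventually slack; you make the same point quantitatively via $r^k\ge r_l>\|\bar z-z^k\|$ together with your Step~1 bound on $\|d^k\|$.
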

\begin{proof}
	First consider the case without trust region constraints. Since $ d^k \definedas z^{k+1} - z^k $ is the unconstrained optimal solution to the convex subproblem, we have $ \forall \, d \in \real^\nz $,
	$$ \psi\big( G(z^k) + \nabla G(z^k)^T\,d \big) \geq \psi\big( G(z^k) + \nabla G(z^k)^T\,d^k \big). $$
	Let $ d = \bar{z} - z^k $. Then we have,
	\begin{equation*}
	\psi\left( G(z^k) + \nabla G(z^k)^T\,(\bar{z} - z^k) \right) - \psi\left( G(z^k) + \nabla G(z^k)^T\,(z^{k+1} - z^k) \right) \geq 0.
	\end{equation*}
	Together with~\cref{eq:ineq_critical}, we have
	\begin{align*}
	\gamma \|z^{k+1} - \bar{z}\| \leq \;&\psi\left( G(\bar{z}) + \nabla G(\bar{z})^T\,(z^{k+1} - \bar{z})  \right) - \psi\big(G(\bar{z})\big) \\
	&+ \psi\left( G(z^k) + \nabla G(z^k)^T\,(\bar{z} - z^k) \right) \\
	&- \psi\left( G(z^k) + \nabla G(z^k)^T\,(z^{k+1} - z^k) \right).
	\end{align*}
	Since $ \psi $ is convex and has a compact domain, it is (locally) Lipschitz continuous (see e.g. \cite{convex_lipschitz}). Therefore, we have
	\begin{equation*}
	\psi\left( G(z^k) + \nabla G(z^k)^T\,(\bar{z} - z^k) \right) - \psi\big(G(\bar{z})\big) \leq L \|G(z^k) - G(\bar{z}) + \nabla G(z^k)^T\,(\bar{z} - z^k) \|,
	\end{equation*}
	where $ L $ is the Lipschitz constant, and 
	\begin{align*}
	&\psi\left( G(\bar{z}) + \nabla G(\bar{z})^T\,(z^{k+1} - \bar{z})  \right) - \psi\left( G(z^k) + \nabla G(z^k)^T\,(z^{k+1} - z^k) \right) \\
	&\quad \leq L \|G(\bar{z}) - G(z^k) - \nabla G(z^k)^T\,(\bar{z} - z^k) + \left[ \nabla G(\bar{z})-\nabla G(z^k)\right]^T \left( z^{k+1} - \bar{z}\right) \| \\
	&\quad\leq L \|G(\bar{z}) - G(z^k) - \nabla G(z^k)^T\,(\bar{z} - z^k) \| + L \|\left[ \nabla G(\bar{z})-\nabla G(z^k) \right]^T \left( z^{k+1} - \bar{z}\right) \|.
	\end{align*}
	Combining the two parts, we obtain
	\begin{align*}
	\|z^{k+1} - \bar{z}\| \leq &\; \frac{2L}{\gamma} \|G(\bar{z}) - G(z^k) - \nabla G(z^k)^T\,(\bar{z} - z^k) \| \\
	& + \frac{L}{\gamma} \|\left[ \nabla G(\bar{z})-\nabla G(z^k) \right]^T \left( z^{k+1} - \bar{z}\right) \|.
	\end{align*}
	Since $ G(\cdot) \in C^1 $, we have
	$$ \|G(\bar{z}) - G(z^k) - \nabla G(z^k)^T\,(\bar{z} - z^k) \| = \orderof(\|z^{k} - \bar{z}\|). $$
	Given the fact that as $ k\rightarrow \infty$, $(\nabla G(\bar{z})-\nabla G(z^k))\rightarrow 0 $, we have
	$$ \|\left[ \nabla G(\bar{z})-\nabla G(z^k) \right]^T \left( z^{k+1} - \bar{z}\right) \| = \orderof(\|z^{k+1} - \bar{z}\|). $$
	Combining these two results, we obtain that
	$$ \|z^{k+1} - \bar{z}\| = \orderof(\|z^{k} - \bar{z}\|). $$
	Now, consider the \scvx algorithm with trust region constraints. From~\cref{eq:rho_goes_to_1} in the proof of \cref{lem:main}, $ \rho^k \rightarrow 1 $, we have that there exists $ k' $ large enough such that $ \rho^k > \rho_1 $ for all $ k \geq k' $, where $ \rho^k $ is the ratio defined in \cref{eq:ratio}. This means there must be some trust region radius $ r^{k'}>0 $ so that $ r^k \geq r^{k'} $ for all $ k \geq k' $, because we will not shrink the trust region radius from iteration $ k' $ on. On the other hand, from the global convergence result (\cref{thm:global_converge,thm:strong_converge}), we know that as $k$ increases, $ d^k \rightarrow 0 $. Therefore, there must exist a $ k' $, such that
	$$ \|d^k\|<r^{k'}, \quad \forall \, k \geq k', $$
	implying that the trust region constraints will eventually become inactive. Therefore, the above conclusion about the unconstrained problem holds for the trust region constrained case as well.
\end{proof}
With \cref{thm:superlinear}, we have in theory established superlinear convergence rate of the \scvx algorithm for many optimal control problems. Next we will have the claim verified through numerical simulations.

\section{Numerical Results} \label{sec:numerical_results}

In this section, we present a non-convex quad-rotor motion planning example problem to demonstrate the convergence rate of the \scvx algorithm. The problem was posed as a fixed-final-time minimum-fuel optimal control problem, and included non-convexities such as obstacle keep-out zones and nonlinear aerodynamic drag. See \cref{figure1} for a visual illustration of the problem setup.
\begin{figure}[!ht]
	\begin{centering}
		\includegraphics[width=0.9\textwidth]{./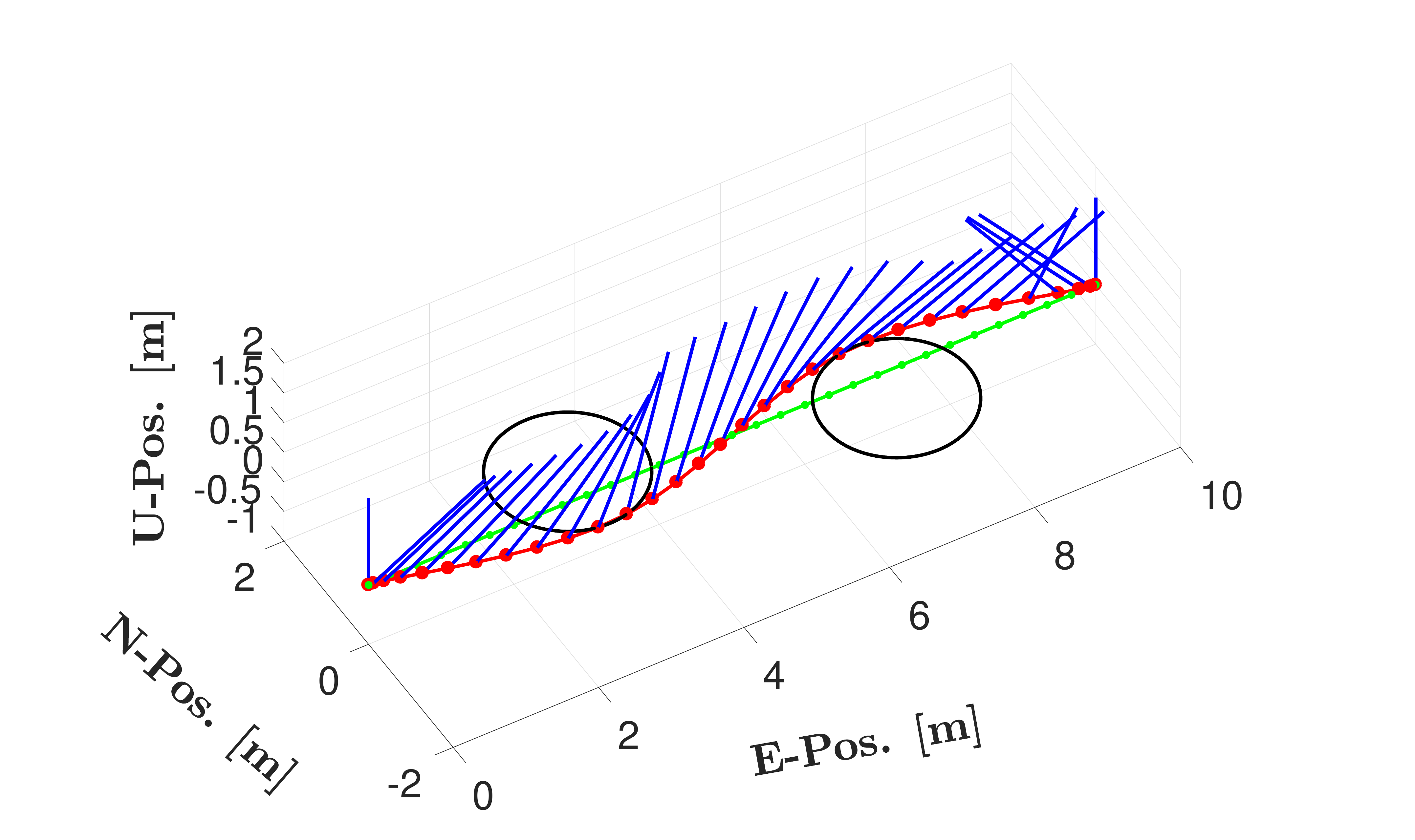}
		\captionsetup{width=0.9\textwidth}
		\caption{\emph{A perspective view of the converged trajectory. The obstacles are represented by the black circles. The red dots and blue lines represent the time discretized positions and thrust vectors, respectively. The green dots represent the initial guess, i.e., a straight line. The motion of the vehicle is from left to right.}}
		\label{figure1}
	\end{centering}
\end{figure}

Our algorithm was implemented in MATLAB using \texttt{CVX}~\cite{cvx} and \texttt{SDPT3}~\cite{sdpt3_pap}. For comparison, we also implemented the problem using SQP solver \texttt{SNOPT}~\cite{snopt} and general purpose IPM solver MATLAB's \texttt{fmincon} function. Specifically, for each method we compared the converged solutions, and the convergence rate attained in computing said solutions.

We intentionally did not compare the computation time of the algorithms, since we lacked sufficient insight into the internal implementation of the underlying algorithms (i.e. \texttt{CVX}, \texttt{SDPT3}, \texttt{SNOPT} and \texttt{fmincon}). However, on numerous occasions, we have implemented the \scvx algorithm using our in-house generic IPM solver \cite{dueri2014automated}, whose customized variant was used in recent autonomous rocket landing experiments \cite{dueri2016customized}. For example, we used the \scvx algorithm in conjunction with our in-house IPM to solve real-time onboard quad-rotor motion planning problems (similar to the example problem presented here) at rates greater than 8 Hz \cite{szmuk2017convexification}.

For tractability, we model the system using three degree-of-freedom translational dynamics, as in \cite{szmuk2017convexification}, and assume that feedback controllers provide sufficiently fast attitude tracking. Unless otherwise stated, assume that the notation defined in this section supersedes the notation defined in~\cref{sec:successive_convexification,sec:global}. The position, velocity, and thrust vectors of the vehicle at time $t$ are given by $\posvec(t)\in\real^3$, $\velvec(t)\in\real^3$, and $\bvec{T}(t)\in\real^3$, respectively. We use $\bvec{g}\in\real^3$ to denote the gravity vector. Additionally, $m\in\real_{++}$ and $k_D\in\real_+$ are used to denote the vehicle's mass and drag constant, respectively. In defining $k_D$, we combine the effects of air density, drag reference area, and coefficient of drag, and assume that all are constant. The continuous-time dynamics of the system are thus given by:
\begin{align*}
\dot{\posvec}(t) &= \velvec(t), \\
\dot{\velvec}(t) &= \frac{1}{m}\bvec{T}(t)-k_D\|\velvec(t)\|_2\velvec(t)+\bvec{g}.
\end{align*}
To implement this problem numerically, we discretize the fixed-final-time problem of duration $t_f$ into $\tf-1$ evenly spaced temporal intervals of duration $\Delta t$. To achieve this, the system is linearized about a trajectory, and then the corresponding discrete-time $A$- and $B$- matrices are computed accordingly. To linearize, we define the state and control vectors as
\begin{align*}
\bvec{x}(t)&\definedas\mat{\posvec^T(t),\thinspace\velvec^T(t)}^T, \\
\bvec{u}(t)&\definedas\bvec{T}(t).
\end{align*}
Using this notation, the original nonlinear dynamics are expressed as
$$\dot{\bvec{x}}(t) = f\big(\bvec{x}(t),\bvec{u}(t)\big), $$
and the linearized system can be written as
$$ \dot{\bvec{x}}(t) = A(t)\bvec{x}(t)+B(t)\bvec{u}(t)+\bvec{z}(t), $$
where
$$ \bvec{z}(t) \definedas f\big(\bvec{x}(t),\bvec{u}(t)\big)-A(t)\bvec{x}(t)-B(t)\bvec{u}(t). $$
For the first succession, we evaluate this linearization about an initialization trajectory, and for all subsequent successions we evaluate it about the previous iteration.

To complete the discretization process, we assume a first-order-hold (direct collocation) on $\bvec{u}(t)$ over $t\in[t_i,t_{i+1}]$ for each $i\in\mathcal{I}^-$, where $\mathcal{I}^- \definedas \{1,2,\ldots,\tf-1\}$. That is we define the control input $\bvec{u}(t)$ as
$$
\left.\begin{aligned}
&\bvec{u}(t) = \beta^{-}(t)\bvec{u}_i + \beta^{+}(t)\bvec{u}_{i+1} \\
&\beta^{-}(t) \definedas (t_{i+1}-t)/\Delta t \\
&\beta^{+}(t) \definedas (t-t_{i})/\Delta t
\end{aligned}\thinspace\right\}\thinspace\thinspace
\begin{aligned}
&t\in[t_i,t_{i+1}], \\
&\forall i\in\mathcal{I}^-.
\end{aligned}
$$
Noting that the state transition matrix (i.e. the discrete-time $A$-matrix) associated with the linearized system is governed by
$$ \dot{\Phi}_A(t) = A(t)\Phi_A(t,t_i), $$
we perform numerical integration to obtain the following matrices for all $i\in\mathcal{I}^-$:
\begin{align*}
A_{d,i} &\definedas \Phi_A(t_{i+1},t_i), \\
B_{d,i}^{-} &\definedas \int_{t_i}^{t_{i+1}}{\beta^{-}(\tau)\Phi_A(t_{i+1},\tau)B(\tau)d\tau}, \\
B_{d,i}^{+} &\definedas \int_{t_i}^{t_{i+1}}{\beta^{+}(\tau)\Phi_A(t_{i+1},\tau)B(\tau)d\tau}, \\
\bvec{z}_{d,i} &\definedas \int_{t_i}^{t_{i+1}}{\Phi_A(t_{i+1},\tau)\bvec{z}(\tau)d\tau}.
\end{align*}
We impose initial and final position, velocity, and thrust constraints, minimum and maximum thrust magnitude constraints, and thrust tilt constraints. The non-convex minimum thrust magnitude constraint can be imposed  by introducing the relaxation variable $\Gamma(t)\in\real$, and applying the lossless convexification technique proposed in \cite{behcet_aut11} and employed in \cite{szmuk2017convexification}. Additionally, to simplify the presentation of the results, we add constraints that artificially restrict the motion of the vehicle to the horizontal plane. However, we emphasize that the algorithms are solving a three-dimensional problem, not a two-dimensional one.

We assume that $n_{obs}$ cylindrical obstacles are present. Defining the sets
\begin{equation*}
\mathcal{I} \definedas \{1,2,\ldots,\tf\}, \quad \mathcal{J} \definedas \{1,2,\ldots,n_{obs}\},
\end{equation*}
the obstacle avoidance constraint for the $j^{th}$ obstacle at the $i^{th}$ time instance is given by
$$
\left.\begin{aligned}
&\|\Delta\posvec_{j,i}\|_2 \geq R_{obs,j} \\
&\Delta\posvec_{j,i} \definedas \posvec_i-\posvec_{obs,j}
\end{aligned}\thinspace\right\}\thinspace\thinspace
\begin{aligned}
\forall i&\in\mathcal{I}, \\
\forall j&\in\mathcal{J}.
\end{aligned}
$$
Since these constraints are non-convex, we linearize them about a trajectory $\bar{\posvec}(t)$, and define $\Delta\bar{\posvec}_{j,i}\definedas \bar{\posvec}_i-\posvec_{obs,j}$.

Additionally, for notational convenience, we define the concatenated solution vector
\begin{equation} \label{eq:def_X}
\bvec{X} \definedas \mat{\bvec{x}_0^T,\ldots,\bvec{x}_{\tf-1}^T,\bvec{u}_0^T,\ldots,\bvec{u}_{\tf-1}^T}^T,
\end{equation}
and use $r^0$ and $ r^k $ to denote the initial and current (i.e. at the $(k+1)^{th}$ iteration) size of the trust region, respectively. As outlined in~\cref{sec:global}, we enforce the trust region on the entire solution vector, $\bvec{X}$.
\boxing{t}{
	\begin{problem}[h] \label{prob:main}
		: Convex Subproblem for the $(k+1)^{th}$ Succession of a Quad-Rotor Motion Planning Problem \\
		$$\underset{\bvec{u}}{\min}\thinspace\thinspace L \quad \textnormal{subject to:}$$
		$$L \definedas \sum_{i\in\mathcal{I}}{\Gamma_i\Delta t}+\lambda\sum_{i\in\mathcal{I}^-}{|\bvec{\nu}_i|}+\lambda\sum_{j\in\mathcal{J}}{\sum_{i\in\mathcal{I}}{\max\{0,\eta_{j,i}\}}}$$
		\begin{align*}
		\posvec_1 &= \posvec_{ic} & \posvec_{\tf} &= \posvec_{fc} \\
		\velvec_1 &= \velvec_{ic} & \velvec_{\tf} &= \velvec_{fc} \\
		\bvec{T}_1 &= \bvec{T}_{ic} & \bvec{T}_{\tf} &= \bvec{T}_{fc}
		\end{align*}
		$$ \bvec{x}_{i+1} = A_{d,i}\bvec{x}_i+B_{d,i}^{-}\bvec{u}_i+B_{d,i}^{+}\bvec{u}_{i+1}+\bvec{z}_{d,i}+\bvec{\nu}_i $$
		$$ R_j-\|\Delta\bar{\posvec}_{j,i}\|_2 - \frac{\Delta\bar{\posvec}_{j,i}}{\|\Delta\bar{\posvec}_{j,i}\|_2}\Delta\posvec_{j,i} \leq \eta_{j,i}$$
		\begin{align*}
		\mat{1\thinspace\thinspace 0 \thinspace\thinspace 0}\posvec_i &= 0 \\ 
		\|\bvec{T}_i\|_2&\leq\Gamma_i \\
		T_{min}&\leq\Gamma_i\leq T_{max} \\
		\cos\theta_{max}\Gamma_i &\leq \mat{1 \thinspace\thinspace 0 \thinspace\thinspace 0}\bvec{T}_i 
		\end{align*}
		$$ \|\bvec{X}\|_1 \leq r^k$$
	\end{problem}
}
A summary of the convex subproblem solved at each succession is provided in~\cref{prob:main}. \cref{table1} provides the \scvx algorithm parameters, the problem parameters, and the boundary conditions used to obtain our results. Note that the parameters and results are presented assuming an Up-East-North reference frame.


\begin{table}[h]
	\caption{\scvx Parameters, Problem Parameters, and B.C.'s}
	\label{table1}
	\begin{center}
		\begin{tabular}{lll}
			\hhline{===}
			Parameter & Value & Units \\
			\hline
			$\lambda$ \hspace{2cm} & $1e^5$ \hspace{2cm} & - \hspace{2cm} \\
			$r^0$ & $1.0$ & - \\
			$\alpha$ & $2.0$ & - \\
			$\beta$ & $3.2$ & - \\
			$\epsilon_{tol}$ & $1e^{-3}$ & - \\
			$\rho_0$ & $0.00$ & - \\
			$\rho_1$ & $0.25$ & - \\
			$\rho_2$ & $0.7$ & - \\
			$\tf$ & $31$ & - \\
			$t_f$ & 3.0 & [s] \\
			$m$ & 0.3 & [kg] \\
			$T_{min}$ & $1.0$ & [N] \\
			$T_{max}$ & $4.0$ & [N] \\
			$\theta_{max}$ & 45 & [$\degree$] \\
			$k_D$ & $0.5$ & - \\
			$\bvec{g}$ & $[-9.81 \thinspace\thinspace 0 \thinspace\thinspace 0]^T$ & [m/s$^2$] \\
			$n_{obs}$ & $2$ & - \\
			$R_{obs,1} $ & $1$ & [m] \\
			$R_{obs,2} $ & $1$ & [m] \\
			$\posvec_{obs,1} $ & $\mat{0 \thinspace\thinspace 3 \thinspace\thinspace 0.45}^T$ & [m] \\
			$\posvec_{obs,2} $ & $\mat{0 \thinspace\thinspace 7 \thinspace -0.45}^T$ & [m] \\
			$\posvec_{ic}$ & $[0\thinspace\thinspace 0 \thinspace\thinspace 0]^T$ & [m] \\
			$\velvec_{ic}$ & $[0\thinspace\thinspace 0.5 \thinspace\thinspace 0]^T$ & [m/s] \\
			$\bvec{T}_{ic}$ & $-mg$ & [N] \\
			$\posvec_{fc}$ & $[0\thinspace\thinspace 10 \thinspace\thinspace 0]^T$ & [m] \\
			$\velvec_{fc}$ & $[0\thinspace\thinspace 0.5 \thinspace\thinspace 0]^T$ & [m/s] \\
			$\bvec{T}_{fc}$ & $-mg$ & [N] \\
			\hhline{===}
		\end{tabular}
	\end{center}
\end{table}
Aside from the problem setup, \cref{figure1} also shows the trajectory corresponding to the converged solution. The obstacles keep-out zones are shown as the black circles, and the optimized path is shown in red. The red dots indicate each of the $\tf$ discretization points, and the blue lines represent the thrust vectors at said time points. The large tilt angles generated by the optimization are necessary to counter the drag force dictated by our choice of $k_D$.

\cref{figure2} shows the positions and velocities of the converged trajectory. The transit time for the trajectory is largely dictated by the east velocity of the vehicle, which in turn is dictated by the prescribed travel distance of 10 [m], and prescribed final time of 3.0 [s].

\cref{figure3} shows the corresponding thrust magnitude and thrust tilt angle. As seen in the figure, the thrust magnitude rides the maximum thrust constraint through most of the trajectory. The constant altitude constraint thus translates to a tilt angle that provides enough vertical thrust to balance out the force of gravity. The trajectory terminates with a quick reversal in thrust direction to bring the vehicle to the prescribed final velocity.

\cref{figure4} shows the differences in position, velocity, and thrust between the \scvx and SQP solutions. Evidently, the solutions are effectively identical.

\cref{figure5} shows a comparison between the convergence rates of the \scvx and SQP algorithms, using the definition of $\bvec{X}$ given in~\cref{eq:def_X}. The same initial guess was used to initialize both methods. This result shows that the \scvx algorithm initially converged at a slower rate than that of the SQP algorithm, but that after a few iterations, the \scvx algorithm attains a far faster convergence rate, which is the typical fashion of superlinear convergence. Specifically, the \scvx algorithm converged to a tolerance of $\Delta L_{tol}$ (see~\cref{table1}) in 11 iterations, whereas the SQP algorithm obtained about the same level of convergence in approximately 16 iterations. It is worthwhile mentioning that the first few iterations of the \scvx convergence process were spent decreasing the virtual control, $\bvec{\nu}_i$, and virtual buffer zone, $\eta_{j,i}$, terms (see~\cref{prob:main}), effectively avoiding artificial infeasibility, and recovering physical feasibility. The remaining iterations refined the physically feasible trajectory to the specified tolerance.

Lastly, \cref{figure6} also compares the convergence rates with the addition of IPM. The same initial guess was used to initialize all three methods. This result shows that general purpose IPM, at least the \texttt{fmincon}'s implementation, performs poorly in solving this problem. It struggles to find a descent direction, and takes a large number of iterations (around 75) to achieve the same level of accuracy.
\begin{figure}[!ht]
	\begin{centering}
		\captionsetup{width=0.8\textwidth}
		\includegraphics[width=0.8\textwidth]{./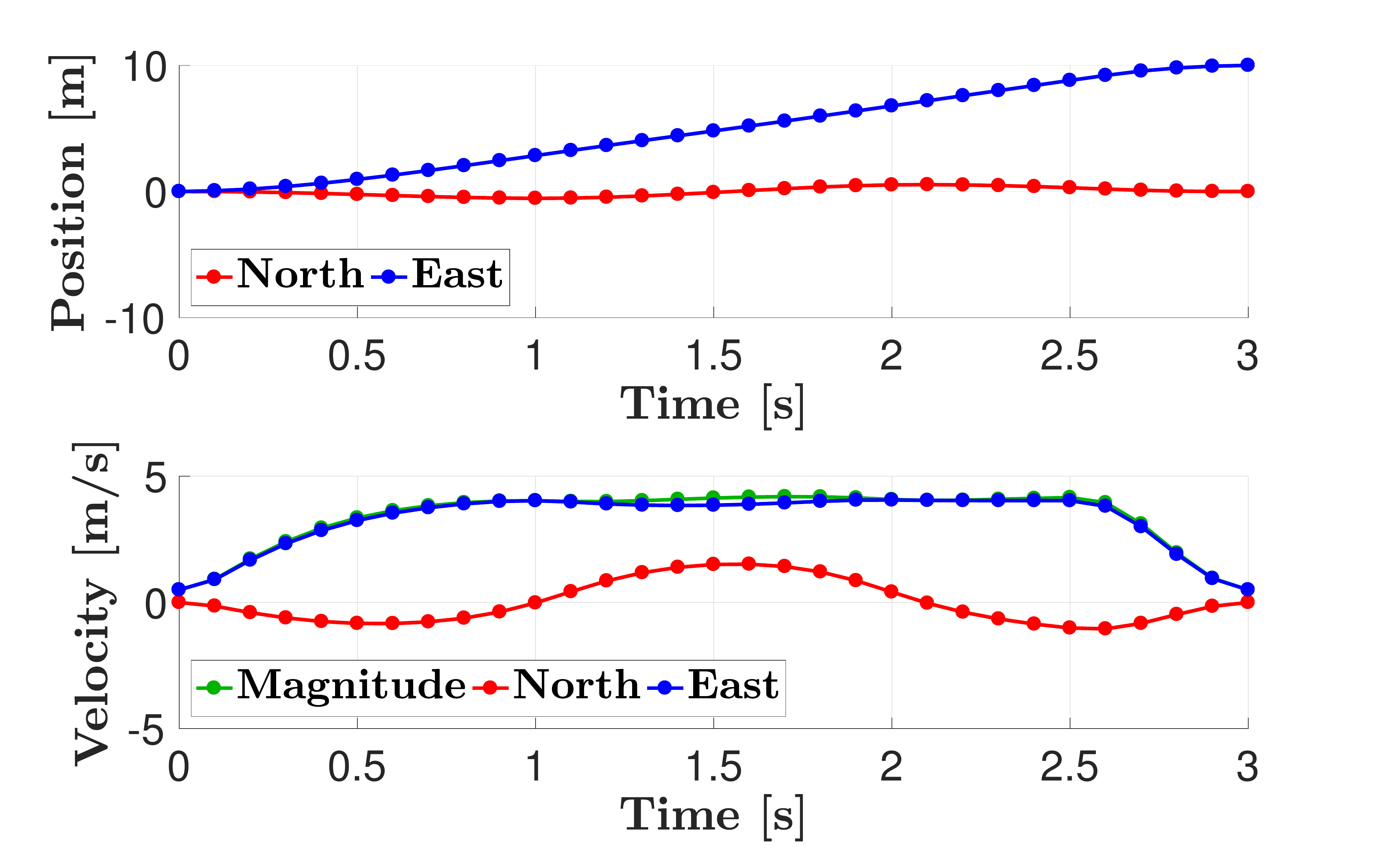}
		\caption{\emph{Positions and velocities of the converged trajectory.}}
		\label{figure2}
		\includegraphics[width=0.8\textwidth]{./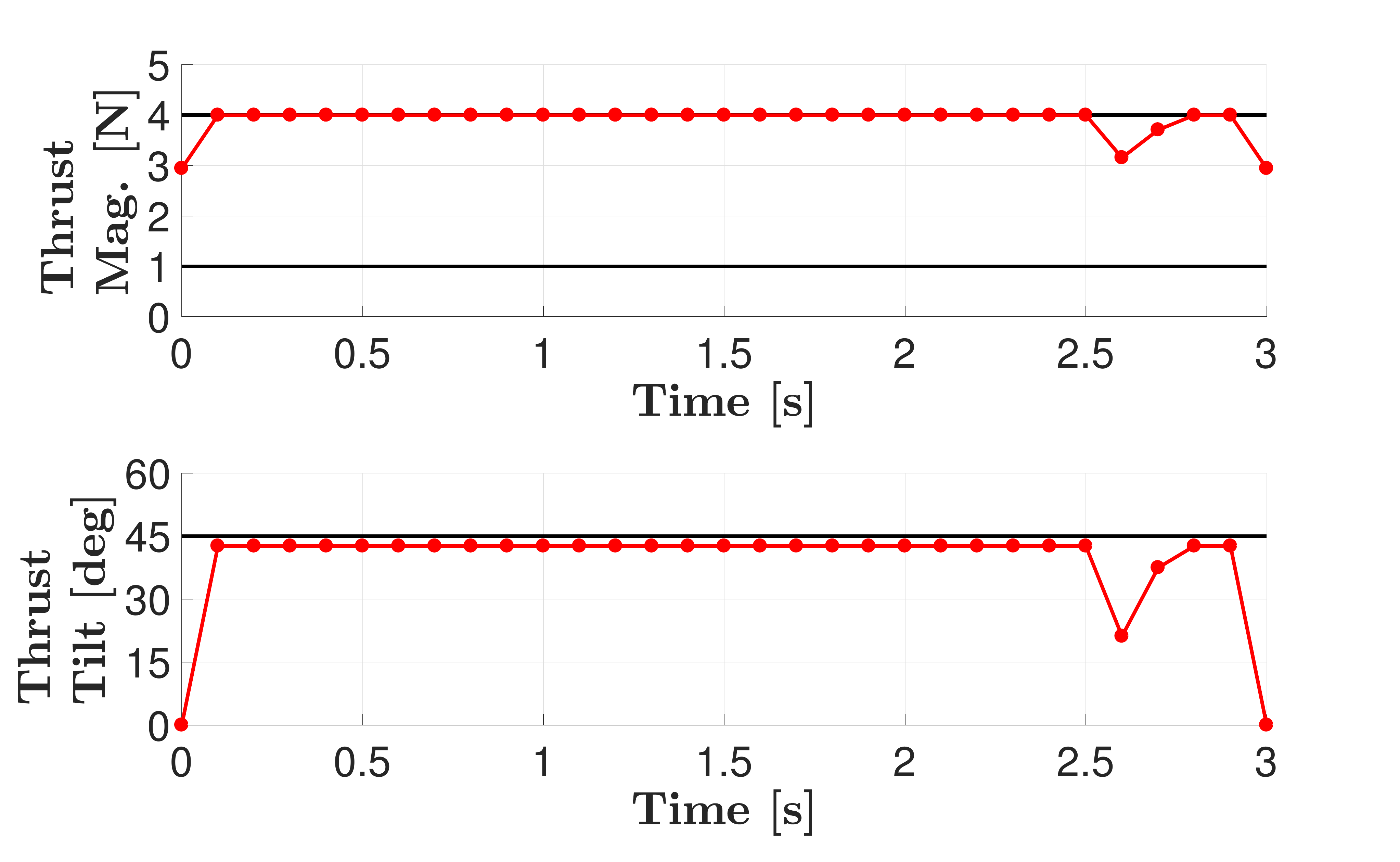}
		\caption{\emph{Thrust magnitude and tilt versus time for the converged trajectory. Black lines show the minimum and maximum thrust bounds in the top plot, and the maximum thrust tilt angle in the bottom plot.}}
		\label{figure3}
		\includegraphics[width=0.8\textwidth]{./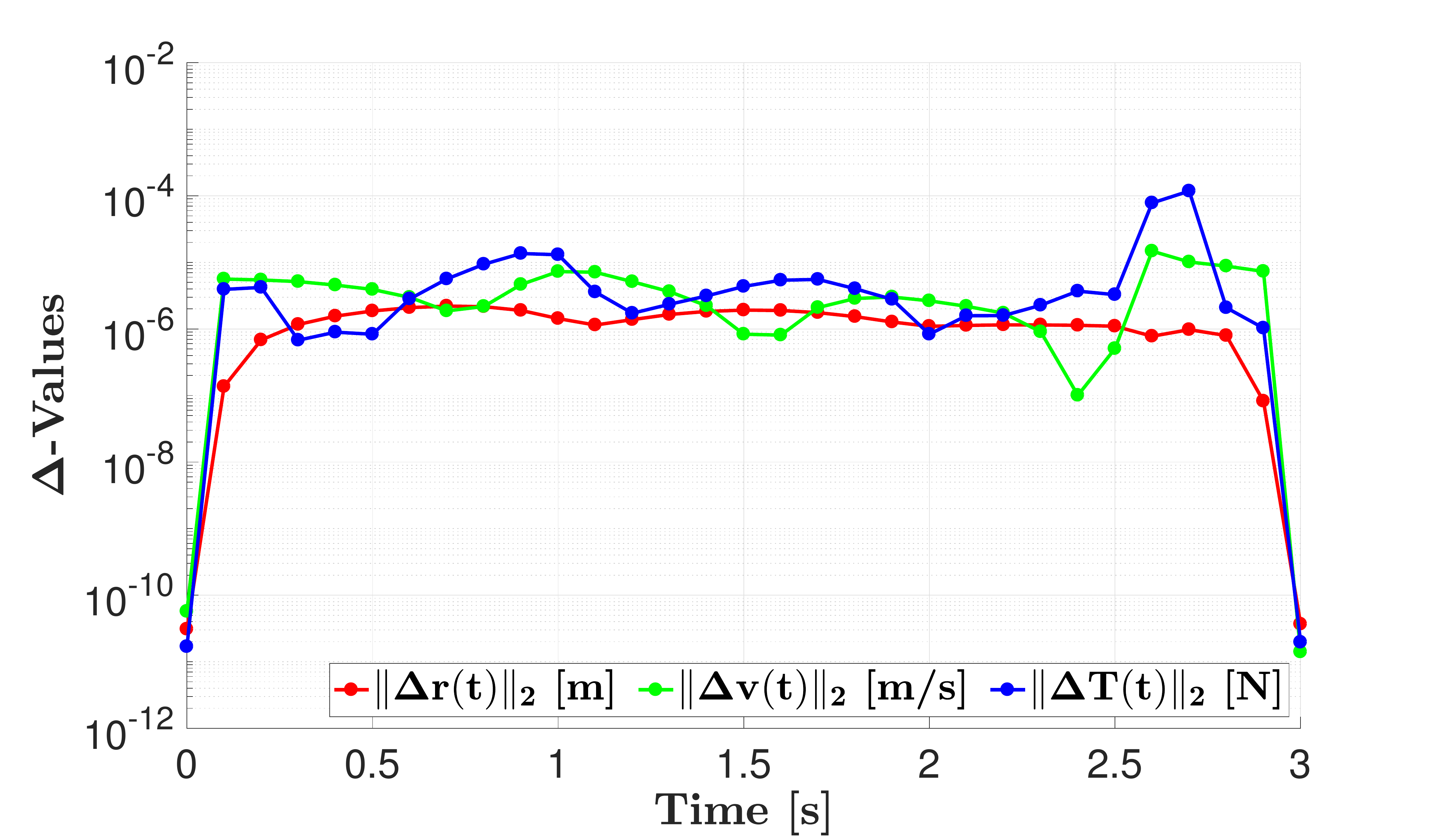}
		\caption{\emph{Position, velocity, and thrust differences between the converged \scvx solution and the converged SQP solution.}}
		\label{figure4}
	\end{centering}
\end{figure}
\begin{figure}[!ht]
	\begin{centering}
		\captionsetup{width=0.8\textwidth}
		\includegraphics[width=0.8\textwidth]{./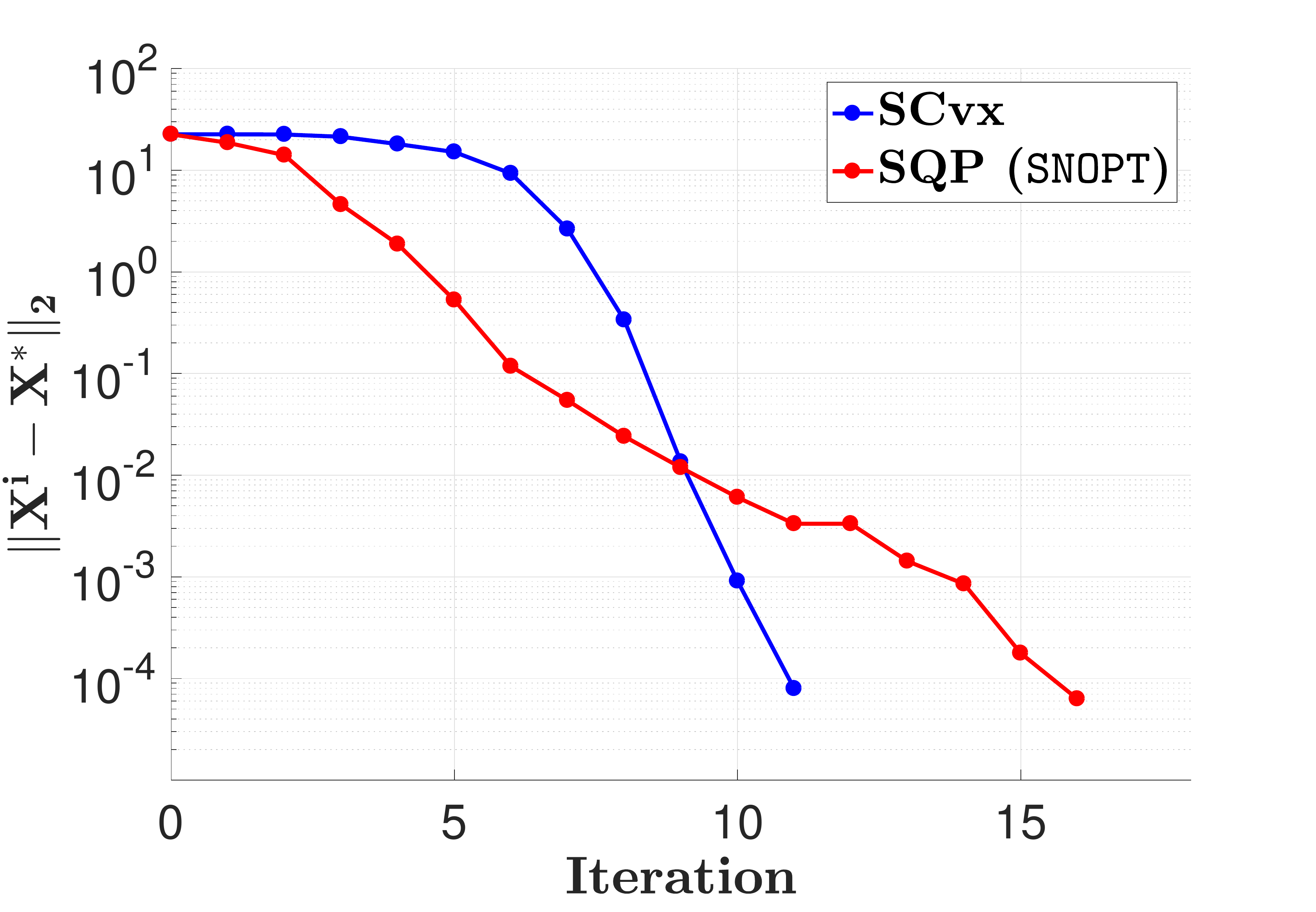}
		\caption{\emph{Convergence history of the \scvx (blue) and SQP (red) algorithms. The lines show the magnitude of the difference between $\bvec{X}$ at each iteration and the converged solution, $\bvec{X}^*$, and their slope indicates the rate of convergence.}}
		\label{figure5}
		\includegraphics[width=0.8\textwidth]{./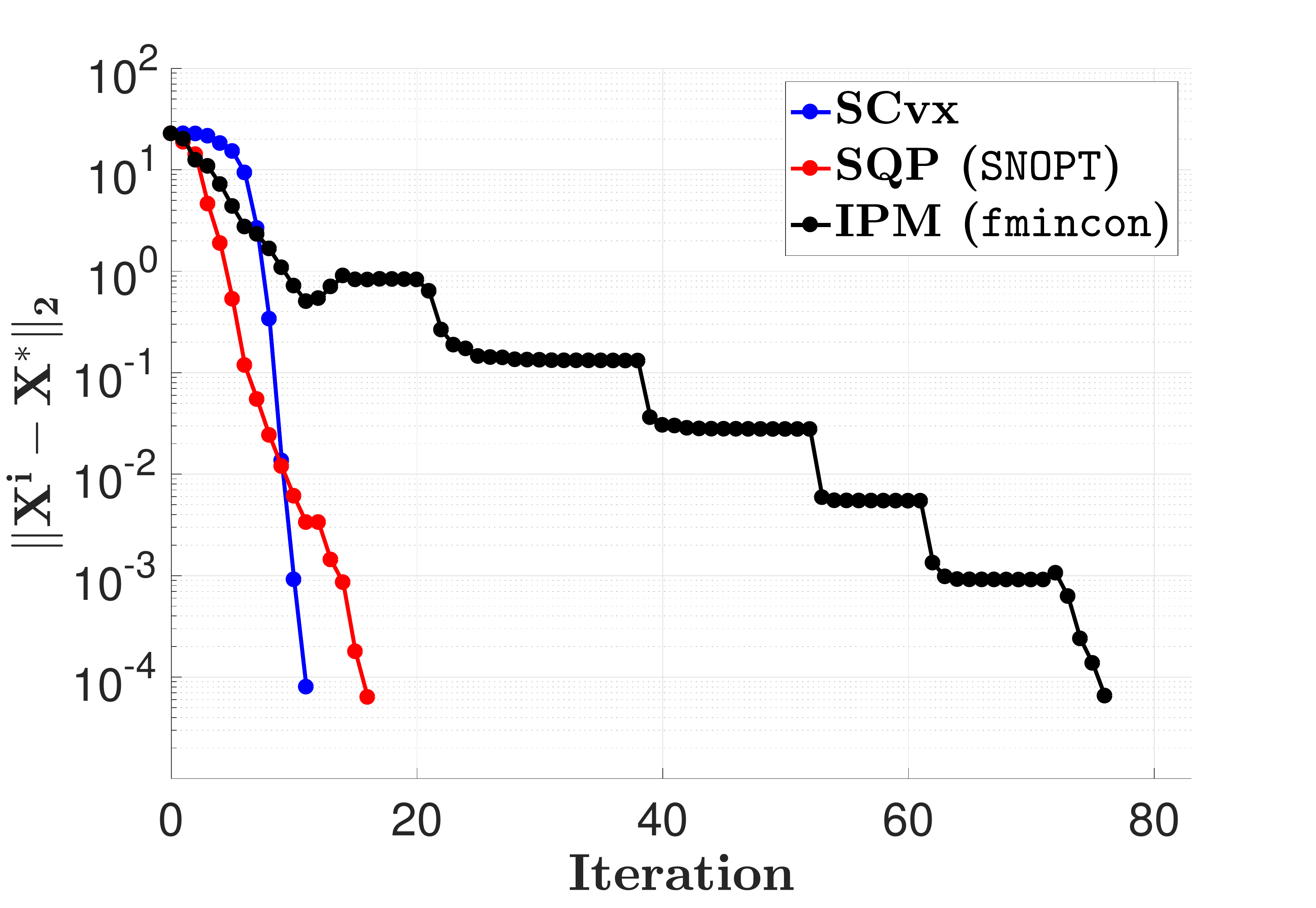}
		\caption{\emph{Convergence history of the \scvx (red), SQP (blue) and IPM (black) algorithms. The lines show the magnitude of the difference between $\bvec{X}$ at each iteration and the converged solution, $\bvec{X}^*$, and their slope indicates the rate of convergence.}}
		\label{figure6}
	\end{centering}
\end{figure}

\section{Conclusion} \label{sec:conclusion}

In this paper, we have proposed an enhanced version of the \scvx algorithm, which is able to solve optimal control problems with nonlinear dynamics and non-convex state and control constraints. The algorithm computes solutions by solving a sequence of convex optimization subproblems, each obtained by linearizing the non-convexities around the solution of the previous iteration. Each subproblem employs virtual controls, virtual buffer zones, and a trust region as safe-guarding mechanisms against artificial infeasibility and artificial unboundedness. Although the linearization acts as an approximation during the convergence process, the converged solution solves the original non-convex optimal control problem exactly and with local optimality.

Further, to set the \scvx algorithm apart from existing SCP-like methods, we have given a thorough analysis of the convergence properties of the \scvx algorithm, providing proofs of both global convergence (weak and strong) and superlinear convergence rate. These theoretical results were further validated by numerical simulations of a non-convex quad-rotor motion planning example problem. Notably, our simulation results showed that while the \scvx algorithm made slower progress early in the convergence process, it ultimately converged to the specified tolerance in less (and in some cases much less) number of iterations than competing solvers.

In summary, the main technical result of this paper is twofold: (1) limit point of the \scvx algorithm is guaranteed to be a local optimum of the original non-convex optimal control problem (assuming it contains zero virtual control and virtual buffer zone terms), and (2) the \scvx algorithm converges superlinearly. Since the solution process is distilled into a sequence of subproblems that are guaranteed to be convex, the \scvx algorithm is well suited for real-time autonomous applications, as demonstrated in \cite{szmuk2017convexification,szmuk2018real}.

\section*{Acknowledgments}
The authors gratefully acknowledge John Hauser of University of Colorado for his valuable insights.

\bibliographystyle{siamplain}
\bibliography{scvx_ref}
\end{document}